\DeclareMathAlphabet{\pazocal}{OMS}{zplm}{m}{n}
\numberwithin{equation}{section}
\newtheorem{Thm}{Theorem}[section]
\newtheorem{Lem}[Thm]{Lemma}
\newtheorem{Prop}[Thm]{Proposition}
\newtheorem{Def}[Thm]{Definition}
\newtheorem{Cond}[Thm]{Condition}
\newcommand{\e}{\varepsilon}
\newcommand{\eps}{\theta}
\newcommand{\epp}{\epsilon}
\renewcommand{\L}{\Lambda}
\newcommand{\N}{\mathbb{N}}
\newcommand{\Z}{\mathbb{Z}}
\renewcommand{\P}{\mathbb{P}}
\newcommand{\R}{\mathbb{R}}
\newcommand{\E}{\mathbb{E}}
\newcommand{\T}{\mathbb{T}}
\renewcommand{\L}{\Lambda}
\newcommand{\Mb}{\pazocal{M}}
\newcommand{\Pa}{\mathcal{P}}
\newcommand{\Qb}{\pazocal{Q}}
\newcommand{\Pam}{\mathcal{R}}
\newcommand{\Pb}{\pazocal{P}}
\newcommand{\id}{\mathbbm{1}}
\newcommand{\mf}[1]{{\mathfrak #1}}
\newcommand{\bb}[1]{{\mathbb #1}}
\newcommand{\dsp}{\displaystyle}
\title[Hydrodynamic limits in random environments] {On Hydrodynamic Limits in Sinai-type random environments}
\author{Claudio Landim}
\address{IMPA, Estrada Dona Castorina 110, J. Botanico, 22460 Rio de Janeiro, Brazil; CNRS UPRES A 6085, Universit\'e de Rouen, 76128 Mont Saint Aignan Cedex, France}
\email{landim@impa.br}
\author{Carlos G. Pacheco}
\address{Department of Mathematics,
CINVESTAV-IPN,
Av. IPN 2508, CP. 07360,
Mexico City, Mexico}
\email{cpacheco@math.cinevistav.mx}
\author{Sunder Sethuraman}
\address{Department of Mathematics, University of Arizona,  Tucson, AZ 85721, USA}
\email{sethuram@math.arizona.edu}
\author{Jianfei Xue}
\address{Department of Mathematics, University of Arizona,  Tucson, AZ 85721,USA}
\email{jxue@math.arizona.edu}
\begin{document}

\begin{abstract}
We investigate the hydrodynamical behavior of a system of random walks with zero-range interactions moving in a common `Sinai-type' random environment on a one dimensional torus.  The hydrodynamic equation found is a quasilinear SPDE with a `rough' random drift term coming from a scaling of the random environment and a homogenization of the particle interaction.  Part of the motivation for this work is to understand how the space-time limit of the particle mass relates to that of the known single particle Brox diffusion limit.    In this respect, given the hydrodynamic limit shown, we describe formal connections through a two scale limit.

\end{abstract}

\subjclass[2020]{60K35; 60K37; 60L50}

 \keywords{Sinai random environment, Brox diffusion, SPDE, interacting particle system, zero-range, hydrodynamic, quasilinear}

\maketitle

%\tableofcontents

\section{Introduction}

The purpose of this article is to understand the `quenched' hydrodynamical behavior of a system of random walks interacting via zero-range dynamics in a common Sinai-type random environment on $\Z$.  Our motivation is two-fold:  On the one hand,
since the single particle scaling limit in a Sinai-type random environment is a Brox diffusion, it is natural to investigate the micro to macro-behaviors in an interacting system of many particles.  On the other hand, although hydrodynamic limits have been studied with respect to a few interacting systems in a common random environment with traps, the limit here is different and of interest, namely a quasilinear SPDE driven in terms of a `rough' noise emerging from the random environment.  

\medskip
{\it `Sinai' random environments.}  A `Sinai' random environment on $\Z$ is a sequence of independent and identically distributed (i.i.d.) random variables $\{u_i\}_{i\in \Z}$, indexed over vertices, with the property that $c\leq u_0\leq 1-c$ for some constant $0<c<1/2$ and $E[\log (u_0/(1-u_0))] = 0$.  Define $\sigma^2= E[(\log (u_0/(1-u_0)))^2]$.  Let $U_n$ be the position of a discrete-time random walk in this random environment (RWRE):  
$$P\big(U_{n+1} = U_n +1| U_n, \{u_i\}\big) = 1-P\big(U_{n+1}=U_n -1|U_n, \{u_i\}\big) = u_{U_n}$$
 for $n\geq 1$ and $U_0=0$. 
When $\sigma^2>0$, Sinai \cite{Sinai} showed that $\sigma^2 U_n/(\log(n))^2$ converges weakly to a non-trivial random variable $U_\infty$, whose law was identified in Kesten \cite{Kesten} and Golosov \cite{Golosov}.  A functional limit theorem to a non-trivial process is problematic however as $\sigma^2 U_{\lfloor nt\rfloor}/(\log(n))^2 \Rightarrow U_\infty$, the limit process here being constant in time $t$.  

However, a continuous analog $X_t$ of the Sinai RWRE on $\R$ was introduced in Brox \cite{Brox}:  Formally, 
$$dX_t = dB_t -\frac{1}{2}W'(X_t)dt$$ and $X_0=0$ or that the generator $\mathcal{L}_{Brox}$ takes form 
$$\mathcal{L}_{Brox} = \frac{1}{2}e^{W(x)}\frac{d}{dx} (e^{-W(x)}\frac{d}{dx}).$$  Here, $B$ is a standard Brownian motion and $W$ is a two-sided Brownian motion on $\R$: $W(0)=0$, $W(x) = \sigma W_+(x)$ for $x>0$, and $W(x)=\sigma W_-(-x)$ for $x<0$, where $W_\pm$ are independent standard Brownian motions.  This description is only short hand, as $W$ is not differentiable a.s.   More carefully, the Brox diffusion is defined in terms of speed and scale measures:  
$$X_t = A^{-1}(B_{T^{-1}(t)}) \ \ {\rm where \ \ }A(y) = \int_0^y e^{W(z)}dz \ \ {\rm  and  \ \ }T(t) = \int_0^t e^{-2W(A^{-1}(B_s))}ds$$ for $y\in \R$ and $t\geq 0$.
Brox \cite{Brox} showed that $X_t/(\log t)^2\Rightarrow U_\infty$, the same limit as for the discrete Sinai RWRE convergence.

To connect the two models, Sinai-type random environments $\{u^{N}_i\}_{i\in \Z}$, in terms of a scaling parameter $N$, were introduced in Seignourel \cite{Seignourel}.  An example that we consider is that $\{u^{N}_i\}$ are i.i.d. over $i$ and $N$, and $u^{N}_i = 1/2 + r_i/\sqrt{N}$ where $\{r_i\}_{i\in \Z}$ is an i.i.d. sequence of random variables which are mean-zero and with finite variance.  Let $U^{N}_n$ be the corresponding RWRE with respect to the scaled enviornment $\{u^{N}_i\}$.  Seignourel \cite{Seignourel} showed that $\{N^{-1} U^{N}_{\lfloor N^2t\rfloor}: t\geq 0\}$ converges weakly to the Brox diffusion $\{X_t: t\geq 0\}$.  See also Andriopolous \cite{And} and Pacheco \cite{Pacheco} for extensions and variations of this convergence.

Of course, both Sinai's random walk and Brox diffusion are well studied objects with current developments.  As a partial list, we refer to books/surveys \cite{Zeitouni1}, \cite{Zeitouni2}, and recent works \cite{And},  \cite{Hu_Le_Mytnik}, \cite{MRS} and references therein for more discussion.

\medskip

{\it Hydrodynamics in random environments.} With respect to systems of many continuous-time random walks in a common random environment of different types, a `quenched' hydrodynamic limit (HDL) for the bulk space time mass density of the walks has been shown in some cases.  When `averaging' is possible with respect to the random environment, deterministic HDL's have been shown in some models.   For exclusion process with random conductances, see Faggionato \cite{F}, \cite{F2}, Jara and Landim \cite{JL}, and Nagy \cite{Nagy}.   For independent random walks in a ballistic random environment, see Peterson \cite{Peterson}.  For symmetric zero-range process, see Gon\c{c}alves and Jara \cite{GJ}.

When the random environment does not allow `averaging', a `quenched' HDL may involve random terms.   In Jara, Landim, Teixera \cite{JLT}, HDL is shown for a system of symmetric independent random walks in a common scaled `trap' environment on a torus, that one of the particles at a site $i$ jumps at rate $\xi^{N}_i$ to a nearest neighbor, where $\xi^{N}_i$ is random and heavy-tailed; here the limit equation involves a heavy-tailed subordinator arising from the random environment.  In Faggionato, Jara, Landim \cite{FJL}, HDL is shown for symmetric simple exclusion processes on a torus, with heavy-tailed random conductances on the bonds, which also involves a heavy-tailed subordinator coming from the random environment.  In Jara and Peterson \cite{JP}, HDL is shown for independent random walks in a random environment on $\Z$, where a single particle is transient but not ballistic, which incorporates a random term arising from the environment.

For a more general discussion on hydrodynamics of stochastic interacting particle systems, we refer as a partial list to books \cite{DP}, \cite{KL} and references therein.

\medskip
{\it Summary of results.} In these contexts, we consider a zero-range interacting particle system of random walks moving in a Sinai-type random environment $\{u^{N}_i = 1/2 + r_i/\sqrt{N}\}$ on a torus $\Z\setminus N\Z$.  Informally, in the zero-range process, a particle at site $i$, with $k$ particles, will jump with rate $g(k)/k$ and move to a neighbor $j=i\pm 1$ with probability $p(i,j)$. We point out the case $g(k)\equiv k$ is when the random walks are all independent.  When a random environment is imposed, $p(i,j) = u^{N}_i$ when $j=i+1$ and $= 1-u^{N}_i$ when $j=i-1$.  Since a Sinai random walker experiences many traps and moves slowly, one expects that the hydrodynamic limit to involve `drift' terms reflecting the environment.  

In principle, since infinitesimally the number of particles $\eta_i(t)$ at site $i$ at time $t$ varies according the generator action,
\begin{align*}
&L\eta_i(t) = \frac{1}{2}\big\{ g(\eta_{i+1}(t)) 
+ g(\eta_{i-1}(t)) - 2g(\eta_i(t))\big\} \\
&\ \ \ \ \ \ \ \ \ \ \ \ \ + \big\{g(\eta_{i-1}(t))(r_{i-1}/\sqrt{N}) - g(\eta_{i+1}(t))(r_{i+1}/\sqrt{N})\big\}dt,
\end{align*}
and $r_i/\sqrt{N} = [S(i) - S(i-1)]/\sqrt{N}$ where $S(\cdot)$ is the partial sum of $\{r_i\}$,
 one might expect the equation, scaling space and time diffusively, as
\begin{equation}
\label{expected_HDL}
\partial_t \rho = \frac{1}{2}\Delta \Phi(\rho) - 2 \nabla \big(W'\Phi(\rho)\big).
\end{equation}
Here, $\Phi$ is a homogenized version of the jump rate $g$.
Of course, since $W$ is not differentiable, the spatial white noise $W'$ has to be interpreted as a distribution, and the equation is ill-posed.  But, in a sense, an equation like this should represent the hydrodynamic equation, the term $W'$ reflecting the random environment, even after some averaging in the scaling limit has been taken.

In this article, as a way to obtain formally equation \eqref{expected_HDL}, we introduce a two-scale approach:  We will consider more regular random environments, those which average $\{u^{N}_i\}$ in small macroscopic blocks.  Namely, we consider $\{\bar u^{N}_i = 1/2 + q_i^N/\sqrt{N}\}$ where $q_i^N = (2\varepsilon N + 1)^{-1}\sum_{|j-i|\leq \varepsilon N} r_j$ and $\varepsilon>0$ is fixed.  Then, under a fixed random environment given by $\{\bar u^{N}_i\}$, that is in a `quenched' environment, we obtain in Theorem \ref{main thm} the hydrodynamic equation
\begin{equation}
\label{main_HDL}
\partial_t \rho^{(\varepsilon)}(t,x) = \frac{1}{2}\Delta \Phi \big(\rho^{(\varepsilon)}(t,x)) - 2 \nabla \big( \frac{W(x+\varepsilon)-W(x-\varepsilon)}{2\varepsilon}\Phi(\rho^{(\varepsilon)}(t,x))\big).
\end{equation}
We remark, in the $\varepsilon\downarrow 0$ limit of \eqref{main_HDL}, formally, one sees that some form of \eqref{expected_HDL} should emerge. 
Indeed, in Funaki et.~al.~\cite{FHSX}, the behavior of the limit $\rho^{(\varepsilon)}$ as $\varepsilon \downarrow 0$ is considered carefully and shown to solve \eqref{expected_HDL} in the `paracontrolled' sense.  See \cite{FHSX} for a general discussion about equations \eqref{expected_HDL} and \eqref{main_HDL} in connection to literature.

The proof of Theorem \ref{main thm} broadly employs the `entropy' method of Guo-Papanicoloau-Varadhan (GPV) (cf. Kipnis and Landim \cite{KL}), by applying an Ito's formula to the empirical measure of particle mass with respect to the zero-range evolution.  However, the random environment is not homogeneous and in particular is not translation-invariant or smooth, a key feature of the `GPV' technique to homogenize resulting nonlinear terms of the process in a replacement scheme.  The main technical work is to introduce `local' averages to piece together the `global' average in the hydrodynamic limit.  

To derive the homogenization, we make use of some averaging in time, afforded by spectral gap or mixing estimates of localized processes, to perform `local' $1$ and $2$-block replacements, leading to more `global' replacements.  These `local' replacements are not so broadly known, although related notions were used in Jara, Landim, and Sethuraman \cite{JLS}, \cite{JLS2}, and Fatkullin, Sethuraman, Xue \cite{FSX} to analyze tagged particle motion and Young diagram evolutions.  However, differently, in our context,
estimates on the random environment play a significant role in making the `local' replacements work.  We remark that in this work we do not assume the process rate $g$ is an increasing function, as it is in \cite{JLS2}, \cite{FSX}, that is that the process is `attractive', a technical condition which would allow use of `basic' particle couplings.

The limiting equation \eqref{main_HDL} is not a standard one as the factor $W'_\varepsilon:=(W(x+\varepsilon) -W(x-\varepsilon))/(2\varepsilon)$ is not smooth but in class $C^{1/2-\epsilon}$ for all $0<\epsilon<1/2$.  To finish the proof of the hydrodynamic limit in Theorem \ref{main thm}, we need to show uniqueness of weak solutions of \eqref{main_HDL}.  We can derive a certain continuum energy estimate by considering the microscopic particle system, namely that $\partial_x \Phi(\rho(t,x))$ can be defined in a weak sense.  Uniqueness of weak solutions to \eqref{main_HDL} in an appropriate class is then shown by a self-contained argument, which might be of separate interest.

Finally, to connect back to Brox diffusion, one might follow a tagged particle in this system.  Let $x^{N}(t)$ be the position at time $t$ of a tagged particle initially at the origin.  Then, from Ito's formula, 
$$x^{N}(t) = \frac{2}{\sqrt{N}}\int_0^t \bar u^{N}_{x^{N}(s)} \frac{g(\eta_s(x^{N}(s)))}{\eta_s(x^{N}(s))}ds + M^{N}(t).$$ 
One can compute that the quadratic variation of $M^{N}(t) = \int_0^t\frac{g(\eta_s(x^{N}(s)))}{\eta_s(x^{N}(s))}ds$.  Formally, given the hydrodynamic limit \eqref{main_HDL} and following the scheme in \cite{JLS},
one might expect limit points $v_t$ of $v^{N}_t = x^{N}_{N^2t}/N$ to satisfy the equation 
$$v_t = 2\int_0^t W'_\varepsilon(v_s) \frac{\Phi(\rho(^{(\varepsilon)}(s,v_s)))}{\rho^{(\varepsilon)}(s,v_s)}ds + M(t),$$
 where $M$ is a time-changed Brownian motion.  When the zero-range process consists of independent motions, that is when $g(k)\equiv k$ and so $\Phi(u)\equiv u$, one recovers a form of Brox diffusion in terms of $W'_\varepsilon$ instead of white noise $W'$.  We leave to a later work a rigorous study of these considerations.

\section{Model description}
We first introduce the random environments considered, and then the zero-range process in this random environment.
Let $\left\{ r_x  \right\}_{x\in\N}$ be a sequence of i.i.d.\,random variables with mean $0$ and variance $\sigma^2 <\infty$.

Let $s_0 = 0$ and for $n\geq 1$, $s_n = \sum_{k=1}^n r_k $. For
$0\leq u \leq 1$, let
\begin{equation}
\label {eqn: X^N}
X^N_u =
 \dfrac{1}{\sigma \sqrt N} 
 s_{\lfloor Nu \rfloor}
+
\dfrac{Nu - \lfloor Nu \rfloor} {\sigma \sqrt N} r_{\lfloor Nu \rfloor
  + 1}\;, 
\end{equation}
where $\lfloor a \rfloor$, $a\in \R$, stands for the integer part
of $a$.  It is standard that $\left\{X^N_u: 0\leq u\leq 1\right\}$, as
a random function on $[0,1]$, converges in distribution to the
Brownian motion on $[0,1]$.  By Skorokhod's Representation Theorem, we
may find a probability space $(\Omega, \Pa)$ and
$\left\{W^N_u: 0\leq u\leq 1\right\}$, $N\in\N$, mappings from
$\Omega$ to $C[0,1]$, such that, for all $N\in\N$,
$$\left\{X^N_u: 0\leq u\leq 1\right\}=\left\{W^N_u: 0\leq u\leq
1\right\}$$ 
in distribution and moreover,
$\left\{W^N_u: 0\leq u\leq 1\right\}$ converges almost surely to the
standard Brownian motion $\left\{W_u, 0\leq u\leq 1\right\}$.

\medskip
\noindent {\it Quenched formulation.} We will now fix throughout the paper an $\omega\in \Omega$ such that 

\noindent $\left\{W^N_u(\omega): 0\leq u\leq 1\right\}$ converges (uniformly) to a Brownian path $\left\{W_u(\omega): 0\leq u\leq 1\right\}$. 

\medskip

Define the discrete torus $\T_N:=\Z\setminus N\Z$ for $N\in\N$.
Throughout this article, we will identify $\T_N$ with $\{1,2,\ldots,N\}$
and also identify the unit torus $\T$ with $(0,1]$. Fix any $\e$ such that $0<\e<1$.

 It will be convenient to extend $W_t^N$ as well as $W_t$ to $t\in[-1,2]$:  With $\tilde W_u$ representing either $W^N_u$ or $W_u$, we have
\[
\tilde W_u =
\begin{cases}
\tilde W_{u+1} - \tilde W_1& u\in [-1,0),\\
\tilde W_{u-1} + \tilde W_1& u\in (1,2].
\end{cases}
\]

Let $\varepsilon>0$ be a parameter, fixed throughout the paper. For
each $N\in \N$ and $k\in \T_N$, define an $\e$-average of local
environments:
\begin{equation}
\label{eqn: q and W}
q_k^N := \frac{\sqrt{N}}{2\varepsilon N +1} \big(W^N_{\frac{k+\lfloor \varepsilon N\rfloor}{N}} - W^N_{\frac{k-\lfloor \varepsilon N\rfloor}{N}}\big) \stackrel{d}{=} \dfrac1{(2 \e N +1)\sigma} \sum_{|j-k|\leq \e N} r_j. 
\end{equation}
In particular, when $k/N\rightarrow x\in \T$ as $N\uparrow\infty$, we have
\begin{equation}
\label{q_conv}
\sqrt{N} \, q_k^N \rightarrow \frac{1}{2\varepsilon}
\big(W_{x+\varepsilon} - W_{x-\varepsilon}\big).
\end{equation}

The following estimate, afforded by the uniform convergence of
$W^N_\cdot$ to $W_\cdot$, will be useful.

\begin{Lem} \label {lem: qx bound}
There exists constant $C=C(\omega)<\infty$ such that 
\begin{equation} \label {eqn: q bound}
\max_{1\leq k\leq N} \left\{\sqrt N \, |q_k^N| \right\} \leq C\;.
\end{equation}
\end{Lem}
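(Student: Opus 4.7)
The plan is to reduce the bound on $\sqrt{N}\,|q_k^N|$ to a statement about the uniform boundedness of the family $\{W^N_\cdot\}_{N \in \N}$ on the interval $[-1,2]$, which is free from the uniform convergence $W^N_\cdot \to W_\cdot$ that has been fixed in the quenched setting.

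First, I would rewrite the definition \eqref{eqn: q and W} so that the $\sqrt{N}$ prefactor is absorbed:
\[
\sqrt{N}\, q_k^N \;=\; \frac{N}{2\varepsilon N + 1}\,\bigl(W^N_{(k+\lfloor \varepsilon N\rfloor)/N} - W^N_{(k-\lfloor \varepsilon N\rfloor)/N}\bigr).
\]
The deterministic prefactor satisfies $N/(2\varepsilon N + 1) \le 1/(2\varepsilon)$ uniformly in $N$. So the whole task reduces to bounding the increments of $W^N$ at scale $\varepsilon$, uniformly over $k\in\T_N$ and $N\in\N$. Note that the relevant arguments $(k\pm\lfloor\varepsilon N\rfloor)/N$ lie in $[-1,2]$, which is precisely the interval on which $W^N$ has been extended.

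Next, since by the choice of $\omega$ the sequence $\{W^N_\cdot(\omega)\}$ converges uniformly on $[0,1]$ to the continuous path $W_\cdot(\omega)$, the extension formulas
\[
\tilde W_u = \tilde W_{u+1}-\tilde W_1\ (u \in [-1,0)),\quad \tilde W_u = \tilde W_{u-1}+\tilde W_1\ (u \in (1,2]),
\]
preserve uniform convergence, so $W^N_\cdot(\omega) \to W_\cdot(\omega)$ uniformly on $[-1,2]$. In particular
\[
M(\omega) \;:=\; \sup_{N\in\N}\;\sup_{u\in[-1,2]} |W^N_u(\omega)| \;<\; \infty,
\]
because the sup over $N$ of a uniformly convergent sequence of continuous functions on a compact set is bounded by the limit's sup plus the tail's uniform distance.

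Finally, combining these two observations gives
\[
\max_{1\le k\le N}\sqrt{N}\,|q_k^N| \;\le\; \frac{1}{2\varepsilon}\cdot 2M(\omega) \;=\; \frac{M(\omega)}{\varepsilon},
\]
which is the desired bound with $C(\omega) = M(\omega)/\varepsilon$. There is no real obstacle here; the only point requiring care is the passage from uniform convergence on $[0,1]$ to uniform convergence on the extended interval $[-1,2]$, but this is immediate from the periodic extension formula above since $W^N_1 \to W_1$.
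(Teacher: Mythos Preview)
Your proof is correct and follows the same approach as the paper's own argument: rewrite $\sqrt{N}\,q_k^N$ as a bounded prefactor times an increment of $W^N$, then invoke the uniform convergence $W^N_\cdot \to W_\cdot$ (extended periodically to $[-1,2]$) together with the continuity of $W$ to bound those increments uniformly. The paper states this in one line, while you have spelled out the details (the prefactor bound $N/(2\varepsilon N+1)\le 1/(2\varepsilon)$, the extension to $[-1,2]$, and the explicit constant $M(\omega)/\varepsilon$), but the content is the same.
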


\begin{proof}
Notice that
$ \sqrt N |q_k^N| = \dfrac{N}{2 \e N +1} \big| W_{(k+\e N)/N}^N -
W_{(k-\e N-1)/N}^N \big| $.  The lemma follows from the uniform
convergence of $W_\cdot^N \to W_\cdot$ and the continuity of $W$.
\end{proof}

We now introduce the zero-range process in the random environment
$\{q_k^N : k\in \T_N\}$.  Set $\N_0 = \{0,1,2,\ldots\}$, and let
$\Sigma_N = \N_0^{\T_N}$ be the configuration space. Elements of
$\Sigma_N$ are represented by the Greek letter $\xi$. Thus, $\xi(k)$,
$k\in \T_N$, stands for the number of particles at site $k$ for the
configuration $\xi$.

Fix a function $g:\N_0 \to \R_+$.  Denote by $(\xi_t : t\ge 0)$ the
continuous-time Markov chain whose evolution can be informally
described as follows. Take $N$ sufficiently large for
$|\, q_k^N\,|/\sqrt N <1/2$ for all $k\in\T_N$. At rate
$g(\xi(k)) \,\{\, (1/2) \pm (q_k^N/\sqrt N)\,\}$ a particle jumps from
$k$ to $k \pm 1$.

More precisely, the process $\{\xi_t: t\geq 0\}$ is the Markov process
with generator $L$ given by
\begin{equation}
\label{eqn: generator L}
\begin{split}
Lf(\xi) =
\sum_{k=1}^N 
\Big\{
g(\xi(k)) \Big( \dfrac12 + \dfrac{q_k^N}{\sqrt N}\Big)
&\big(f(\xi^{k,k+1}) - f(\xi) \big)\\
+&
g(\xi(k)) \Big( \dfrac12 - \dfrac{q_k^N}{\sqrt N}\Big)
\big(f(\xi^{k,k-1}) - f(\xi) \big)
\Big\}.
\end{split}
\end{equation}
Here, $\xi^{j,k}$ is the configuration obtained from $\xi$ by moving a
particle from $j$ to $k$, that is,
\begin{equation*}
\xi^{j,k} (\ell) \;=\;
\begin{cases}
\xi(j) -1 & \ell=j\;, \\
\xi(k)+1 & \ell=k \;, \\
\xi(\ell) & \ell\not = j\,, k \;.
\end{cases}
\end{equation*}

To avoid degeneracies, we suppose that $g(0)=0$ and $g(k)>0$ for
$k\geq 1$.  We assume, further, that $g$ satisfies
\begin{enumerate}
\item $\sup_{k\in \N}|g(k+1) - g(k)| \leq g^* <\infty$;
\item There exists $k_0\in \N$ and $c_1>0$ such that $g(k)-g(j)\geq c_1$ for all $k\geq j+k_0$.
\end{enumerate}
These properties guarantee that the zero-range process has good mixing
properties, useful in the proof of the `Replacement Lemma', stated in
Lemma \ref{replacement_lemma}.  This specification also implies that
$g_* k\leq g(k)\leq g^* k$ for some $g_*>0$, more than enough to
satisfy the technical condition `FEM' (cf. \cite{KL}[p.69]) used for
particle truncation in the $1$ and $2$-blocks estimates presented in
Lemmas \ref{lem: 1 block} and \ref{lem: 2 blocks}.

\subsection{Invariant measure} \label{subsec: invariant measure} The
building blocks for the invariant measures are $\{\Pb_{\phi}\}$, a
family of Poisson-like distributions indexed by $\phi\geq 0$
(sometimes referred as fugacity).  For each $\phi$, $\Pb_{\phi}$ is
defined by
\[
\Pb_{\phi}(n) =
\dfrac{1}{Z(\phi)} \dfrac{\phi^n} {g(n) !},
\quad \text{for } n\geq 0.
\]
Here, $g(0)! :=1$ and $g(n)! :=\prod_{j=1}^n g(j)$ for $n\geq 1$;
$Z(\cdot)$ is the partition function:
\[
Z(\phi) := \sum_{n=0}^\infty  \dfrac{\phi^n} {g(n) !}.
\]
Since $g(k) \ge g_* \, k$, $Z(\phi) <\infty$ for all $0\le \phi <\infty$.

Let $R(\phi) = E_{\Pb_\phi} [ X ]$ be the mean of the distribution
$\Pb_\phi$. A direct computation yields that $R'(\phi)>0$, $R(0)=0$,
$\lim_{\phi\to\infty} R(\phi) = \infty$. Since $R$ is strictly
increasing, it has an inverse, denoted by $\Phi: \R_+ \to \R_+$, and
we may parametrize the family of distributions $\Pb_{\phi}$ by its
mean. For $\rho\ge 0$, let $\Qb_\rho = \Pb_{\Phi(\rho)}$, so that
$E_{\Qb_\rho} [ X ] = E_{\Pb_{\Phi(\rho)}} [ X ] = R(\Phi(\rho)) = \rho$.

A straightforward computation yields that
$E_{\Pb_\phi} [ g(X) ] = \phi$, $\phi\ge 0$. Thus,
\begin{equation}
\label{Phi_eqn}
\Phi(\rho) = E_{\Pb_{\Phi(\rho)}} [ g(X) ]
= E_{\Qb_{\rho}} [ g(X) ]\ \;, \quad \rho\,\ge\, 0\;.
\end{equation}
As $g_* k \leq g(k) \leq g^*k$, we have that
$g_*\rho\leq \Phi(\rho) \leq g^*\rho$. On the other hand, a simple
computation yields that $\Phi'(\rho) = \Phi(\rho)/\sigma^2(\rho)$
where $\sigma^2(\rho)$ is the variance of $X$ under $\Qb_\rho$.
Moreover, under our assumptions on $g$, there exist constants
$0<C_1<C_2<\infty$ such that $0<C_1\leq \Phi'(\rho)\leq C_2<\infty$
for all $\rho\ge 0$ (cf. \cite{LSV}[equation (5.2)]). In particular,
$\Phi$ is a strictly increasing function of $\rho$.

Fix a vector $( \phi_{k,N} : k\in\T_N)$ of non-negative real numbers.
Denote by $\Pam_N$ the product measure on $\N_0^{\T_N}$ whose
marginals are given by
\[
\Pam_N (\xi(k) = n) = \Pb_{\phi_{k,N}}(n),
\quad \text{for } k\in \T_N, n\geq 0.
\]
It is standard (cf.~\cite{A}) to check that $\Pam_N$ is invariant with
respect to the generator $L$ in \eqref{eqn: generator L} as long as
the fugacities $\{\phi_{k,N}\}_{k\in \T_N}$ satisfy:
\begin{equation} \label {eqn: invariant phi}
\Big( \dfrac12 + \dfrac{q_{k-1}^N}{\sqrt N}\Big)\phi_{k-1,N}
+
\Big( \dfrac12 - \dfrac{q_{k+1}^N}{\sqrt N}\Big)\phi_{k+1,N}
=
\phi_{k,N},
\quad
k=1,2,\ldots,N.
\end{equation}

Notice that $\{c \phi_{k,N}\}_{k\in \T_N}$, $c\in \R$, is a solution
of \eqref{eqn: invariant phi} if $\{\phi_{k,N}\}_{k\in \T_N}$ is a solution.
In particular, any solution gives rise to a one-parameter family of
solutions.

\begin{Lem}
\label{l01}
The equation \eqref {eqn: invariant phi} admits a solution,
unique up to a multiplicative constant. Moreover, the solution is
either strictly positive or strictly negative or identically equal to
$0$. 
\end{Lem}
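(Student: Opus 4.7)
The plan is to view \eqref{eqn: invariant phi} as the stationarity equation for a discrete-time Markov chain on the torus $\T_N$. With the abbreviations $a_k := 1/2 + q_k^N/\sqrt N$ and $b_k := 1/2 - q_k^N/\sqrt N$, the equation reads $\phi_k = a_{k-1}\phi_{k-1} + b_{k+1}\phi_{k+1}$ for $k\in\T_N$ (indices mod $N$), which is exactly the relation $P^{T}\phi = \phi$ for the transition matrix $P$ of the nearest-neighbor random walk on $\T_N$ with $P_{k,k+1} = a_k$ and $P_{k,k-1} = b_k$. By Lemma \ref{lem: qx bound}, for all $N$ sufficiently large one has $|q_k^N|/\sqrt N < 1/2$ uniformly in $k$, so $a_k, b_k \in (0,1)$ uniformly, and $P$ is an irreducible stochastic matrix on the finite state space $\T_N$ (any site can be reached from any other by nearest-neighbor steps along the cycle).

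The conclusion then follows from the Perron--Frobenius theorem applied to $P$: an irreducible stochastic matrix on a finite state space has its eigenvalue $1$ simple, with the corresponding left eigenspace one-dimensional and spanned by a strictly positive vector---the unique invariant probability distribution $\pi$ of the chain. Therefore every solution $\phi$ of \eqref{eqn: invariant phi} has the form $\phi = c\,\pi$ for some $c\in\R$, which gives both the asserted uniqueness up to a multiplicative constant and the trichotomy strictly positive / strictly negative / identically zero according to the sign of $c$.

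The only real content is the invocation of Perron--Frobenius, for which irreducibility is supplied by the uniform positivity of $a_k$ and $b_k$ from Lemma \ref{lem: qx bound}; no step should present a genuine obstacle. For a more self-contained derivation, one can instead rewrite \eqref{eqn: invariant phi} by subtracting $(a_k + b_k)\phi_k = \phi_k$ to discover that the discrete current $J_k := a_k\phi_k - b_{k+1}\phi_{k+1}$ is independent of $k$, i.e., $J_k \equiv J$. The resulting first-order linear recursion $\phi_{k+1} = (a_k/b_{k+1})\phi_k - J/b_{k+1}$, together with the periodic closure $\phi_{N+1} = \phi_1$, pins down the ratio $J/\phi_1$ and leaves $\phi_1$ as the single free parameter, thereby confirming directly that the solution space is one-dimensional; the strictly positive representative can then be taken to be $\pi$ from the Markov chain argument above.
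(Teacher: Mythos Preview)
Your proof is correct and is in fact cleaner than the paper's, but it takes a genuinely different route. You recognize \eqref{eqn: invariant phi} as the left-eigenvector equation $P^{T}\phi=\phi$ for an irreducible stochastic matrix on $\T_N$ and invoke Perron--Frobenius to conclude in one stroke that the solution space is one-dimensional and spanned by a strictly positive vector. The paper instead carries out by hand the explicit computation you sketch at the end: it writes the equation as a constant-current relation, solves the resulting first-order recursion in closed form (obtaining the formula \eqref{01} for $\phi_{k,N}$ in terms of $\phi_{1,N}$ via the products $\mf R_{j,k}$, $\mf L_{j,k}$ and the sums $\mf S_k$), and then checks positivity directly from this formula.

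The trade-off is that the paper's more laborious route produces explicit expressions---in particular \eqref{01}, \eqref{03}, and \eqref{04}---that are immediately reused in the proof of Lemma~\ref{lem: uniform bounds on phi max min} to establish the quantitative bounds $\phi_{\max,N}/\phi_{\min,N}\le C_1$ and $\max_k|\phi_{k,N}-\phi_{k+1,N}|\le C_2 N^{-1}\phi_{\max,N}$. Your Perron--Frobenius argument, while settling Lemma~\ref{l01} itself more elegantly, does not supply these formulas, so if you adopt it you would still need to set up the recursion machinery (or an equivalent) when you reach Lemma~\ref{lem: uniform bounds on phi max min}.
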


\begin{proof} 
Let $\mf r_k = (1/2) + (q_k^N/\sqrt N)$,
$\mf l_k = (1/2) - (q_k^N/\sqrt N) $, $k\in \bb T_N$.  With this
notation, equation \eqref{eqn: invariant phi} becomes
\[
\mf r_{k-1}\, \phi_{k-1,N}
+
\mf l_{k+1}\, \phi_{k+1,N}
=
\phi_{k,N} .
\]
Since $\mf r_k + \mf l_k = 1$, we have that
\[
\mf r_{k-1}\phi_{k-1,N} - \mf l_k \phi_{k,N} 
=
\mf r_k \phi_{k,N}- \mf l_{k+1}\phi_{k+1,N},
\quad
k\in \T_N\; .
\]
Denote by $\gamma$ be the common value of
$\mf r_k \phi_{k,N}- \mf l_{k+1}\phi_{k+1,N}$, to get the recursive
equation
\begin{equation}
\label{03}
\phi_{k+1,N} = \dfrac{\mf r_k \phi_{k,N} - \gamma}{\mf l_{k+1}}\;\cdot 
\end{equation}

With the convention that $\mf r_{N+1} = \mf r_1$ and $\mf l_{N+1} = \mf l_1$, let
\begin{equation*}
\mf R_{j,k} \;=\;  \prod_{i=j}^{k} \mf r_i \;, \qquad 
\mf L_{j,k} \;=\;  \prod_{i=j}^{k}  \mf l_i \;,
\quad 1 \,\le\,  j \,\le\,  k \,\le\,  N+1\;.
\end{equation*}
We extend the definition to indices $j>k$ by setting
$\mf R_{j,k} = \mf L_{j,k} = 1$ if $j>k$.  Solving the recursive
equation yields that
\[
\phi_{k,N}= \dfrac{\mf R_{1,k-1}}{\mf L_{2,k}}\, 
\phi_{1,N}
\;-\; \frac{ \mf S_{k} } {\mf L_{2,k}}\, \gamma \;,
\quad 2\,\le\, k\, \le\, N+1\; . 
\]
In this formula, 
\begin{equation*}
\mf S_{k} \;=\; \sum_{j=2}^k  \mf L_{2,j-1} \, \mf R_{j,k-1}\;,
\end{equation*}
with the convention, adopted above, that
$\mf L_{2,1} = \mf R_{k,k-1}=1$.

Since $\phi_{N+1,N}= \phi_{1,N}$, we have that
\begin{equation}
\label{04}
\gamma \;=\; \frac{\mf R_{1,N} - \mf L_{2,N+1}}{\mf S_{N+1}}\,\phi_{1,N} \;.
\end{equation}
Reporting this value in the equation for $\phi_{k,N}$ yields that
\begin{equation}
\label{01}
\phi_{k,N} \;=\; \Big\{ \dfrac{\mf R_{1,k-1}}{\mf L_{2,k}}\;-\;
\frac{ \mf S_{k} }{\mf S_{N+1}} \, 
\frac{\mf R_{1,N} - \mf L_{2,N+1}} {\mf L_{2,k}}\, \Big\}\,
\phi_{1,N} \;, \quad 2\,\le\, k\, \le\, N \;. 
\end{equation}

Therefore, for each $\phi_{1,N} \in \bb R$, the solution of the
difference equation \eqref{eqn: invariant phi} is given by
\eqref{01}. This proves existence and uniqueness up to a
multiplicative constant. Moreover, it is not difficult to check that
$\mf S_{k} \, \mf R_{1,N} \le \mf S_{N+1} \mf R_{1,k-1}$. Therefore,
as each variable $\mf r_i$, $\mf l_j$ is strictly positive for
for sufficiently large $N$,
the solution is strictly positive if
$\phi_{1,N}>0$. 
\end{proof}

Let
$ \phi_{\max,N} = \max_{1\leq k\leq N} \left\{\phi_{k,N} \right\}$
and
$ \phi_{\min,N} = \min_{1\leq k\leq N} \left\{\phi_{k,N}
\right\}$.

\begin{Lem} 
\label {lem: uniform bounds on phi max min}
Let $\phi_{k,N}$ be a solution of \eqref{eqn: invariant phi}. Then,
there exist constants $C_1=C_1(\omega), C_2=C_2(\omega)<\infty$ such
that for all $N\in \N$
\[
 \dfrac{\phi_{\max,N}}{\phi_{\min,N}} \leq C_1\quad
 \text{and} \quad
  \max_{1\leq k\leq N} |\phi_{k,N} - \phi_{k+1,N}|
 \leq \frac{C_2}{N}\, \phi_{\max,N}.
\]
\end{Lem}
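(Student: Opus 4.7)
The plan is to combine the explicit representation \eqref{01} for $\phi_{k,N}$ from Lemma \ref{l01} with the uniform bound $\sqrt{N}\,|q_k^N|\le C$ of Lemma \ref{lem: qx bound}. Since $|q_i^N|\le C/\sqrt{N}$, a Taylor expansion of $\log(1\pm 2q_i^N/\sqrt{N})$ yields
\[
\log \mf R_{j,k} \;=\; -(k-j+1)\log 2 \;+\; \frac{2}{\sqrt N}\sum_{i=j}^k q_i^N \;+\; O\!\left(\tfrac{k-j+1}{N}\right),
\]
with an analogous expression for $\log \mf L_{j,k}$. The middle term is bounded by $2C(k-j+1)/N\le 2C$, so there exist constants $0<c<C'<\infty$, independent of $j,k,N$, such that $c\,2^{-(k-j+1)} \le \mf R_{j,k},\,\mf L_{j,k} \le C'\,2^{-(k-j+1)}$.

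Next I reformulate \eqref{01} so that positivity becomes manifest. Using the factorization $\mf R_{j,k-1}\mf R_{k,N}=\mf R_{j,N}$, the telescoping identity
\[
\mf R_{1,k-1}\mf S_{N+1}-\mf S_k\mf R_{1,N}\;=\;\mf R_{1,k-1}\sum_{j=k+1}^{N+1}\mf L_{2,j-1}\mf R_{j,N}
\]
holds, and multiplying \eqref{01} by $\mf L_{2,k}\mf S_{N+1}$ gives
\[
\frac{\phi_{k,N}}{\phi_{1,N}}\;=\;\frac{\mf R_{1,k-1}\sum_{j=k+1}^{N+1}\mf L_{2,j-1}\mf R_{j,N}\,+\,\mf S_k\,\mf L_{2,N+1}}{\mf L_{2,k}\,\mf S_{N+1}}.
\]
Both summands in the numerator are non-negative. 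By the first step, they are of order $(N-k+1)\cdot 2^{-(N+k-2)}$ and $(k-1)\cdot 2^{-(N+k-2)}$ respectively, so their sum is $\asymp N\cdot 2^{-(N+k-2)}$, while the denominator is also $\asymp N\cdot 2^{-(N+k-2)}$. Hence $\phi_{k,N}/\phi_{1,N}$ is bounded above and below by positive constants uniformly in $k$ and $N$, proving $\phi_{\max,N}/\phi_{\min,N}\le C_1$.

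For the increment bound, combining \eqref{04} with $|\mf R_{1,N}-\mf L_{2,N+1}|\le 2C'\cdot 2^{-N}$ and $\mf S_{N+1}\ge c^2\,N\cdot 2^{-(N-1)}$ yields $|\gamma|\le C''\phi_{1,N}/N$. The recursion \eqref{03} may be rewritten as
\[
\phi_{k+1,N}-\phi_{k,N} \;=\; \phi_{k,N}\!\left(\frac{\mf r_k}{\mf l_{k+1}}-1\right)-\frac{\gamma}{\mf l_{k+1}},
\]
where $\mf r_k/\mf l_{k+1}-1 = 2(q_k^N+q_{k+1}^N)/(\sqrt N\,\mf l_{k+1})=O(1/N)$ and $\mf l_{k+1}\ge 1/3$ for $N$ large; the bound $|\phi_{k+1,N}-\phi_{k,N}|\le (C_2/N)\phi_{\max,N}$ then follows. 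The main obstacle is the reformulation in Step 2: the raw formula \eqref{01} is a difference of two quantities of the same order of magnitude, so a naive estimate would lose the uniform lower bound on $\phi_{k,N}/\phi_{1,N}$; only after exposing the telescoping identity does the manifestly positive representation appear, at which point Step 1's elementary product estimates close the argument.
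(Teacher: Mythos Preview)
Your proof is correct and follows a genuinely different route from the paper's.

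The paper works directly with the ratio estimate $|(\mf r_j/\mf l_k)-1|\le 8C/N$ from \eqref{02}. For the first assertion it bounds $\phi_{\max,N}/\phi_{\min,N}$ straight from \eqref{01} by two terms, each dominated by $[1+8C/N]^N\le e^{8C}$, without ever isolating two-sided bounds on the products $\mf R_{j,k}$, $\mf L_{j,k}$. For the second assertion it rewrites $\gamma/\mf l_{m+1}$ as a telescoping sum $\sum_j \mf L\,\mf R\,(\mf r_{j-1}/\mf l_{j-1}-1)$ and again feeds in the $O(1/N)$ ratio estimate term by term.

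Your approach instead establishes the uniform product bounds $c\,2^{-(k-j+1)}\le \mf R_{j,k},\mf L_{j,k}\le C'\,2^{-(k-j+1)}$ via the Taylor expansion, then reformulates \eqref{01} into a manifestly positive expression through the telescoping identity $\mf R_{1,k-1}\mf S_{N+1}-\mf S_k\mf R_{1,N}=\mf R_{1,k-1}\sum_{j>k}\mf L_{2,j-1}\mf R_{j,N}$, after which the ratio bound follows by direct order-of-magnitude counting. For $\gamma$ you avoid the paper's telescoping entirely: the crude triangle inequality $|\mf R_{1,N}-\mf L_{2,N+1}|\le 2C'\,2^{-N}$ suffices because the denominator $\mf S_{N+1}\gtrsim N\cdot 2^{-(N-1)}$ carries an extra factor of $N$. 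The paper's argument is shorter and avoids the explicit product asymptotics; your argument is more structural, yielding uniform two-sided bounds on $\phi_{k,N}/\phi_{1,N}$ for every $k$ rather than just the extremal ratio, and the treatment of $\gamma$ is arguably cleaner. (Minor typo: your displayed formula for $\mf r_k/\mf l_{k+1}-1$ has a spurious factor of $2$; the correct numerator is $(q_k^N+q_{k+1}^N)/\sqrt N$, which of course does not affect the $O(1/N)$ conclusion.)
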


\begin{proof}
By Lemma \ref{lem: qx bound}, there exists a finite constant $C$ such that 
\[
\max_{1\leq k\leq N} \left\{ |q_k^N| \right\} \leq C / \sqrt N.
\]
Therefore, for each $j$, $k\in \bb T_N$,
$|\, (\mf r_j/\mf l_k) \,-\, 1\,|$ and $|\, (\mf l_k/\mf r_j) \,-\, 1\,|$
are bounded from above by
\begin{equation}
\label{02}
2\, \frac{\max_\ell\, |q_\ell^N|}{\sqrt N}
\, \dfrac{1}{\dfrac12 - \dfrac{\max_\ell |q_\ell^N|}{\sqrt N}}
\; \leq\; \dfrac{8C}{N}, \text{ for $N\geq 4C$}. 
\end{equation}

Without loss of generality, we may assume that
$\phi_{1,N} = \phi_{\min,N}$, $\phi_{m,N} = \phi_{\max,N}$. Hence, by
\eqref{01} and since all terms are positive and
$\mf L_{2,N+1} = \mf L_{2,m} \, \mf L_{m+1,N+1}$,
$\phi_{\max,N}/\phi_{\min,N}$ is bounded by
\begin{equation*}
\dfrac{\mf R_{1,m-1}}{\mf L_{2,m}}\;+\;
\frac{ \mf S_{m} \, \mf L_{m+1,N+1}}{\mf S_{N+1}} \;.
\end{equation*}
By \eqref{02},
the first term is bounded by $[1+(8C/N)]^N$. 
We further show that the second term is also bounded by
$[1+(8C/N)]^N$ by rewriting
$\mf L_{m+1,N+1}$ as $\mf R_{m,N} \, [\mf L_{m+1,N+1} / \mf R_{m,N}]$ and using
$\mf S_{m} \, \mf R_{m,N} \le \mf S_{N+1}$. This proves the first
assertion of the lemma.

We turn to the second assertion. By \eqref{03},
\begin{equation*}
\phi_{m+1,N} \;-\; \phi_{m,N}  \;=\;
\Big( \, \frac {\mf r_m}{\mf l_{m+1}} \,-\, 1\,\Big)\, \phi_{m,N}
\;-\; \frac{\gamma}{\mf l_{m+1}}\;\cdot 
\end{equation*}

By \eqref{02}, the absolute value of the first term on the right-hand
side is bounded by $(8C/N)\, \max_{k} \phi_{k,N}$. By \eqref{04}, and
since $\mf L_{2,N+1} = \mf L_{1,N}$, the second one is equal to
\begin{equation*}
\frac{\phi_{1,N}}{\mf l_{m+1}}\, \frac{1}{\mf S_{N+1}}\, 
\sum_{j=0}^{N-1} \{\, \mf L_{1,j}\,
\mf R_{j+1,N} \,-\, \mf L_{1,j+1}\,\mf R_{j+2,N}\, \}\;,
\end{equation*}
where we used the convention that $\mf L_{i,j}=1$ and $\mf R_{i,j}=1$
if $i>j$. Changing variables this expression becomes
\begin{equation*}
\frac{\mf l_1}{\mf l_{m+1}}\, \frac{\phi_{1,N}}{\mf S_{N+1}}\,
\sum_{j=2}^{N+1} \mf L_{2,j-1}\,
\mf R_{j,N} \, \Big( \,  \frac{\mf r_{j-1}}{\mf l_{j-1}} \,-\,
1\Big)  \;,
\end{equation*}
There is $C_0>0$ such that $\mf l_1 / \mf l_{m+1} \leq C_0$. Also by \eqref{02}, the absolute value of the expression inside the
parenthesis is bounded by $8C/N$, uniformly over $j$. The remaining sum
is bounded by $\mf S_{N+1}$. This expression is, therefore, bounded by
$C_0(8C/N)\, \phi_{1,N}$. To complete the proof of the second assertion
of the lemma, it remains to recollect all previous estimates.
\end{proof}

\section{Results}
\label{sec03}

We first specify the initial measures for the zero-range
processes. These include the usual `local equilibrium' measures as
well as others.  We then state the main result of this work.

\subsection{Initial measures}

We consider an initial macroscopic density profile
$\rho_0(\cdot)\in L^1(\T)$, and an initial microscopic measure
satisfying the following condition. Denote by $\Pam_N$ the invariant
measure chosen so that $\max_{k\in\T_N} \left\{\phi_{k,N} \right\}=1$.

\begin{Cond} \label{cond: initial measure} Let $\{\mu^N\}_{N\in \N}$
be a sequence of probability measures on $\bb N_0^{\T_N}$
such that
\begin{enumerate}
\item[(a)] $\{\mu^N\}_{N\in \N}$ is associated with profile $\rho_0$ in the
sense that for any $G\in C(\T)$ and $\delta>0$
\begin{equation*}
\lim_{N\to \infty} \mu^N
\Big[ \, \Big|
\dfrac1N \sum_{k=1}^N G\Big(\dfrac kN \Big) \xi(k)
-
\int_\T G(x) \rho_0(x) dx
\Big|
>\delta
\, \Big]
 =0\; .
\end{equation*}
\item[(b)] The relative entropy of $\mu^N$ with respect to $\Pam_N$ is of
order $N$. There exists a finite constant $C_0$ such that
$ H(\mu^N | \Pam_N):= \int f_0 \ln f_0 \,d\Pam_N \le C_0 N$
\end{enumerate}
for all $N\ge 1$, where $f_0 = d\mu^N/d\Pam_N$.
\end{Cond}

A useful consequence of the relative entropy bound in part (b) of
Condition \ref{cond: initial measure} and the bounds on the fugacities
of the invariant measure $\Pam_N$ in Lemma \ref{lem: uniform bounds on
  phi max min} is that the expected number of particles under $\mu^N$
is of order $N$. Indeed, by the entropy bound,
\begin{equation}
\label{expected_number}
E_{\mu^N}\big[\sum_{k\in \T_N} \xi(k)\big] \;\le\;
\frac{1}{\gamma}H(\mu^N| \Pam_N) \;+\;
\frac{1}{\gamma}\log E_{\Pam_N}\big[e^{\gamma\sum_{x\in \T_N}\xi(k)}] 
\end{equation}
for all $\gamma>0$. By Condition 3.1.b and by definition of $\Pam_N$,
this expression is bounded by
\begin{equation*}
\frac{C_0}{\gamma}\,  N  \;+\; \frac{1}{\gamma} \sum_{k=1}^N
\log \frac{Z(e^\gamma \phi_{k,N})}{Z(\phi_{k,N})}\;\cdot
\end{equation*}
Since $\max_k \phi_{k,N} = 1$, $\min_k \phi_{k,N} \ge c_0>0$ and $Z$
is an increasing function defined on $\R_+$, choosing, say,
$\gamma=1$, yields that the previous expression is bounded by $C_0N$
for some finite constant $C_0$.

Condition \ref{cond: initial measure} is satisfied, for example, by
`local equilibrium' measures
$\left\{\mu^N_{\text{le}}\right\}_{N\in\N}$ associated to macroscopic
profiles $\rho_0$ in $L^\infty(\T)$.  For each $N\in \N$, let
$\mu^N_{\text{le}}$ be the product measure on
$\N_0^{\T_N}$ with marginals given by
\[
\mu^N_{\text{le}} (\xi(k) = n)  \;=\;
\Pb_{\tilde\phi_{k,N}}(n), \quad \text{for } k\in \T_N \,,\,  n\geq 0
\;, 
\]
where the parameters $\{ \tilde\phi_{k,N} : 1\le k\le N\}$ are such
that $E_{\mu^N_{\text{le}}} [\xi(k)] = \rho_{k,N}$ for
\[ \rho_{k,N} = N\int_{(k-1)/N}^{k/N} \rho_0(x) dx.\]

To prove Condition \ref{cond: initial measure}.a,
approximate the integral by a Riemann sum and apply Chebyshev and
Schwarz inequality, keeping in mind that the measure
$\mu^N_{\text{le}}$ is product. Next lemma asserts that condition (b)
is also in force. 

\begin{Lem}
There exists $C_0=C_0(\omega)>0$ such that $H(\mu^N_{\text{le}} | \Pam_N) \leq C_0 N$ for all $N\in \N$.
\end{Lem}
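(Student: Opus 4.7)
The strategy is straightforward: exploit the product structure of both measures to reduce the entropy to a sum of single-site entropies, and then show each summand is bounded uniformly in $k$ and $N$.

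\medskip

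\textbf{Step 1 (Product decomposition).} Since both $\mu^N_{\text{le}}$ and $\Pam_N$ are products over sites $k\in\T_N$, the relative entropy additively decomposes:
\[
H(\mu^N_{\text{le}}\,|\,\Pam_N) \;=\; \sum_{k=1}^{N} H\!\left(\Pb_{\tilde\phi_{k,N}}\,\big|\,\Pb_{\phi_{k,N}}\right).
\]
A direct computation using $\Pb_{\phi}(n)=\phi^{n}/(Z(\phi)g(n)!)$ gives, for $\tilde\phi,\phi>0$,
\[
H\!\left(\Pb_{\tilde\phi}\,|\,\Pb_{\phi}\right) \;=\; R(\tilde\phi)\,\log(\tilde\phi/\phi) \;-\; \log\bigl(Z(\tilde\phi)/Z(\phi)\bigr),
\]
with the obvious interpretation $H(\Pb_0|\Pb_\phi)=\log Z(\phi)$ when $\tilde\phi=0$.

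\medskip

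\textbf{Step 2 (Uniform bounds on the fugacities).} Since $\rho_0\in L^\infty(\T)$, one has $0\le\rho_{k,N}\le\|\rho_0\|_\infty$, and then by the two-sided inequality $g_*\rho\le\Phi(\rho)\le g^*\rho$ recalled after \eqref{Phi_eqn},
\[
0\le \tilde\phi_{k,N} \;=\; \Phi(\rho_{k,N}) \;\le\; g^*\,\|\rho_0\|_\infty \;=:\;M.
\]
On the other hand, by the normalization $\max_k\phi_{k,N}=1$ together with Lemma~\ref{lem: uniform bounds on phi max min}, we obtain the two-sided bound
\[
\tfrac{1}{C_1} \;\le\; \phi_{k,N} \;\le\; 1 \qquad\text{for all }k\in\T_N.
\]

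\medskip

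\textbf{Step 3 (Uniform bound on each summand).} On the compact set $[1/C_1,1]$, $\log Z(\phi)$ is bounded, so $|\log(Z(\tilde\phi_{k,N})/Z(\phi_{k,N}))|\le C$ uniformly, using continuity of $Z$ on $[0,M]$. For the term $R(\tilde\phi)\log(\tilde\phi/\phi)$, note that $R(\tilde\phi)\le\tilde\phi/g_*$, so $R(\tilde\phi)|\log\tilde\phi|\le(\tilde\phi/g_*)|\log\tilde\phi|$ is bounded on $[0,M]$ (it tends to $0$ as $\tilde\phi\downarrow 0$). Together with the uniform lower bound on $\phi_{k,N}$, this yields a constant $C=C(\omega,g,\rho_0)$ with
\[
H\!\left(\Pb_{\tilde\phi_{k,N}}\,|\,\Pb_{\phi_{k,N}}\right) \;\le\; C
\]
for every $k\in\T_N$ and every $N$.

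\medskip

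\textbf{Step 4 (Conclusion).} Summing over $k\in\T_N$ gives $H(\mu^N_{\text{le}}|\Pam_N)\le C_0 N$ with $C_0=C$.

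\medskip

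There is no serious obstacle here; the only subtle point is that the initial profile $\rho_0$ may vanish, so $\tilde\phi_{k,N}$ can degenerate to $0$. This is harmless because $x\log x\to 0$ as $x\downarrow 0$ and $Z(0)=1$, so the single-site entropy extends continuously up to $\tilde\phi=0$. The uniform lower bound $\phi_{k,N}\ge 1/C_1$ supplied by Lemma~\ref{lem: uniform bounds on phi max min} is what prevents any blow-up from the reference measure side.
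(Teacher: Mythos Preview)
Your proof is correct and follows essentially the same route as the paper: decompose the entropy as a sum of single-site entropies via the product structure, compute the explicit formula $\rho_{k,N}\log(\tilde\phi_{k,N}/\phi_{k,N}) + \log(Z(\phi_{k,N})/Z(\tilde\phi_{k,N}))$, and bound each summand using $\tilde\phi_{k,N}\le g^*\|\rho_0\|_\infty$, $Z(0)=1$, and the two-sided fugacity bound $c\le\phi_{k,N}\le 1$ from Lemma~\ref{lem: uniform bounds on phi max min}. Your treatment of the degenerate case $\tilde\phi_{k,N}\downarrow 0$ via $x\log x\to 0$ is slightly more explicit than the paper's, but the arguments are otherwise identical.
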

\begin{proof}  Write
\[
\begin{split}
H(\mu^N_{\text{le}} | \Pam_N)
=&
\sum_{k=1}^N E_{\mu^N_{\text{le}}} \left\{
\ln \left(\dfrac{\tilde \phi_{k,N}}{\phi_{k,N}} \right)^{\eta(k)}
+ \ln \dfrac{Z(\phi_{k,N} )}{Z(\tilde \phi_{k,N})} \right\}  \\
=&
\sum_{k=1}^N 
\rho_{k,N} \ln \dfrac{\tilde \phi_{k,N}}{\phi_{k,N}}
+
\sum_{k=1}^N   \ln \dfrac{Z(\phi_{k,N})}{Z(\tilde \phi_{k,N})}.
\end{split}
\]
Then, the lemma follows as (a) $\tilde \phi_{k,N}\leq g^*\rho_{k,N}$ is uniformly bounded by the fugacity bounds after \eqref{Phi_eqn} as $\|\rho_0\|_\infty<\infty$, (b) $Z(0)=1$, and (c) $0<c\leq \phi_{k,N}\leq 1$ for all $1\leq k \leq N$ by Lemma \ref{lem: uniform bounds on phi max min}.
\end{proof}
%%%%%%%%%%%%%%%%%%%%%%%%%%%%%%%%%

\subsection{Main result}
For each $N$, we will observe the evolution speeded up by $N^2$, and consider in the sequel the process $\eta_t: = \xi_{N^2t}$, generated by $N^2L$, for times $0\leq t\leq T$, where $T>0$ refers to a fixed time horizon.  
We will access the space-time structure of the process through the scaled mass empirical measure:
\begin{equation*}
\pi_t^N(dx)
:=
\dfrac 1 N 
\sum_{k=1}^N
\eta_t (k) \delta_{k/N}(dx)\;,
\end{equation*}
where $\delta_x$, $x\in\T$, stands for the Dirac mass at $x$.

Let $\Mb$ be the space of finite nonnegative measures on $\T$, and
observe that $\pi^N_t\in \Mb$.  We will place a metric
$d(\cdot,\cdot)$ on $\Mb$ which realizes the dual topology of $C(\T)$
(see \cite{KL}[p. 49] for a definitive choice).  Here, the
trajectories $\{\pi^N_t: 0\leq t\leq T\}$ are elements of the
Skorokhod space $D([0,T],\Mb )$, endowed with the associated Skorohod
topology.

In the following, for $G\in C(\T)$ and $\pi\in \Mb$, denote
$\langle G, \pi\rangle = \int_0^1 G(u)\pi(du)$.  Also, for a given
measure $\mu$, we denote expectation and variance with respect to
$\mu$ by $E_\mu$ and $\text{\rm Var}_\mu$.  Also, the process measure
and associated expectation governing $\eta_\cdot$ starting from $\mu$
will be denoted by $\P_\mu$ and $\E_\mu$.  When the process starts
from $\{\mu^N\}_{N\in \N}$, in the class satisfying Condition
\ref{cond: initial measure}, we will denote by $\P_N:= \P_{\mu^N}$ and
$\E_N:= \E_{\mu^N}$, the associated process measure and expectation.

We are now ready to state our main result.
\begin{Thm} \label{main thm}
For initial measures $\mu^N$ satisfying Condition \ref{cond: initial measure}, consider the speeded process $\eta_t$ as above. 
Then, for any $t\geq 0$, test function $G\in C^{\infty}(\T)$, and $\delta>0$,
\begin{equation*}
\lim_{N\to \infty} \P_N
\Big[ \Big|
\langle G, \pi^N_t \rangle
-
\int_\T G(x) \rho(t,x) dx
\Big|
>\delta
 \Big]
 =0,
\end{equation*}
where $\rho(t,x)$ is the unique weak solution of 
\begin{equation} \label{eqn: with ave}
\begin{cases}
\partial_t \rho(t,x)
=
\dfrac12 \partial_{xx} \Phi(\rho(t,x))
-
2 \partial_x \left(W'_\varepsilon(x)
\Phi(\rho(t,x))\right),\\
\rho(0,x) = \rho_0(x),
\end{cases}
\end{equation}
in the sense of Definition \ref{weak_def}, where $W'_\varepsilon(x) = \dfrac{W_{x+\e} - W_{x-\e}}{2\e}$ in terms of a fixed $\varepsilon>0$.
\end{Thm}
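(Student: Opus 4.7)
The proof follows the entropy method of Guo--Papanicolaou--Varadhan, adapted to an inhomogeneous environment. By Dynkin's formula applied to $F(\xi)=\langle G,\pi^N\rangle=N^{-1}\sum_k G(k/N)\xi(k)$,
\begin{equation*}
\langle G,\pi^N_t\rangle - \langle G,\pi^N_0\rangle = \int_0^t N^2 L \langle G,\pi^N_s\rangle\, ds + M^N_t(G),
\end{equation*}
where $M^N_t(G)$ is a mean-zero martingale. Expanding $L$ from \eqref{eqn: generator L} and regrouping the $1/2$ and $q_k^N/\sqrt N$ parts of the rates yields
\begin{equation*}
N^2 L \langle G,\pi^N\rangle = \frac{1}{2N}\sum_{k=1}^N g(\eta(k))\,\Delta_N G(k/N) + \frac{1}{N}\sum_{k=1}^N g(\eta(k))\cdot\sqrt{N}\,q_k^N\cdot \nabla_N G(k/N),
\end{equation*}
with $\Delta_N G(k/N)\to G''(k/N)$ and $\nabla_N G(k/N):=N[G((k+1)/N)-G((k-1)/N)]\to 2G'(k/N)$. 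By Lemma \ref{lem: qx bound} the factor $\sqrt N\,q_k^N$ is uniformly bounded in $k,N$, and when $k/N\to x$ we have $\sqrt{N}\,q_k^N\to W'_\e(x)$ by \eqref{q_conv}. Since $g(\eta(k))\leq g^*\eta(k)$ and the total mass is $O(N)$ by \eqref{expected_number}, a direct quadratic variation computation gives $\E_N[(M^N_t(G))^2]=O(1/N)$, so the martingale is negligible.

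Tightness of $\{\pi^N_\cdot\}$ in $D([0,T],\Mb)$ follows from Aldous's criterion applied to $\langle G,\pi^N\rangle$ for $G\in C^\infty(\T)$, combining the drift bound above with the linear growth of $g$ and the entropy estimate on the total mass. The real content of the theorem lies in a \textbf{replacement lemma} that homogenizes nonlinear occupations: for every smooth $G$,
\begin{equation*}
\lim_{\ell\downarrow 0}\limsup_{N\to\infty}\E_N\Big[\Big|\int_0^t \frac{1}{N}\sum_{k=1}^N G(k/N)\big\{g(\eta_s(k))-\Phi(\eta_s^{\ell N}(k))\big\}\,ds\Big|\Big]=0,
\end{equation*}
where $\eta^{\ell N}(k)$ is the empirical average of $\eta$ over a window of radius $\ell N$ about $k$. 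This is the point at which the inhomogeneity of the environment bites, since the reference measure $\Pam_N$ has spatially varying fugacities and no translation invariance is available. My plan is to use the entropy inequality and a Feynman--Kac argument to reduce the replacement to a Dirichlet form estimate, and then perform \emph{local} $1$- and $2$-block arguments on mesoscopic windows of size between $\ell N$ and $\sqrt{N}$: by Lemma \ref{lem: uniform bounds on phi max min}, the fugacities $\phi_{j,N}$ vary only by $O(1/N)$ across a unit window, so the localized dynamics can be compared to a translation-invariant zero-range process on a finite box, for which a polynomial spectral gap is available under our assumptions on $g$; the errors from replacing the true, inhomogeneous invariant measure by its local constant-fugacity counterpart are controlled by Lemmas \ref{lem: qx bound} and \ref{lem: uniform bounds on phi max min}.

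Combining the steps, any limit point $\pi_t(dx)=\rho(t,x)\,dx$ satisfies the weak formulation
\begin{equation*}
\langle G,\pi_t\rangle - \langle G,\pi_0\rangle = \int_0^t \int_\T \Big\{\tfrac12\Phi(\rho(s,x))G''(x) + 2W'_\e(x)\Phi(\rho(s,x))G'(x)\Big\}\,dx\,ds,
\end{equation*}
which matches \eqref{eqn: with ave} in the sense of Definition \ref{weak_def}; absolute continuity of $\pi_t$ and the initial condition come from the entropy bound $H(\mu^N|\Pam_N)\leq C_0 N$ and Condition \ref{cond: initial measure}(a). To close the argument I would then prove the required uniqueness of weak solutions: extract from a discrete Dirichlet form estimate on the microscopic system a weak energy bound $\int_0^T\!\int_\T(\partial_x \Phi(\rho))^2\,dx\,dt<\infty$, and deduce uniqueness in this energy class by a self-contained doubling-of-variables argument, using the strict positivity $\Phi'\geq C_1>0$ to control differences in the nonlinearity despite the merely Hölder regularity of $W'_\e$.

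\textbf{Main obstacle.} The principal difficulty is executing the GPV replacement in the non-translation-invariant environment: the standard $1$- and $2$-block arguments cannot be applied globally, and must instead be carried out locally with error estimates that rely quantitatively on the fugacity bounds of Lemma \ref{lem: uniform bounds on phi max min}. A secondary technical hurdle is uniqueness for \eqref{eqn: with ave}, since the only-Hölder coefficient $W'_\e$ prevents direct appeal to standard quasilinear PDE theory and forces a separate energy-based argument.
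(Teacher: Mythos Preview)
Your overall architecture matches the paper's almost exactly: martingale decomposition with vanishing quadratic variation, tightness, a replacement lemma proved via entropy/Feynman--Kac reduction to local $1$- and $2$-block estimates whose spectral gaps are obtained by comparison with a translation-invariant zero-range process (using Lemma~\ref{lem: uniform bounds on phi max min} to control the fugacity variation), then absolute continuity, an energy estimate from the microscopic Dirichlet form, and uniqueness. The identification of the replacement step as the crux, and the mechanism you describe for handling it, are precisely what the paper does.

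Two points of divergence are worth flagging. First, the paper only establishes $\partial_x\Phi(\rho)\in L^1([0,T]\times\T)$ (Proposition~\ref{prop: weak derivative of Phi}), not the $L^2$ energy you claim; this weaker bound is all that enters Definition~\ref{weak_def} and is all the uniqueness proof needs. Second, and more substantively, the paper does \emph{not} use a doubling-of-variables argument for uniqueness. Instead it passes to the antiderivative $\phi(s,x)=\int_0^x\rho(s,u)\,du - \int_0^1\int_0^x\rho(s,u)\,du\,dx$, shows $\overline\phi=\phi_1-\phi_2$ satisfies $\tfrac{d}{dt}\|\overline\phi(t,\cdot)\|_{L^2}^2 \le C\|\overline\phi(t,\cdot)\|_{L^2}^2$ via an integration-by-parts that exploits $\overline\Phi_{s,x}=\Phi'_{s,x}\,\partial_x\overline\phi$ and the boundedness of $W'_\e$, and closes with Gronwall. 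This $H^{-1}$-type argument sidesteps the H\"older-only regularity of $W'_\e$ cleanly, whereas a Kruzhkov doubling argument in the weak class of Definition~\ref{weak_def} (with only $L^1$ gradient control) would require more care to justify.
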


%%%%%%%%%%%%%%%%%%%%%%%%%%%%%%%%%
%%%%%%%%%%%%%%%%%%%%%%%%%%%%%%%%%
%%%%%%%%%%%%%%%%%%%%%%%%%%%%%%%%%
%%%%%%%%%%%%%%%%%%%%%%%%%%%%%%%%%
%%%%%%%%%%%%%%%%%%%%%%%%%%%%%%%%%
\section{Stochastic differentials and martingales} \label{section: martingale}
To analyze $\langle G, \pi^N_t\rangle$, we compute its stochastic differential in terms of certain martingales.  Let $G$ be a smooth function on $[0,T] \times \T$, and let us write $G_t(x) := G(t,x)$.  Then,
\begin{equation*}
M^{N,G}_t
=
\left\langle G_t,   \pi^N_t \right\rangle
-
\left\langle G_0,  \pi^N_0 \right\rangle
-
\int_0^t 
\Big\{
 \left\langle  \partial_s G_s, \pi_s^N\right\rangle + N^2L \left\langle G_s,   \pi^N_s \right\rangle 
 \Big\}ds
\end{equation*}
is a mean zero martingale.
Denote the discrete Laplacian $\Delta_{N}$ and discrete gradient $\nabla_N$ by
\begin{equation*}
\begin{split}
\Delta_N G\Big(\frac k N\Big)
:=&
N^2 \Big(G\Big(\frac {k+1}N\Big)+G\Big(\frac {k-1}N\Big)-2G\Big(\frac k N\Big)\Big),\\
\nabla_N G\Big(\frac k N\Big)
:=&
\dfrac N2  \Big(G\Big(\frac {k+1}N\Big) - G\Big(\frac {k-1}N\Big)\Big),
\end{split}
\end{equation*}
and write
\begin{equation} \label{gen_comp}
\begin{split}
&N^2L \left\langle G,  \pi^N_s \right\rangle 
= 
\dfrac1N \sum_{1\leq k\leq N}
\left(
\dfrac12 \Delta_{N} G_s\Big(\frac k N \Big) g(\eta_s(k))
+
2\nabla_{N} G_s\Big( \frac k N \Big) g(\eta_s(k)) \sqrt{N} q_k
\right).
\end{split}
\end{equation}
We will define
\begin{equation}
\label{DG}
D^{G,s}_{N,k} 
:=
\dfrac12 \Delta_{N} G_s\Big(\frac k N \Big)
+
2\nabla_{N} G_s\Big( \frac k N \Big) \sqrt{N} q_k^N.
\end{equation}
As  $\sqrt{N} \vert q_k^N \vert$ is uniformly bounded from above by a constant $C$, cf.\,\eqref{eqn: q bound}, we have
\begin{equation}
\label{DG_bound}
\left |D^{G,s}_{N,k}\right | \leq
\|\partial_{xx} G\|_\infty
 +
2C\|\partial_x G\|_\infty.
\end{equation}

The quadratic variation of $M^{N,G}_t$ is given by
\begin{equation*}
\langle M^{N,G} \rangle_t
=
\int_0^t 
\left\{
N^2L \left(\left\langle G_s,  \pi^N_s \right\rangle ^2 \right)
-
2 \left\langle G_s,  \pi^N_s \right\rangle
 N^2 L \left\langle G_s,  \pi^N_s\right\rangle 
\right\}
ds.
\end{equation*}
Standard calculation shows that
\begin{equation*} \label {quadratic variation ln k}
\begin{split}
\langle M^{N,G} \rangle_t
=&
\int_0^t 
 \sum_{k=1}^{N}
\left\{
g(\eta_s(k)) \Big( \dfrac12 +\dfrac{q_k^N}{\sqrt N}\Big)
\Big(G_s\Big(\frac {k+1}N\Big) - G_s\Big(\frac kN\Big)\Big)^2
\right.\\
&\quad\quad\quad\quad\quad\quad
+\left.
g(\eta_s(k)) \Big( \dfrac12 -\dfrac{q_k^N}{\sqrt N}\Big)
\Big(G_s\Big(\frac {k-1}N\Big) - G_s\Big(\frac kN\Big)\Big)^2
\right\}
ds.
\end{split}
\end{equation*}
This variation may be bounded as follows.
\begin{Lem} \label{lem: martingale bounds} 
For smooth functions $G$ on $[0,T]\times \T$, there is a constant $C_G$ such that for large $N$,
$$\sup_{0\leq t\leq T}\E_N \langle M^{N,G}\rangle_t \leq g^*  C_G T N^{-1}.$$
\end{Lem}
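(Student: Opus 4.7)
The strategy is to estimate the integrand of the quadratic variation pointwise and then take expectations, relying on the fact that the zero-range dynamics conserve the total number of particles and that the initial expected mass is of order $N$.

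First, I would bound each of the three factors in the summand of the displayed quadratic variation expression. Since $G$ is smooth in space, the Taylor expansion gives
\begin{equation*}
\Big|G_s\Big(\tfrac{k\pm 1}{N}\Big) - G_s\Big(\tfrac{k}{N}\Big)\Big|^2 \;\le\; \tfrac{1}{N^2}\,\|\partial_x G\|_\infty^2\;.
\end{equation*}
By Lemma \ref{lem: qx bound}, $|q_k^N|/\sqrt{N}\le C/N$, so the rate weights $(1/2)\pm q_k^N/\sqrt N$ are each bounded above by $1$ for $N$ large. Finally, the hypothesis $g(k)\le g^* k$ gives $g(\eta_s(k))\le g^*\, \eta_s(k)$. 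Multiplying these bounds together and summing over $k$ produces
\begin{equation*}
\sum_{k=1}^N \Big\{\cdots \Big\} \;\le\; \tfrac{2\,g^*\,\|\partial_x G\|_\infty^2}{N^2}\sum_{k=1}^N \eta_s(k)\;.
\end{equation*}

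Next, I would use particle conservation. Under the zero-range dynamics generated by $N^2L$, the total number of particles $\sum_{k=1}^N \eta_s(k) = \sum_{k=1}^N \eta_0(k)$ is invariant in time. Consequently,
\begin{equation*}
\E_N\Big[\sum_{k=1}^N \eta_s(k)\Big] \;=\; \E_N\Big[\sum_{k=1}^N \eta_0(k)\Big] \;=\; E_{\mu^N}\Big[\sum_{k=1}^N \xi(k)\Big]\;\le\; C_0 N,
\end{equation*}
where the last bound is exactly the consequence of the entropy condition \eqref{expected_number} already established in the paper.

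Combining the two displays, taking expectation under $\P_N$, and integrating in $s$ over $[0,t]\subset [0,T]$ yields
\begin{equation*}
\E_N\langle M^{N,G}\rangle_t \;\le\; \int_0^t \tfrac{2\, g^*\,\|\partial_x G\|_\infty^2}{N^2}\,\E_N\Big[\sum_{k=1}^N \eta_s(k)\Big]\,ds \;\le\; \tfrac{2\, g^*\,C_0\,\|\partial_x G\|_\infty^2\, T}{N}\;,
\end{equation*}
so the claim holds with $C_G = 2C_0\,\|\partial_x G\|_\infty^2$, and the right-hand side is independent of $t\in[0,T]$, giving the supremum bound. No step here is really hard; the only point worth verifying carefully is the conservation of particles together with the $O(N)$ bound on the initial mass, which is why I single it out explicitly.
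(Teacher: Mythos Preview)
Your proof is correct and follows essentially the same approach as the paper's: bound the rate weights by $1$ for large $N$, use $g(k)\le g^* k$ and the smoothness of $G$ to control the increment squared, then invoke particle conservation together with the $O(N)$ bound \eqref{expected_number} on the initial expected mass. The only cosmetic difference is that you track the constant as $2C_0\|\partial_x G\|_\infty^2$ explicitly, whereas the paper simply absorbs it into $C_G$.
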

\begin{proof}
For $N$ large, we may assume that $1/2 +\vert q_k^N /\sqrt N\vert \leq 1$. Also since $G$ is smooth and $g(\cdot)$ grows at most linearly, we obtain
\begin{eqnarray*}
\E_N\langle M^{N,G} \rangle_t
 &\leq &
 2g^* \| \partial_x G\|_\infty N^{-1}
\E_N
 \Big [
 \dsp\int_0^t 
\dfrac1N \sum_{k=1}^N
\eta_s (k)
 ds  \Big ]\\
 &=&
 2g^* \| \partial_x G\|_\infty N^{-1} t\,
\E_N
 \Big [
\dfrac1N \sum_{k=1}^N \eta_0(k)
  \Big ].
\end{eqnarray*}
We have used that total number of particles is conserved in the last equality.  By \eqref{expected_number}, we have that
$\sup_N  E_N \big [ N^{-1} \sum_{k=1}^N \eta_0(k)\big ]< \infty$, and the result follows.  \end{proof}

%%%%%%%%%%%%%%%%%%%%%%%%%%%%%%%%%
%%%%%%%%%%%%%%%%%%%%%%%%%%%%%%%%%
%%%%%%%%%%%%%%%%%%%%%%%%%%%%%%%%%
%%%%%%%%%%%%%%%%%%%%%%%%%%%%%%%%%
%%%%%%%%%%%%%%%%%%%%%%%%%%%%%%%%%
%%%%%%%%%%%%%%%%%%%%%%%%%%%%%%%%%
%%%%%%%%%%%%%%%%%%%%%%%%%%%%%%%%%
\section{Proof outline}
We now outline the proof of Theorem \ref{main thm}.
Let  $Q^N$ the probability measure on the trajectory space $D([0,T],\Mb )$ 
governing $\pi_{\cdot}^N$ when the process starts from $\mu^N$.
By Lemma \ref{lem: tightness}, the family of measures $\left\{Q^N\right\}_{N\in \N}$ is tight with respect to the uniform topology, stronger than the Skorokhod topology. 

Let now $Q$ be any limit measure. 
We will show that $Q$ is supported on a class of weak solutions to the nonlinear PDE \eqref{eqn: with ave}.
\vskip .1cm

{\it Step 1.}
Let $G$ be smooth on $[0,T]\times \T$.
Recall the martingale $M^{N,G}_t$ and its quadratic variation $\langle M^{N,G}\rangle_t$ in the last section.
By Lemma \ref{lem: martingale bounds}, we have $\E_N \big( M^{N,G}_T\big)^2 = \E_N \langle M^{N,G} \rangle_T$
vanishes as $N\to \infty$.
By Doob's inequality, for each $\delta>0$,
\begin{eqnarray*}
&&
\P_N
\Big[
\sup_{0\leq t\leq T} 
\big| 
\big\langle G_t,  \pi^N_t \big\rangle
-
\big\langle G_0,  \pi^N_0 \big\rangle  
-
 \int_0^t \big( \big\langle \partial_s G_s,  \pi^N_s \big\rangle + N^2L  \big\langle G_s,  \pi^N_s \big\rangle   \big)   ds \big|
>\delta
\Big]\\
&& \leq  \dfrac{4}{\delta^2}
 \E_N \big\langle M^{N,G}\big\rangle_T \ \rightarrow \ 0 \ \ {\rm as \ } N\to\infty.
\end{eqnarray*}
Recall the evaluation of $N^2L  \left\langle G_s,  \pi^N_s \right\rangle$ in \eqref{gen_comp}.
Then,
\begin{eqnarray} \label {eqn before replace}
&&
\lim_{N\to \infty}
\P_N
\Big[
\sup_{0\leq t\leq T} 
\Big| 
\left\langle G_t,  \pi^N_t \right\rangle
-
\left\langle G_0,  \pi^N_0 \right\rangle
-
\int_0^t \Big\{ \left\langle \partial_s G_s,  \pi^N_s \right\rangle 
\\
&&
 \dfrac1N \sum_{1\leq k\leq N}
\left(
\dfrac12 \Delta_{N} G_s\Big(\frac k N \Big) g(\eta_s(k))
+
2\nabla_{N} G_s\Big( \frac k N \Big) g(\eta_s(k)) \sqrt{N} q_k^N
\right) 
   \Big\}   ds \Big|
>\delta
\Big]
=0.
\nonumber
\end{eqnarray}
%%%%%%%%%%%%
\vskip .1cm

{\it Step 2.}
We now replace the nonlinear term $g(\eta_s(k))$
by a function of the empirical density of particles.
To be precise,
let $\eta^l (x) = \dfrac{1}{2l+1} \sum_{|y-x|\leq l} \eta(y)$,
that is the average density of particles in the box centered at $x$ with length $2l+1$.  

Recall the coefficient $D^{G,s}_{N,k}$ in \eqref{DG}.
By the triangle inequality, the $1$ and $2$-block estimates (Lemmas \ref{lem: 1 block} and \ref{lem: 2 blocks})
imply the following replacement lemma.

\begin{Lem} [Replacement Lemma]
\label{replacement_lemma}
For each $\delta>0$, we have
\begin{equation*}
\begin{split}
\limsup_{\theta\to 0} \limsup_{N\to \infty} 
 \P_N
\Big[
\Big|
\dfrac1N \sum_{1\leq k \leq N }
\int_0^T 
D_{N,k}^{G,t}
\Big( g\left (\eta_t (k)\right )
-\Phi\left(\eta_{t}^{\eps N}(k)\right)\Big) dt
\Big|
 \geq \delta
\Big]
=0.
\end{split}
\end{equation*}
\end{Lem}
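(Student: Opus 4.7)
The plan is to reduce the statement to a direct triangle-inequality decomposition, relying on the one-block and two-block estimates (Lemmas \ref{lem: 1 block} and \ref{lem: 2 blocks}) as the two main inputs. The Replacement Lemma itself is then essentially a bookkeeping corollary of these two estimates.

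First, the coefficients $D_{N,k}^{G,t}$ satisfy the deterministic bound $|D_{N,k}^{G,t}| \le C_G$ uniformly in $N, k$, and $t\in[0,T]$, by \eqref{DG_bound} together with Lemma \ref{lem: qx bound}. Hence by Markov's inequality the event in the statement is controlled by
\begin{equation*}
\P_N\Big[ \frac{C_G}{N} \sum_{k=1}^N \int_0^T \big| g(\eta_t(k)) - \Phi(\eta_t^{\varepsilon N}(k)) \big|\, dt \ge \delta \Big],
\end{equation*}
so it suffices to show the absolute-value sum vanishes in probability in the iterated limit.

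Next, introduce an intermediate box scale $\ell = \lfloor \theta N \rfloor$ and apply the triangle inequality
\begin{equation*}
\big| g(\eta_t(k)) - \Phi(\eta_t^{\varepsilon N}(k)) \big| \;\le\; \big| g(\eta_t(k)) - \Phi(\eta_t^{\ell}(k)) \big| \;+\; \big| \Phi(\eta_t^{\ell}(k)) - \Phi(\eta_t^{\varepsilon N}(k)) \big|.
\end{equation*}
The first summand, after averaging over $k$ and $t$, is exactly the quantity controlled by the one-block estimate (Lemma \ref{lem: 1 block}): local averaging of $g(\eta_t(k))$ over a mesoscopic block of size $2\ell+1$ may be replaced by $\Phi$ evaluated at the empirical density in that block, with error vanishing in probability as $N\to\infty$ followed by $\theta\to 0$.

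For the second summand, use the bound $\Phi'\le C_2<\infty$ established just after \eqref{Phi_eqn}, so that
\begin{equation*}
\big| \Phi(\eta_t^{\ell}(k)) - \Phi(\eta_t^{\varepsilon N}(k)) \big| \;\le\; C_2\, \big| \eta_t^{\ell}(k) - \eta_t^{\varepsilon N}(k) \big|.
\end{equation*}
The resulting difference of two block-averaged densities at scales $\ell = \theta N$ and $\varepsilon N$ is precisely what the two-block estimate (Lemma \ref{lem: 2 blocks}) controls, and its space-time average again vanishes in probability in the iterated limit. Combining these two contributions and using subadditivity of $\P_N$ yields the Replacement Lemma.

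The main obstacle in this proof is entirely subcontracted to the one- and two-block estimates themselves, rather than the decomposition step outlined here. Because the random environment $\{q_k^N\}$ breaks translation invariance, the standard GPV approach does not directly apply: one must localize to mesoscopic boxes on which the environment is essentially constant (using Lemma \ref{lem: qx bound} and the Hölder regularity of $W$), then exploit spectral gap or mixing estimates for the zero-range process restricted to those boxes to carry out the averaging in time. Provided those technical inputs are in hand, the present lemma is immediate from the triangle inequality together with the Lipschitz property of $\Phi$.
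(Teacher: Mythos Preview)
Your overall strategy---triangle inequality plus the $1$- and $2$-block estimates---is exactly what the paper does (it states so in one line before Lemma~\ref{replacement_lemma}). However, the way you carry out the decomposition does not match the inputs you invoke, in two related places.

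First, by bounding $|D_{N,k}^{G,t}|\le C_G$ at the outset and pulling the absolute value inside both the sum and the time integral, your ``$1$-block piece'' becomes
\[
\frac{1}{N}\sum_{k}\int_0^T \big|\,g(\eta_t(k))-\Phi(\eta_t^{l}(k))\,\big|\,dt,
\]
which is \emph{not} what Lemma~\ref{lem: 1 block} controls. That lemma keeps the coefficient $D_{N,k}^{G,s}$ inside and places the absolute value \emph{outside} the time integral:
\[
\frac{1}{N}\sum_{k}\Big|\int_0^T D_{N,k}^{G,s}\big(g(\eta_s(k))-\Phi(\eta_s^{l}(k))\big)\,ds\Big|.
\]
This is not cosmetic: the proof of Lemma~\ref{lem: 1 block} goes through a Feynman--Kac/Rayleigh argument that requires centering the integrand under the canonical measure, and it is precisely the signed quantity $D_{N,k}^{G,s}(g-\Phi)$ that can be centered, not $|g-\Phi|$. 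The correct decomposition is to split at the level of the full expression, keep $D$ in the first piece to invoke Lemma~\ref{lem: 1 block} verbatim, and only bound $|D|$ in the second piece where Lemma~\ref{lem: 2 blocks} already has the absolute value inside.

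Second, note that in this paper $\eps$ is a macro for $\theta$, so the target block in the statement is $\theta N$ itself. The intermediate scale must be a \emph{fixed} integer $l$ (sent to infinity after $N\to\infty$ and $\theta\to0$), exactly as in the hypotheses of Lemmas~\ref{lem: 1 block} and~\ref{lem: 2 blocks}; taking $\ell=\lfloor\theta N\rfloor$ would make the spectral-gap constants $C_{k,l,j}\sim l^2$ in the $1$-block proof diverge with $N$. With these two adjustments your argument coincides with the paper's.
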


\vskip .1cm
{\it Step 3.}
For each $\eps>0$, take $\iota_\eps = (2\eps)^{-1} \id_{[-\eps,\eps]}$.
The average density $\eta_{t}^{\eps N}(k)$ is written as a function of the empirical measure $\pi^N_t$ 
\begin{equation*}
\eta_{t}^{\eps N}(k)
=
\dfrac{2\eps N}{2\theta N+1}
\langle \iota_{\eps}(\cdot -  k/N),  \pi^N_t) \rangle.
\end{equation*}

Then, noting the form of $D^{G,s}_{N,k}$ and the quenched convergence \eqref{q_conv}, we may replace $\nabla_N$, $\Delta_N$, and $\sqrt N q_k^N$
by $\partial_x$, $\partial_{xx}$, and $W^\prime_\e \Big(\dfrac kN\Big)$
respectively, and also the sum by an integral.   Hence, we get from \eqref{eqn before replace}
in terms of the induced distribution $Q^N$ that
\begin{equation}
\label{step3 beta 0 eqn}
\begin{split}
&
\limsup_{\eps \to 0}
\limsup_{N\to \infty}
Q^N
\Big[
\Big | 
\left\langle G_T,  \pi^N_T \right\rangle
-
\left\langle G_0,  \pi^N_0 \right\rangle
-
\int_0^T 
\Big\{ \left\langle \partial_s G_s,  \pi^N_s \right\rangle
 \\
&
+
\int_\T
\left( \dfrac12
\partial_{xx} G_s\left(x \right) 
+
2\partial_x G_s\left( x \right) W^\prime_\e (x)
\right)
\Phi\big(\langle \iota_{\eps}(\cdot -  x),  \pi^N_s\rangle \big)
   dx \Big\}  ds \Big |
>\delta
\Big]
=0.
\end{split}
\end{equation}

Taking $N\to \infty$, along a subsequence, as the set of trajectories in \eqref{step3 beta 0 eqn} is open with respect to the uniform topology, we obtain
\begin{equation*}
\begin{split}
&
\limsup_{\eps \to 0}
Q
\Big[
\Big| 
\left\langle G_T,  \pi_T \right\rangle
-
\left\langle G_0,  \pi_0 \right\rangle
-
\int_0^T \Big\{ \left\langle \partial_s G_s,  \pi_s \right\rangle  \\
&
+
\int_\T
\left(
\dfrac12 \partial_{xx} G_s\left(x \right) 
+
2\partial_x G_s\left( x \right) W^\prime_\e (x)
\right)
\Phi\big(\langle \iota_{\eps}(\cdot -  x),  \pi_s\rangle \big)
  dx \Big\}ds
  \Big|
>\delta
\Big]
=0.
\end{split}
\end{equation*}

\vskip .1cm
{\it Step 4.} We show in
Lemma \ref{lem: rho < phi c} that $Q$ is supported on trajectories $\pi_s(dx) =\rho(s,x) dx $ where $\rho\in L^1([0,T]\times \T)$.
To replace $\langle \iota_{\eps}(\cdot - x),  \pi_s \rangle$
by $\rho(s,x)$, it is enough to show, for all $\delta>0$, that
\begin{equation*}
\begin{split}
&
\limsup_{\eps \to 0}
Q
\Big[
\Big| 
\int_0^T 
\int_\T
D_{G,s}
\left(
\Phi\big(\langle \iota_{\eps}(\cdot -  x),  \pi_s\rangle \big)
-
\Phi(\rho(s,x))
\right) dx 
ds \Big|
>\delta
\Big]
=0.
\end{split}
\end{equation*}
where $D_{G,s} = \dfrac12 \partial_{xx} G_s\left(x \right) +2 \partial_x G_s\left( x \right) W^\prime_\e(x)$.
In fact, considering the Lebesgue points of $\rho$, almost surely with respect to $Q$,
\begin{equation*}
\lim_{\eps\to 0}
\int_0^T 
\int_\T
D_{G,s}
\Phi{\langle \iota_{\eps}(\cdot - x),  \pi_s \rangle}
 dx  ds 
 =
 \int_0^T 
\int_\T
D_{G,s}
 \Phi(\rho(s,x))
 dx  ds .
\end{equation*}
Now, we have
\begin{equation*}
\begin{split}
&
Q
\Big[
\left\langle G_T,  \rho(T,x) \right\rangle
-
\left\langle G_0,  \rho(0,x) \right\rangle
-
\int_0^T \Big\{ \left\langle \partial_s G_s,  \rho(s,x) \right\rangle \\
&
+
\int_\T
\left(
\dfrac12 \partial_{xx} G_s\left(x \right) 
+
2 \partial_x G_s\left( x \right) W^\prime_\e(x)
\right)
\Phi(\rho(s,x))
 dx  \Big\}  ds 
   =0
\Big]
=1.
\end{split}
\end{equation*}

\vskip .1cm
{\it Step 5.}
Hence, each $\rho(t,x)$ solves weakly the 
equation 
\[ \partial_t \rho 
=
 \dfrac12 \partial_{xx} \Phi(\rho) -2 \partial_x \big(W^\prime_\e (x)\Phi(\rho)\big).
\]
  As a consequence of the weak formulation, $\rho$ satisfies conservation of mass (cf. Lemma \ref{lem: rho < phi c}): $\int_\T \rho(t,x)dx = \int_\T \rho_0(x)dx$.  Moreover, the initial condition $\rho(0,x) = \rho_0(x)$ holds by Condition \ref{cond: initial measure}.  From convergence of $Q^N$ to $Q$ with respect to the uniform topology, $\rho$ is weakly continuous in time:  Namely, for each test function $G\in C(\T)$, the map $t\mapsto \int_\T G(x)\rho(t,x)dx$ is continuous.
  In addition, in Proposition \ref{prop: weak derivative of Phi}, we show an energy estimate which defines a weak spatial derivative of $\Phi(\rho(t,x))$.

We show in Subsection \ref{section uniqueness} that there is at most one weak solution $\rho$ to \eqref{eqn: with ave}, subject to these constraints (cf. Definition \ref{weak_def}).
 We conclude then that the sequence of $Q^N$ converges weakly to the Dirac measure on $\rho(\cdot,x)dx$.
Finally, as $Q^N$ converges to $Q$ with respect to the uniform topology, we have for each $0\leq t\leq T$ that $\langle G,\pi_t^N\rangle$ weakly converges to the constant $\int_\T G(x)\rho(t,x)dx$, and therefore convergence in probability as stated in Theorem \ref{main thm}.
\qed

%%%%%%%%%%%%%%%%%%%%%%%%%%%
%%%%%%%%%%%%%%%%%%%%%%%%%%%
%%%%%%%%%%%%%%%%%%%%%%%%%%%
%%%%%%%%%%%%%%%%%%%%%%%%%%%
%%%%%%%%%%%%%%%%%%%%%%%%%%%
%%%%%%%%%%%%%%%%%%%%%%%%%%%
%%%%%%%%%%%%%%%%%%%%%%%%%%%
%%%%%%%%%%%%%%%%%%%%%%%%%%%
%%%%%%%%%%%%%%%%%%%%%%%%%%%
%%%%%%%%%%%%%%%%%%%%%%%%%%%
\section{$1$-block estimate}

Following the scheme of \cite{JLS} and \cite{FSX}, the `$1$-block'
estimate is obtained by using a Rayleigh-type estimation of a
variational expression derived from a Feynman-Kac bound.  A spectral
gap bound plays an important role in this step.  Since there are
differences here in the context of the random environment, all details
are given.

Recall the generator $L$,
cf.\,\eqref{eqn: generator L}, and the invariant measure $\Pam_N$,
cf.\,Section \ref{sec03}.  As $\Pam_N$ is not reversible with respect
to $L$, we will work with $S$, the symmetric part of $L$:
\[
Sf(\eta) = \dfrac12 \, 
\sum_{k=1}^N 
\Big\{
g(\eta(k))
\mf p_{k,+}^N
\big(f(\eta^{k,k+1}) - f(\eta) \big)
+
g(\eta(k))
\mf p_{k,-}^N
\big(f(\eta^{k,k-1}) - f(\eta) \big)
\Big\}
\]
where
\begin{equation} \label {eqn: jump prob left right}
\mf p_{k,+}^N:=\big( \dfrac12 + \dfrac{q_k^N}{\sqrt N}\big)
+
\dfrac{\phi_{k+1,N}}{\phi_{k,N}}
\big( \dfrac12 - \dfrac{q_{k+1}^N}{\sqrt N}\big),
\quad
\mf p_{k,-}^N:= \big( \dfrac12 - \dfrac{q_k^N}{\sqrt N}\big)
+
\dfrac{\phi_{k-1,N}}{\phi_{k,N} }
\big( \dfrac12 + \dfrac{q_{k-1}^N}{\sqrt N}\big).
\end{equation}
Then, $\Pam_N$ is reversible under the generator $S$.
The Dirichlet form is
\begin{equation*}
\begin{split}
E_{\Pam_N}
\left[ f(-Sf) \right]
=
E_{\Pam_N}
\left[ f(-Lf) \right]
=
\dfrac12 \, \sum_{k=1}^N
E_{\Pam_N}
\Big[
g(\eta(k))
\mf p_{k,+}^N
\big(f(\eta^{k,k+1}) - f(\eta) \big)^2
\Big].
\end{split}
\end{equation*}

\subsection{Spectral gap bound for $1$-block estimate}
For $k\in\T_N$ and $l\geq 1$, define the set $\L_{k,l}=\left\{k-l,k-l+1,\ldots,k+l\right\}\subset \T_N$. Consider the process restricted to $\L_{k,l}$ generated by
$S_{k,l}$ where
\begin{equation}  \label {eqn: S_k l}
\begin{split} 
S_{k,l}f(\eta)
=
\dfrac12 \, \sum_{x,x+1\in \L_{k,l}}
\Big\{
&g(\eta(x))
\mf p_{x,+}^N
\big(f(\eta^{x,x+1}) - f(\eta) \big) \\
&\quad+
g(\eta(x+1))
\mf p_{x+1,-}^N
\big(f(\eta^{x+1,x}) - f(\eta) \big)
\Big\}.
\end{split}
\end{equation}

Let  $\Omega_{k,l} = \N_0^{\L_{k,l}}$ be the state space of configurations restricted on sites $\L_{k,l}$. For each $\eta\in \Omega_{k,l}$, define
$\kappa_{k,l} (\eta) =  \prod_{x\in \L_{k,l}} \Pb_{\phi_{x,N}}(\eta(x))$,
that is, $\kappa_{k,l}$ is the product measure $\kappa:=\Pam_N $ restricted to $\Omega_{k,l}$.
Define the state space of configurations with exactly $j$ particle on the sites $\L_{k,l}$:
$$\Omega_{k,l,j} = \{\eta \in \Omega_{k,l}: \sum_{x\in \L_{k,l} }\eta(x) = j\}.$$
Let $\kappa_{k,l,j}$ be the associated reversible canonical measure
obtained by conditioning $\kappa_{k,l}$ on $\Omega_{k,l}$.
The corresponding Dirichlet form is
\begin{equation} \label {d form asymm}
E_{\kappa_{k,l,j}}
\left[ f(-S_{k,l} f) \right]
=
\dfrac12 \sum_{x,x+1\in \L_{k,l}}
 E_{\kappa_{k,l,j}}
\Big[
g(\eta(x))
\mf p_{x,+}^N
\big(f(\eta^{x,x+1}) - f(\eta) \big)^2
\Big]. 
\end{equation}

We will obtain the spectral gap estimate corresponding to the localized inhomogeneous process by comparison with the spectral gap for the standard translation-invariant localized process.  
Consider the generator $L_l$ on $\Omega_{k,l}$ given by
\begin{equation*}
\label{onestar}
L_lf(\eta)
=
\sum_{x,x+1\in \L_{k,l}}
\frac{1}{2}\left\{
g(\eta(x)) \left [  f\left(\eta^{x,x+1} \right)  - f(\eta)   \right] 
+g(\eta(x+1))\left [  f\left(\eta^{x+1,x}  \right)   - f(\eta) \right ]
\right\}.
\end{equation*}
For any $\rho>0$, let $\nu_\rho$ be the product measure on $\Omega$ with common marginal $\Pb_{\phi}$ on each site $k\in \N$ with mean $\rho$,
and let $\nu^\rho_{l}$ be its restriction to $\Omega_{k,l}$. 

Consider $\nu_{l,j}$, the associated canonical measure on $\Omega_{k,l,j}$, with respect to $j$ particles in $\L_{k,l}$. Notice that $\nu_{l,j}$ does not depend on $\rho$.  It is well-known that both $\nu^\rho_l$ and $\nu_{l,j}$ are  invariant measures with respect to the localized generator $L_l$ (cf.~\cite{A}).
The corresponding Dirichlet form is given by
\begin{equation} \label {d form symm}
\begin{split}
E_{\nu_{l,j}}
\left[ f(-L_l f) \right]
=&
\dfrac12 \sum_{x,x+1\in \L_{k,l}}
E_{\nu_{l,j}}
 \left[ 
 g(\eta(x)) \left(  f\left(\eta^{x,x+1} \right)  - f(\eta)   \right)^2
 \right].
\end{split}
\end{equation}

We are now ready to state the lemma for the spectral gap bounds.
Recall $\mf p_{k,+}^N$ from \eqref{eqn: jump prob left right}.
Let $ r_{k,l,N}^{-1} := \min_{x\in \L_{k,l}} \left\{ \mf p_{x,+}^N \right\}$.
\begin{Lem} \label {spectral gap ln k}
We have the following estimates:
\begin{enumerate}
\item Uniform bound:  For all $\eta \in \Omega_{k,l,j}$, we have
\begin{equation} \label {eqn: RN for klj ln k case}
\left(\dfrac{\phi_{{\rm min},k,l}}{\phi_{{\rm max},k,l}} \right)^j
\leq
\dfrac{\kappa_{k,l,j} (\eta) }{\nu_{l,j}(\eta)}
\leq
\left(\dfrac{\phi_{{\rm max},k,l}}{\phi_{{\rm min},k,l}} \right)^j
\end{equation}
where 
$\dsp \phi_{{\rm min},k,l} = \min_{x\in\L_{k,l}} \phi_{x,N}$ and $\dsp \phi_{{\rm max},k,l} = \max_{x\in\L_{k,l}} \phi_{x,N}$.
\item Poincar\'e inequality:  We have
\begin{equation*}
{\rm Var}_{\kappa_{k,l,j}} (f)
\leq
 C_{k,l,j} 
 E_{\kappa_{k,l,j}}
 \left[ f(-S_{k,l}f) \right]
\end{equation*}
where 
$
 C_{k,l,j}
:=
C (2l+1)^2 r_{k,l,N}  \left(\dfrac{\phi_{{\rm max},k,l}}{\phi_{{\rm min},k,l}} \right)^{2j}
$ bounds the inverse of the spectral gap of $-S_{k,l}$ on $\Omega_{k,l,j}$
and $C$ is a universal constant.

\item For each $l$ fixed, we have
$$\lim_{N\uparrow\infty} \sup_{1\leq k\leq N} \dfrac{\phi_{{\rm max},k,l}}{\phi_{{\rm min},k,l}} = 1, 
\quad
\lim_{N\uparrow\infty} \sup_{1\leq k\leq N} r_{k,l,N} = 1$$
and hence, for fixed $l$ and $j$, $\dsp \sup_{N\geq 1}\sup_{1\leq k\leq N} C_{k,l,j}<\infty$.
\end{enumerate}
\end{Lem}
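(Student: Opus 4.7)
For \textbf{(1)}, the plan is a direct grand-canonical computation. After conditioning on $\sum_{x \in \L_{k,l}}\tilde\eta(x) = j$, the factors $\prod_x 1/Z(\phi_{x,N})$ cancel between numerator and denominator in $\kappa_{k,l,j}(\eta)$, and an analogous cancellation occurs for $\nu_{l,j}$; hence on $\Omega_{k,l,j}$,
$$\frac{\kappa_{k,l,j}(\eta)}{\nu_{l,j}(\eta)} \;=\; \frac{\prod_{x \in \L_{k,l}} \phi_{x,N}^{\eta(x)}}{E_{\nu_{l,j}}\bigl[\prod_{x \in \L_{k,l}} \phi_{x,N}^{\tilde\eta(x)}\bigr]}.$$
Since $\sum_x \eta(x) = \sum_x \tilde\eta(x) = j$, both the numerator and each term inside the averaging in the denominator lie in $[\phi_{\min,k,l}^j,\, \phi_{\max,k,l}^j]$, which yields \eqref{eqn: RN for klj ln k case}.

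For \textbf{(2)}, the plan is to import a known spectral gap bound for the homogeneous zero-range process and transfer it via (1). Under the growth conditions on $g$, the classical Poincar\'e inequality (Landim-Sethuraman-Varadhan, and subsequent extensions covering the present Lipschitz-type hypothesis) gives a universal constant $C$ with
$${\rm Var}_{\nu_{l,j}}(f) \;\leq\; C(2l+1)^2\, E_{\nu_{l,j}}\bigl[f(-L_l f)\bigr],$$
\emph{uniformly} in $l$ and the particle number $j$. To transfer, first set $a := E_{\nu_{l,j}}[f]$ and apply (1):
$${\rm Var}_{\kappa_{k,l,j}}(f) \;\leq\; E_{\kappa_{k,l,j}}\bigl[(f-a)^2\bigr] \;\leq\; \Bigl(\tfrac{\phi_{\max,k,l}}{\phi_{\min,k,l}}\Bigr)^{\!j}\, {\rm Var}_{\nu_{l,j}}(f).$$
Second, compare Dirichlet forms bond-by-bond: using (1) once more together with the lower bound $\mf p_{x,+}^N \geq r_{k,l,N}^{-1}$,
$$E_{\nu_{l,j}}\bigl[g(\eta(x))(f(\eta^{x,x+1})-f(\eta))^2\bigr] \;\leq\; \Bigl(\tfrac{\phi_{\max,k,l}}{\phi_{\min,k,l}}\Bigr)^{\!j} r_{k,l,N}\, E_{\kappa_{k,l,j}}\bigl[g(\eta(x))\mf p_{x,+}^N (f(\eta^{x,x+1})-f(\eta))^2\bigr].$$
Summing over bonds in $\L_{k,l}$ and chaining the two inequalities produces the bound with the claimed $C_{k,l,j}$.

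For \textbf{(3)}, Lemma \ref{lem: uniform bounds on phi max min} yields $|\phi_{x,N}-\phi_{x+1,N}| \leq (C_2/N)\phi_{\max,N}$ and $\phi_{\max,N}/\phi_{\min,N} \leq C_1$; iterating over the at most $2l$ nearest-neighbor hops within $\L_{k,l}$ and dividing through by $\phi_{\min,k,l} \geq \phi_{\min,N}$ gives
$$\frac{\phi_{\max,k,l}}{\phi_{\min,k,l}} \;\leq\; 1 \,+\, \frac{2l\, C_1 C_2}{N} \;\longrightarrow\; 1$$
uniformly in $k$. The bound \eqref{eqn: q bound} shows $q_x^N/\sqrt N = O(1/N)$ uniformly, so by \eqref{eqn: jump prob left right} and the preceding limit, $\mf p_{x,+}^N \to 1$ uniformly in $x \in \L_{k,l}$, i.e.\ $r_{k,l,N} \to 1$; uniform boundedness of $C_{k,l,j}$ for fixed $l$, $j$ follows at once.

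The main obstacle I anticipate is the uniform-in-density spectral gap invoked in step (2): the constant $C$ there must be independent of $j$, a nontrivial fact for zero-range processes that relies crucially on growth condition (2) on $g$ (and the associated FEM property). Once that input is available, the remainder is elementary bookkeeping using the a priori estimates on $\{\phi_{k,N}\}$ and $\{q_k^N\}$ already established in Lemmas \ref{lem: qx bound} and \ref{lem: uniform bounds on phi max min}.
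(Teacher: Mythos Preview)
Your proposal is correct and follows essentially the same approach as the paper: part (1) via a direct comparison of the product-form weights on $\Omega_{k,l,j}$ (the paper routes this through the grand-canonical ratio $\kappa_{k,l}(\eta)/\nu^\rho_l(\eta)$ with an auxiliary fugacity $\phi_0$, while you write the ratio more compactly as $\prod_x \phi_{x,N}^{\eta(x)}\big/E_{\nu_{l,j}}[\prod_x \phi_{x,N}^{\tilde\eta(x)}]$, but the substance is identical); part (2) by importing the uniform-in-$j$ spectral gap of \cite{LSV} and transferring variance and Dirichlet form via the pointwise bounds from (1); and part (3) from the fugacity increment bound in Lemma~\ref{lem: uniform bounds on phi max min} and the bound on $q_k^N$ in Lemma~\ref{lem: qx bound}. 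Your identification of the uniform-in-density spectral gap as the only nontrivial external input is exactly right.
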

\begin{proof}
Fix an arbitrary $\rho>0$. By the definitions of conditioned measures $\kappa_{k,l,j}$ and $\nu_{l,j}$, we have, for $\eta\in \Omega_{k,l,j}$,
\begin{equation} \label{eqn: ratio computation with total j}
\dfrac{\kappa_{k,l,j} (\eta) }{\nu_{l,j}(\eta)}
=
\dfrac{\kappa_{k,l} (\eta) }{\nu^\rho_{l}(\eta)}
\dfrac{\nu^\rho_{l}(\Omega_{k,l,j})}{\kappa_{k,l} (\Omega_{k,l,j})}.
\end{equation}
The product structure of $\kappa_{k,l}$ and $\nu^\rho_{l}$ allows a direct computation 
\begin{equation}
\label{mu nu prod}
\dfrac{\kappa_{k,l} (\eta) }{\nu^\rho_{l}(\eta)}
=
 \dfrac 
 {\prod_{x\in \L_{k,l}} \big\{  \big( \phi_{x,N}\big)^{\eta(x)} /Z( \phi_{x,N})  \big\} }  
 {\prod_{x\in \L_{k,l}} \big\{ (\phi_0)^{\eta(x)}  /Z(\phi_0) \big\} },
 \end{equation}
 where $\phi_0$ is the common fugacity for (the marginals of) $\nu_\rho$.
Recalling that $\phi_{{\rm min},k,l} = \min_{x\in\L_{k,l}} \phi_{x,N}$ and $\phi_{{\rm max},k,l} = \max_{x\in\L_{k,l}} \phi_{x,N}$, for $\eta\in \Omega_{k,l,j}$, we can estimate $\kappa_{k,l} (\eta) / \nu^\rho_{l}(\eta)$ by
\begin{equation} \label{eqn: spectral gap, pointwise ratio}
\left(\dfrac{\phi_{{\rm min},k,l}}{\phi_0} \right)^j
\prod_{x\in \L_{k,l} } \dfrac{Z(\phi_{x,N})}{Z(\phi_0)}
\leq
\dfrac{\kappa_{k,l} (\eta) }{\nu^\rho_{l}(\eta)}
\leq
\left(\dfrac{\phi_{{\rm max},k,l}}{\phi_0} \right)^j
\prod_{x\in \L_{k,l} } \dfrac{Z(\phi_{x,N})}{Z(\phi_0)}.
\end{equation}
Note that $\kappa_{k,l}(\Omega_{k,l,j}) = \sum_{\eta\in \Omega_{k,l,j}}\big[\kappa_{k,l}(\eta)/\nu^\rho_{l}(\eta)\big]\nu^\rho_l(\eta)$.
Then, $\kappa_{k,l} (\Omega_{k,l,j}) / \nu^\rho_{l}(\Omega_{k,l,j}) $ is estimated by the same bounds as in \eqref{eqn: spectral gap, pointwise ratio}.
Then, rearranging these estimates, \eqref{eqn: RN for klj ln k case} follows from 
\eqref{eqn: ratio computation with total j}.

Turning now to the Poincar\'e inequality, 
the proof relies on the well known spectral gap for one dimensional localized symmetric zero range process (cf. \cite{LSV}):
For all $j$, with respect to a universal constant $C$,
\begin{equation}
\label {eqn: spetral gap for nu}
{\rm Var}_{\nu_{l,j}} (f)
\leq
C (2l+1)^2 
E_{\nu_{l,j}}
 [ f(-L_{l}f) ].
\end{equation}
To get an estimate with respect to $-S_{k,l}$,
from \eqref{d form symm} and \eqref{d form asymm}, using \eqref{eqn: RN for klj ln k case}, we have
\begin{equation} \label {eqn: d from comparison}
E_{\nu_{l,j}}
\left[ f(-L_{l}f) \right]
\leq
r_{k,l,N} \left(\dfrac{\phi_{{\rm max},k,l}}{\phi_{{\rm min},k,l}} \right)^j
E_{\kappa_{k,l,j}}
\left[ f(-S_{k,l}f) \right].
\end{equation}
Now, since
\begin{align*}
{\rm Var}_{\kappa_{k,l,j}}(f)
&=
\inf_a E_{\kappa_{k,l,j}}\big[ (f-a)^2 \big]\\
&\leq
\left(\dfrac{\phi_{{\rm max},k,l}}{\phi_{{\rm min},k,l}} \right)^j
\inf_a E_{\nu_{l,j}}\big[ (f-a)^2 \big]
= 
\left(\dfrac{\phi_{{\rm max},k,l}}{\phi_{{\rm min},k,l}} \right)^j
{\rm Var}_{\nu_{l,j}}(f),
\end{align*}
the desired Poincar\'e inequality follows from \eqref{eqn: spetral gap for nu} and \eqref{eqn: d from comparison}.

As the last claim follows from the second assertion of Lemma \ref
{lem: uniform bounds on phi max min}, the proof to the lemma is
complete.
 \end{proof}

\subsection{Relative entropy}
For $t>0$, let $\mu_t^N$ be the distribution of $\eta_t$. As the entropy production is negative, cf.\,p.~340, \cite{KL}, we have $H(\mu_t^N | \Pam_N) \leq H(\mu^N | \Pam_N)\leq C_0 N$.
Furthermore, the relative entropy of $\mu_t^N$ with respect to the homogeneous invariant measures $\nu_\rho$ is also of order $O(N)$ which will be useful in the sequel.
\begin{Lem} \label{lem: relative entropy wrt uniform}
For any fixed $\rho >0$, there is a constant $C=C(\omega)$ such that 
$H(\mu_t^N | \nu_\rho ) \leq C N$.
\end{Lem}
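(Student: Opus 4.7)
The plan is to compare $H(\mu_t^N | \nu_\rho)$ with the already-controlled quantity $H(\mu_t^N | \Pam_N)$ via the elementary entropy identity
\[
H(\mu_t^N | \nu_\rho) \;=\; H(\mu_t^N | \Pam_N) \;+\; E_{\mu_t^N}\!\Big[\log \frac{d\Pam_N}{d\nu_\rho}\Big],
\]
obtained by writing $d\mu_t^N / d\nu_\rho = (d\mu_t^N / d\Pam_N)(d\Pam_N/d\nu_\rho)$ and expanding the logarithm. The first term is at most $C_0 N$ by the remark immediately preceding the lemma, which combines the initial entropy bound from part (b) of Condition \ref{cond: initial measure} with the nonpositivity of entropy production along the evolution. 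The task is therefore to bound the correction term $E_{\mu_t^N}[\log(d\Pam_N/d\nu_\rho)]$ by $O(N)$ with an $\omega$-dependent constant.

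Exploiting the product structure of both $\Pam_N$ and $\nu_\rho$, and writing $\phi_0$ for the common fugacity of the marginals of $\nu_\rho$,
\[
\log \frac{d\Pam_N}{d\nu_\rho}(\eta) \;=\; \sum_{k=1}^N \eta(k)\, \log\!\frac{\phi_{k,N}}{\phi_0} \;+\; \sum_{k=1}^N \log\!\frac{Z(\phi_0)}{Z(\phi_{k,N})}.
\]
Since $\Pam_N$ is normalized by $\max_k \phi_{k,N} = 1$ and Lemma \ref{lem: uniform bounds on phi max min} provides $\min_k \phi_{k,N} \ge 1/C_1(\omega) > 0$, all fugacities $\phi_{k,N}$ lie in a compact subset of $(0,\infty)$ depending only on $\omega$. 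Continuity of $\log\phi$ and $\log Z(\phi)$ on this compact set then yields uniform bounds on both $|\log(\phi_{k,N}/\phi_0)|$ and $|\log(Z(\phi_0)/Z(\phi_{k,N}))|$; in particular the deterministic sum contributes at most $O(N)$.

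For the stochastic piece, the total particle number is conserved by the generator $L$, so $E_{\mu_t^N}[\sum_k \eta(k)] = E_{\mu^N}[\sum_k \eta(k)]$, which is $O(N)$ by the estimate \eqref{expected_number}. Combined with the uniform bound on $|\log(\phi_{k,N}/\phi_0)|$, this shows the first sum also contributes $O(N)$ in expectation under $\mu_t^N$. Assembling the pieces gives $H(\mu_t^N | \nu_\rho) \le C(\omega) N$. The argument is essentially algebraic; the only nontrivial ingredient is the quenched uniform control of fugacities provided by Lemma \ref{lem: uniform bounds on phi max min}, which is where the $\omega$-dependence enters and constitutes the closest analogue to a main obstacle.
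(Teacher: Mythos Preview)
Your proof is correct and follows essentially the same approach as the paper: both decompose via the entropy identity, compute the Radon--Nikodym derivative $d\Pam_N/d\nu_\rho$ using the product structure, and then invoke the fugacity bounds from Lemma~\ref{lem: uniform bounds on phi max min} together with the $O(N)$ control on the expected particle number from~\eqref{expected_number}. The only cosmetic difference is that you make the conservation-of-mass step (reducing $E_{\mu_t^N}[\sum_k\eta(k)]$ to $E_{\mu^N}[\sum_k\eta(k)]$) explicit, whereas the paper absorbs it silently into the reference to~\eqref{expected_number}.
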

\begin{proof} Write
\[
\begin{split}
H(\mu_t^N | \nu_\rho)
=
\int \ln \left(\dfrac{d\mu_t^N}{d\nu_\rho} \right)d\mu_t^N
=
\int \ln \left(\dfrac{d\mu_t^N}{d\Pam_N} \right)d\mu_t^N
+
\int \ln \left(\dfrac{d\Pam_N}{d\nu_\rho} \right)d\mu_t^N.
\end{split}
\]
The first term on the right-hand side is exactly $H(\mu_t^N | \Pam_N)=O(N)$ by part (2) of Condition \ref{cond: initial measure}. The integrand in the second term equals,
\[
\ln \dfrac{d\Pam_N}{d\nu_\rho} (\eta)
=
\ln \dfrac{\Pam_N(\eta)}{\nu_\rho(\eta)}
=
\sum_{k=1}^N 
\eta(k) \ln \dfrac{\phi_{k,N}}{\phi_0}
+
\sum_{k=1}^N  \ln \dfrac{Z(\phi_0)}{Z(\phi_{k,N})}.
\]
The desired estimate now follows the observations:  $0<c\leq \phi_{k,N}\leq 1$ by 
Lemma \ref{lem: uniform bounds on phi max min}, and the mean expected number of particles, $\int \sum_{k\in \T_N}\eta(k) d\mu^N_t = O(N)$ by \eqref{expected_number}.
\end{proof}

\subsection{$1$-block estimate}
We prove the $1$-block estimate:

\begin{Lem}[$1$-block estimate] 
\label{lem: 1 block}
For every $T>0$,
\[
\limsup_{l\to \infty}
\limsup_{N\to \infty} 
 \E_N
\Big[
\dfrac1N \sum_{1\leq k \leq N }
\Big|
\int_0^T 
 V_{k,l}(s,\eta_s) ds
\Big|
\Big]
=0
\]
where $V_{k,l}(s,\eta)
:=
D_{N,k}^{G,s} 
\left(g(\eta(k))
- 
\Phi(\eta^l(k))\right)$ and $D_{N,k}^{G,s}$ is as in \eqref{DG}.
\end{Lem}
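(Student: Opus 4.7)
My plan is to adapt the classical entropy/Feynman-Kac scheme of Guo-Papanicolaou-Varadhan to the present inhomogeneous setting. Since $|D_{N,k}^{G,s}|\leq C_G$ for a constant $C_G$ depending only on $G$ by \eqref{DG_bound}, I would first pull the deterministic prefactor out and push the absolute value inside the time integral and the $k$-sum, reducing the claim to showing
\[
\limsup_{l\to\infty}\limsup_{N\to\infty}\,\E_N\Big[\int_0^T\frac1N\sum_{k=1}^N F_{k,l}(\eta_s)\,ds\Big]=0,\qquad F_{k,l}(\eta):=\big|g(\eta(k))-\Phi(\eta^l(k))\big|,
\]
with $F_{k,l}$ local in the block $\L_{k,l}$.

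Using the entropy bound $H(\mu^N|\Pam_N)\leq C_0 N$ from Condition \ref{cond: initial measure}(b) together with the entropy inequality and Feynman-Kac, for any $\gamma>0$ the expectation above is bounded by
\[
\frac{C_0}{\gamma}+\frac{T}{\gamma N}\sup_h\Big\{\gamma\sum_{k=1}^N\int F_{k,l}\,h\,d\Pam_N-N^2\,\big\langle\sqrt h,\,-S\sqrt h\big\rangle_{\Pam_N}\Big\},
\]
where $h$ ranges over $\Pam_N$-densities and $S$ is the symmetric part of $L$ (the Dirichlet form identity uses that $\Pam_N$ is invariant). I would then truncate by the indicator $\{\eta^l(k)\leq A\}$: using $g(k)\leq g^*k$, $\Phi(\rho)\leq g^*\rho$, and the $O(N)$ bound \eqref{expected_number} on the total number of particles, the contribution from $\eta^l(k)>A$ vanishes as $A\to\infty$ uniformly in $N$ and $l$.

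For the truncated piece, convexity of the Dirichlet form together with the locality of $F_{k,l}$ allows localization onto each block: setting $h_k:=E_{\Pam_N}[h\mid\mathcal{F}_{\L_{k,l}}]$, the block Dirichlet form $\langle\sqrt{h_k},-S_{k,l}\sqrt{h_k}\rangle_{\kappa_{k,l}}$ is dominated by a fraction of the global one. Decomposing $h_k$ along the particle count $j=\sum_{x\in\L_{k,l}}\eta(x)$ and applying the Poincar\'e inequality of Lemma \ref{spectral gap ln k}(2), a tuned choice $\gamma\sim N/l^2$ absorbs the quadratic penalty and reduces the sup, modulo vanishing remainders, to
\[
\sup_{j\leq A(2l+1)}\;\sup_{1\leq k\leq N}\,E_{\kappa_{k,l,j}}\big[F_{k,l}\big].
\]
On $\Omega_{k,l,j}$ one has $\eta^l(k)\equiv j/(2l+1)=:\rho$, so by \eqref{Phi_eqn} and the equivalence of ensembles for the translation-invariant canonical measure $\nu_{l,j}$---transferred to $\kappa_{k,l,j}$ via the Radon-Nikodym bound \eqref{eqn: RN for klj ln k case} together with Lemma \ref{spectral gap ln k}(3)---this inner expectation vanishes as $N\to\infty$ then $l\to\infty$, uniformly for $\rho\in[0,A]$. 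Letting finally $A\to\infty$ closes the argument.

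The main technical obstacle is precisely the inhomogeneity: since $\Pam_N$ is neither translation invariant nor reversible under $L$, neither the localized spectral gap nor the equivalence of ensembles apply off the shelf. The key inputs that make the scheme work are the quantitative fugacity ratio estimates of Lemma \ref{lem: uniform bounds on phi max min}, propagated through Lemma \ref{spectral gap ln k}, which render the inhomogeneous canonical measures $\kappa_{k,l,j}$ quantitatively comparable to the translation-invariant $\nu_{l,j}$ at the block scale, uniformly in $k$ and $N$.
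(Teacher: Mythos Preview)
Your outline is correct and would close the lemma, but it follows a \emph{different} route from the paper. You run the classical GPV scheme: push the absolute value inside the time integral, apply entropy/Feynman--Kac to the spatial average $\tfrac1N\sum_k F_{k,l}$, localize the global Dirichlet form onto overlapping blocks (paying the $(2l+1)$ overlap factor), condition on the block particle number, and then use the Poincar\'e inequality of Lemma~\ref{spectral gap ln k}(2) to reduce the block variational problem to $E_{\kappa_{k,l,j}}[F_{k,l}]$ up to a remainder of order $\gamma\, C_{k,l,j}\,(2l+1)/N^{2}$; the Radon--Nikodym bound \eqref{eqn: RN for klj ln k case} with Lemma~\ref{spectral gap ln k}(3) then transfers this to $E_{\nu_{l,j}}[F_{k,l}]$, which vanishes by equivalence of ensembles. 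All the inhomogeneity is indeed absorbed through Lemma~\ref{lem: uniform bounds on phi max min}, exactly as you say.

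The paper instead keeps the absolute value \emph{outside} the time integral and treats each site $k$ separately: it first replaces $V_{k,l}$ by the \emph{centered} quantity $V_{k,l,A}=D^{G,s}_{N,k}\big(g(\eta(k))-E_{\kappa_{k,l,\Lambda_{k,l}(\eta)}}[g(\eta(k))]\big)\id_{\{\eta^l(k)\le A\}}$, controls the centering error via \eqref{eqn: RN for klj ln k case} and equivalence of ensembles (your final step, done up front), and then applies a \emph{Rayleigh} estimate (cf.~p.~377 of \cite{KL}) to the mean-zero $V_{k,l,A}$, obtaining a bound $C_0/\gamma + O(\gamma N^{-1})$ that goes to zero already as $N\to\infty$ before $l$ is touched. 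The trade-off: your argument is the textbook one and avoids the Rayleigh machinery and the preliminary centering; the paper's ``local'' argument never needs to share the global Dirichlet form among $N$ overlapping blocks, gives an explicit $O(N^{-1})$ rate uniformly in $k$ for the fluctuation part, and isolates precisely where the equivalence of ensembles enters. Either approach is sufficient here.
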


\begin{proof}
We first introduce a cutoff of large densities. Fix $A>0$, and let 
\[
\tilde V_{k,l,A} (s,\eta)
:=
V_{k,l}(s,\eta) \id_{\{\eta^l (k)\leq A\}}.
\]
Notice that $g(\cdot)$ considered here satisfies the `FEM' assumption
in \cite{KL}.  By Lemma \ref{lem: relative entropy wrt uniform}, the
cutoff follows exactly from Lemma 4.2 in p.90, \cite{KL}.

It now remains to prove that for every $A>0$, $T>0$,
\begin{equation*}
\limsup_{l\to \infty} \limsup_{N\to \infty}
\sup_{1\leq k \leq N}
\E_N
\Big[
\Big |
\int_0^T
 \tilde V_{k,l,A} (s,\eta_s) ds
\Big |
\Big]
=
0.
\end{equation*}

Define $\L_{k,l} (\eta)$ be the number of particles in $\L_{k,l}$,
that is $\L_{k,l} (\eta): = (2l+1) \eta^l(k)$.  As in \cite{FSX}, we
will replace $\tilde V_{k,l,A} (s,\eta)$ by its `centering':
\begin{equation*}
V_{k,l,A}(s,\eta)
:=
D_{N,k}^{G,s} 
\Big\{\, g(\eta(k))
- 
E_{\kappa_{k,l, \L_{k,l}(\eta)}}[g(\eta(k))]
\,\Big\}\, \id_{\{\eta^l (k)\leq A\}}.
\end{equation*}
Note that $E_{\kappa_{k,l,j}}  V_{k,l,A}= 0$ for all $k,l,j$ which will be used in the Rayleigh-type estimation.
The error introduced by such a replacement is less than or equal to
\begin{equation}
\label{step2eqn}
\E_N
\Big[
\int_0^T 
\id_{\{0<\eta^l_s (k)\leq A\}}
\Big |
E_{\kappa_{k,l,\L_{k,l}(\eta_s)}}[g(\eta(k))]
-
\Phi(\eta^l_s(k))
\Big |
ds
\Big].
\end{equation}
Note that $\Phi(\eta^l_s(k)) = E_{\nu_{\eta^l_s(k)}}[g]$.
By triangle inequality, \eqref{step2eqn} is bounded by
\begin{eqnarray*}
&&
\E_N
\Big[
\int_0^T
\id_{\{0<\eta^l_s (k)\leq A\}}
\Big |
E_{\kappa_{k,l, \L_{k,l}(\eta_s)}}[ g(\eta(k))]
-
E_{\nu_{k,l,\L_{k,l}(\eta_s)}}[ g(\eta(k))]
\Big |
ds
\Big]\\
&&+
\E_N
\Big[
\int_0^T
\id_{\{0<\eta_s^l (k)\leq A\}}
\Big|
E_{\nu_{k,l,\L_{k,l}(\eta_s)}} [g(\eta(k))]
-
E_{\nu_{\eta^l_s (k)}}[ g ]
\Big |
ds
\Big]
=:I_1+I_2.
\end{eqnarray*}

Using \eqref{eqn: RN for klj ln k case} and then $g(k) \leq g^* k$, the term $I_1$ is bounded by
\[
\begin{split}
&\E_N
\Big[
\int_0^T
\id_{\{0<\eta^l_s (k)\leq A\}}
E_{\nu_{k,l,\L_{k,l}(\eta_s)}}[g(\eta(k))]  
\Big[\left(\dfrac{\phi_{{\rm max},k,l}}{\phi_{{\rm min},k,l}} \right)^{\L_{k,l}(\eta_s)}  - 1 \Big]
ds
\Big]\\
&\leq
T g^* (2l+1)A \Big[\left(\dfrac{\phi_{{\rm max},k,l}}{\phi_{{\rm min},k,l}} \right)^{(2l+1)A}  - 1 \Big].
\end{split}
\]
Notice that $\sup_{1\leq k\leq N}\dfrac{\phi_{{\rm max},k,l}}{\phi_{{\rm min},k,l}}\to 1$ by Lemma \ref {spectral gap ln k}. Then, for each fixed $l$ and $A$, the term $\sup_{1\leq k\leq N} I_1$ vanishes as $N\uparrow \infty$.  

Now, we turn to estimate the term $I_2$. By equivalence of ensembles (cf.\,p.355, \cite{KL}),  
the absolute value in $I_2$ vanishes as $l\uparrow \infty$, uniformly in $k$.
Therefore, the term $I_2$ vanishes as soon as we take $N\uparrow\infty$, $l\uparrow \infty$ in order.

To prove the lemma, it now remains to show
\begin{equation*}
\limsup_{l\to \infty} \limsup_{N\to \infty}
\sup_{0\leq k \leq N }
\E_N
\Big[
\Big |
\int_0^T
V_{k,l,A} (s,\eta_s) ds
\Big |
\Big]
=
0.
\end{equation*}

By the entropy inequality 
(cf.\,p.338 \cite{KL}) 
and the assumption $H(\mu^N| \Pam_{N})\leq C_0N$,
 we have, for any $\gamma>0$
\begin{equation} \label{eqn: 1block, after entropy inequality}
\begin{split}
\E_N
\Big[
\Big |
\int_0^T
V_{k,l,A} (s,\eta_s) ds
\Big |
\Big]
\leq 
\dfrac{C_0}{\gamma}
+
\dfrac{1}{\gamma N} \ln \E_{\Pam_{N}}
\Big[
\exp
 \left\{
  \gamma N
  \Big |
\int_0^T
V_{k,l,A} (s,\eta_s) ds
\Big |
  \right\}
\Big].
\end{split}
\end{equation}
By Feynman-Kac formula (cf.\,p.336, \cite{KL}),
\eqref{eqn: 1block, after entropy inequality} is bounded further by
\[
\dfrac{C_0}{\gamma}
+
\dfrac 1 {\gamma N} 
\int_0^T \lambda_{N,l}(s) ds
\]
where $\lambda_{N,l}(s)$ is the largest eigenvalue of 
$N^2 S + \gamma N V_{k,l,A}(s,\eta)$.

Now, fix $s\in [0,T]$ and omit the argument $s$ to simplify notation.
To estimate the eigenvalue $\lambda_{N,l}(s)$,
we make use of the variational formula:
\begin{equation*}
(\gamma N)^{-1} \lambda_{N,l}
=
\sup_f
\left\{
E_{\Pam_{N}}\left[  V_{k,l,A} f  \right]
-\gamma^{-1} N
E_{\Pam_{N}}\left [
\sqrt f (-S \sqrt f)
\right]
\right\}
\end{equation*}
where the supremum is over all $f$ which are densities with respect to $\Pam_{N}$.

For any density $f$, we consider its restriction with respect to configurations sites $\L_{k,l}$, i.e.\,we define $f_{k,l}= E_{\Pam_N }\big[f|\Omega_{k,l}\big]$.
Recall that $\kappa_{k,l}$ is the restriction of $\Pam_N $ to $\L_{k,l}$, and that $S_{k,l}$ is the localized generator.  
Notice that $E_{\Pam_{N}}\left [\sqrt f (-S_{k,l}  \sqrt f) \right]\leq E_{\Pam_{N}}\left [\sqrt f (-S \sqrt f) \right]$.
By convexity of the Dirichlet form, we have
\begin{equation*}
(\gamma N)^{-1} \lambda_{N,l}
\leq
\sup_{f_{k,l}}
\left\{
E_{\kappa_{k,l}}\left [ V_{k,l,A} f_{k,l}  \right ]
-\gamma^{-1} N
E_{\kappa_{k,l}}\left [
\sqrt {f_{k,l}} (-S_{k,l} \sqrt {f_{k,l}})
\right]
\right\}.
\end{equation*}

We now write $f_{k,l}  d\kappa_{k,l}$ with respect to sets $\Omega_{k,l,j}$ of configurations with total particle number $j$ on $\L_{k,l}$:
\begin{equation} \label {beta 0, c klj}
E_{\kappa_{k,l}}\left[  V_{k,l,A} f_{k,l}  \right]
=
\sum_{j\geq 0}
c_{k,l,j}(f) \int V_{k,l,A} f_{k,l,j} d\kappa_{k,l,j},
\end{equation}
where
$ c_{k,l,j}(f) = \int_{\Omega_{k,l,j}} f_{k,l} d\kappa_{k,l}$, and 
$ f_{k,l,j} = c_{k,l,j}(f)^{-1} \kappa_{k,l} \left( \Omega_{k,l,j} \right) f_{k,l}$.  Here, $\sum_{j\geq 0} c_{k,l,j} = 1 $ and $f_{k,l,j}$ is a density with respect to $\kappa_{k,l,j}$.

Then, on $\Omega_{k,l,j}$, we have
$
\dfrac{S_{k,l} \sqrt {f_{k,l}}}{\sqrt {f_{k,l}}}
=
\dfrac{S_{k,l} \sqrt {f_{k,l,j}}}{\sqrt {f_{k,l,j}}}.
$
By \eqref{beta 0, c klj}, we write
\begin{equation*}
E_{\kappa_{k,l}}\left[
\sqrt {f_{k,l}} (-S_{k,l} \sqrt {f_{k,l}})
\right]
=
\sum_{j\geq 0}
c_{k,l,j}(f) 
E_{\kappa_{k,l,j}}\left[
\sqrt {f_{k,l,j}} (-S_{k,l} \sqrt {f_{k,l,j}})
\right ].
\end{equation*}
Then, we have
\begin{equation*}
(\gamma N)^{-1} \lambda_{N,l}
\leq
\sup_{0\leq j\leq A(2l+1)}
\sup_f
\left\{
E_{\kappa_{k,l,j}}\left[ V_{k,l,A} f  \right]
-\gamma^{-1} N 
E_{\kappa_{k,l,j}}\left [
\sqrt f (-S_{k,l} \sqrt f)
\right]
\right\},
\end{equation*}
where the inside supremum is on densities $f$ with respect to $\kappa_{k,l,j}$.

By Lemma \ref {spectral gap ln k}, we have $C_{k,l,j}$ as the inverse spectral gap estimate of $S_{k,l}$. 
Note also $\|V_{k,l,A}\|_{\infty} \leq  C(A,G)$.
Using the Rayleigh estimation (cf.\,p.\,377, \cite{KL}), we have
\begin{equation*}
\begin{split}
&E_{\kappa_{k,l,j}}\left[   V_{k,l,A} f  \right]
-\gamma^{-1} N 
E_{\kappa_{k,l,j}}\left [
\sqrt f (-S_{k,l} \sqrt f)
\right ]\\
\leq&
\dfrac{\gamma N^{-1}}{1-2C(A,G) C_{k,l,j}\,\gamma N^{-1}}
E_{\kappa_{k,l,j}}\left[
  V_{k,l,A}  (-S_{k,l} )^{-1}
  V_{k,l,A}
\right ].
\end{split}
\end{equation*}

As remarked in the beginning of the proof, $E_{\kappa_{k,l,j}}  V_{k,l,A}= 0$.
Observe that the spectral gap estimate of $S_{k,l}$ in Lemma \ref{spectral gap ln k} also implies that $\|S_{k,l}^{-1}\|_2$, the $L^2(\kappa_{k,l,j})$ norm of the operator $S_{k,l}^{-1}$ on 
mean zero functions, is less than or equal to $C_{k,l,j}$. 
Thus, by Cauchy-Schwarz, we have 
\begin{equation*} 
\begin{split}
E_{\kappa_{k,l,j}}\left[
  V_{k,l,A}  (-S_{k,l} )^{-1}
  V_{k,l,A}
\right ]
\leq
C_{k,l,j}
E_{\kappa_{k,l,j}}\left[
  V_{k,l,A} ^2
\right ].
\end{split}
\end{equation*}

Retracing the steps, we obtain
\begin{equation*}
\begin{split}
\E_N
\Big[
\Big |
\int_0^T
 V_{k,l,A} (\eta_s) ds
\Big |
\Big]
\leq 
\dfrac{C_0}{\gamma}
+
\sup_{0\leq j\leq A(2l+1)}
\dfrac{T\gamma N^{-1} C_{k,l,j}}{1-2C(A,G) C_{k,l,j}\,\gamma N^{-1}}
E_{\kappa_{k,l,j}}\left [
  V_{k,l,A} ^2
\right ]
\end{split}
\end{equation*}
where the last expression vanishes uniformly as $N\to \infty$ 
for $1\leq k \leq N$ and $j\leq A(2l+1)$.
The lemma now is proved by letting $\gamma \to \infty$.
\end{proof}

%%%%%%%%%%%%%%%%%%%%%%%%%%%
%%%%%%%%%%%%%%%%%%%%%%%%%%%
%%%%%%%%%%%%%%%%%%%%%%%%%%%
%%%%%%%%%%%%%%%%%%%%%%%%%%%
%%%%%%%%%%%%%%%%%%%%%%%%%%%
%%%%%%%%%%%%%%%%%%%%%%%%%%%
\section{$2$-block estimate}
In this section we discuss $2$-block estimate which is needed for the replacement lemma.  For brevity, we present the main elements, but omit some proofs, as they are similar to those for the $1$-block estimate.

As for the $1$-block estimate, a spectral gap bound will be needed in the comparison of two `blocks'.
Recall the notation $\L_{k,l}$ from the $1$-block estimate.  For $l\geq 1$ and $|k-k'|>2l$, let  $\L_{k,k',l} = \L_{k,l} \cup \L_{k',l}$.
We introduce the following localized generator $S_{k,k',l}$ governing the coordinates $\Omega_{k,k',l} = \N_0^{\L_{k,k',l}}$.
Inside each block, the process moves as before, but we add an extra bond interaction between sites $k+l$ and $k'-l$:
\begin{align*}
S_{k,k',l}f(\eta)= S_{k,l}f(\eta) + & S_{k',l}f(\eta)
+
\dfrac12  \, g(\eta(k+l))
\mf p_{k+l,k'-l}^N 
\big(f(\eta^{k+l,k'-l}) - f(\eta) \big)\\
&+
\dfrac12 \, g(\eta(k'-l)) \, \mf p_{k'-l,k+l}^N
\big(f(\eta^{k'-l,k+l}) - f(\eta) \big)
\end{align*}
where
\[
\mf p_{k+l,k'-l}^N=
 \dfrac12 + \dfrac{q_{k+l}^N}{\sqrt N}
+
\dfrac{\phi_{k'-l,N}}{\phi_{k+l,N}}
\big( \dfrac12 - \dfrac{q_{k'-l}^N}{\sqrt N}\big),\quad
\mf p_{k'-l,k+l}^N
=
\dfrac12 - \dfrac{q_{k'-l}^N}{\sqrt N}
+
\dfrac{\phi_{k+l,N}}{\phi_{k'-l,N}}
\big( \dfrac12 + \dfrac{q_{k+l}^N}{\sqrt N}\big).
\]
As before, the localized measure $\kappa_{k,k',l}$ defined by $\kappa=\Pam_N$ limited to sites in $\L_{k,k',l}$, 
as well as the canonical measure $\kappa_{k,k',l,j}$ on $\Omega_{k,k',l,j}: = \{\eta\in \Omega_{k,k',l}: \sum_{x\in \L_{k,k',l}} \eta(x) = j\}$, that is $\kappa_{k,k',l}$ is conditioned so that there are exactly $j$ particles counted in $\Omega_{k,k',l}$, are both invariant and reversible for the dynamics.

The corresponding Dirichlet form, with measure $\tilde\kappa$ given by $\kappa_{k,k',l}$ or $\kappa_{k,k',l,j}$, is given by
\begin{equation*}
\begin{split}
E_{\tilde \kappa} \left[ f(-S_{k,k',l} f) \right ]
 =&
\dfrac12  \, \sum_{x,x+1\in \L_{k,k',l}}
 E_{\tilde\kappa} \Big[
g(\eta(k)) \, \mf p_{k,+}^N
\big(f(\eta^{x,x+1}) - f(\eta) \big)^2
\Big]\\
 &+
 \dfrac12  E_{\tilde\kappa} \Big[
g(\eta(k+l))
\mf p_{k+l,k'-l}^N
\big(f(\eta^{k+l,k'-l}) - f(\eta) \big)^2
 \Big].
\end{split}
\end{equation*}

Recall also the generator of symmetric zero-range $L_l$ with respect to $\Lambda_{k,l}$ (cf.~below \eqref{d form asymm}).
Let $L'_l$ be the same generator with respect to $\Lambda_{k',l}$.    Define the generator $L_{l,l}$ with respect to $\L_{l,l}$ by
\begin{align*}
L_{l,l}f(\eta) &= L_l f(\eta) + L'_l f(\eta)
+ \frac{1}{2}\left[f\big(\eta^{k+l, k'-l}\big) - f(\eta)\right]g(\eta(k+l)) \\
&\qquad\qquad\qquad\qquad+\frac{1}{2}\left[f\big(\eta^{k'-l,k+l}\big) - f(\eta)\right]g(\eta(k'-l)).
\end{align*}
When $|k-k'|$ is large, the process governed by $L_{l,l}$ in effect treats the blocks as adjacent, with a connecting bond.

Corresponding to the set-up of the gap bound Lemma \ref{spectral gap ln k},
let $\nu^\rho_{l,l}$ be the product of $4l+2$ distributions with common marginal $\rho$.  One may inspect that $\nu^\rho_{l,l}$ is invariant to the dynamics generated by $L_{l,l}$.
Let now $\nu_{l,l,j}$ be $\nu^\rho_{l,l}$ conditioned on that the total number of particles in the $4l+2$ sites is $j$.
Note that $\nu_{l,l,j}$ is independent of $\rho$.
This canonical measure $\nu_{l,l,j}$ is also invariant to the dynamics.  The corresponding Dirichlet form is given by
\begin{equation*}
\begin{split}
E_{\nu_{l,l,j}} \left[ f(-L_{l,l} f) \right ]
=&\ 
\sum_{x,x+1\in \L_{k,k',l}}
 E_{\nu_{l,l,j}} \Big[ \frac12
g(\eta(x))
 [  f\left(\eta^{x,x+1}  \right)   - f(\eta) ]^2
 \Big]\\
 &+
 E_{\nu_{l,l,j}} \Big [ \frac12
g(\eta(k'-l))
\left [  f\left(\eta^{k'-l,k+l}  \right)   - f(\eta) \right ]^2
 \Big ].
\end{split}
\end{equation*}

Let $ r_{k,k',l,N}^{-1} :=
\min\big\{
\mf p_{k+l,k'-l}^N,
\min_{x,x+1\in \L_{k,k',l}}
\left\{
\mf p_{x,+}^N
\right\}
\big\} $.

\begin{Lem} \label {spectral gap of 2 blocks}
We have the following estimates:
\begin{enumerate}
\item Uniform bound:  For all $\eta\in \Omega_{k,k', l, j}$, we have
\begin{equation*}
\left(\dfrac{\phi_{{\rm min},k,k',l}}{\phi_{{\rm max},k,k',l}} \right)^j
\leq
\dfrac{\kappa_{k,k',l,j} (\eta) }{\nu_{l,l,j}(\eta)}
\leq
\left(\dfrac{\phi_{{\rm max},k,k',l}}{\phi_{{\rm min},k,k',l}} \right)^j
\end{equation*}
 where 
$\dsp \phi_{{\rm min},k,k',l} = \min_{x\in\L_{k,k',l}} \phi_{x,N}$ and $\dsp \phi_{{\rm max},k,k',l} = \max_{x\in\L_{k,k',l}} \phi_{x,N}$.

\item Poincar\'e inequality:
For fixed $j\geq 0$ and $k,k'$ such that $|k-k'|>2l+1$,  we have
\begin{equation*} 
{\rm Var}_{\kappa_{k,k',l,j}}(f)
\leq
C_{k,k',l,j} E_{\kappa_{k,k',l,j}} \big [ f(-L_{k,k',l}f) \big ]
\end{equation*}
where
$
C_{k,k',l,j}
=
C
(4l+2)^2  r_{k,k',l,N} 
\left(\dfrac{\phi_{{\rm max},k,k',l}}{\phi_{{\rm min},k,k',l}} \right)^{2j}
$
for a universal constant $C$.

\item 
There exists constant $C_0$ such that
\[
\sup_{k,k',l,N}\dfrac{\phi_{{\rm max},k,k',l}}{\phi_{{\rm min},k,k',l}}  \leq C_0.
\quad
\limsup_{N\uparrow\infty}\sup_{k',k,l}r_{k,k',l,N} \leq C_0.
\]
Hence, for fixed $j$ and $l$, we have
$\dsp \limsup_{\eps \downarrow 0}\limsup_{N\uparrow\infty} 
 \sup_{2l+1\leq |k'-k| \leq \eps  N}
C_{k,k',l,j}<\infty$.
\end{enumerate}
\end{Lem}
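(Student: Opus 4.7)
The plan is to mimic closely the three-step proof of Lemma \ref{spectral gap ln k}, adapting each step to accommodate the extra connecting bond between sites $k+l$ and $k'-l$ present in both $L_{l,l}$ and $S_{k,k',l}$.

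For part (1), I would decompose
\[
\dfrac{\kappa_{k,k',l,j}(\eta)}{\nu_{l,l,j}(\eta)}
= \dfrac{\kappa_{k,k',l}(\eta)}{\nu^\rho_{l,l}(\eta)}
\cdot \dfrac{\nu^\rho_{l,l}(\Omega_{k,k',l,j})}{\kappa_{k,k',l}(\Omega_{k,k',l,j})},
\]
exactly as in \eqref{eqn: ratio computation with total j}, use the product structure to write the first factor explicitly in analogy with \eqref{mu nu prod}, and exploit $\sum_x \eta(x) = j$ to pinch it between $(\phi_{{\rm min},k,k',l}/\phi_0)^j$ and $(\phi_{{\rm max},k,k',l}/\phi_0)^j$ times a common $\prod_x Z(\phi_{x,N})/Z(\phi_0)$ factor. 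Averaging produces the reciprocal bounds for the second factor, and the arbitrary $\phi_0$ together with the $Z$-factors cancel, yielding the claimed two-sided density bound.

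For part (2), the crucial input is a spectral gap for the homogeneous dynamics on $\Omega_{k,k',l,j}$ generated by $L_{l,l}$. Since the extra bond $(k+l, k'-l)$ connects $\L_{k,l}$ and $\L_{k',l}$, and the resulting dynamics is isomorphic to a symmetric one-dimensional zero-range process on a path of $4l+2$ sites, the LSV spectral gap \cite{LSV} applies uniformly in $j$ to give ${\rm Var}_{\nu_{l,l,j}}(f) \le C(4l+2)^2 E_{\nu_{l,l,j}}[f(-L_{l,l}f)]$ for a universal $C$. Combining this with a term-by-term Dirichlet-form comparison (including the connecting bond, from which $r_{k,k',l,N}^{-1}$ emerges as a lower bound across the relevant probabilities $\mf p_{x,+}^N$ and $\mf p_{k+l,k'-l}^N$) and the pointwise density bound from part (1) yields
\[
E_{\nu_{l,l,j}}\!\big[f(-L_{l,l}f)\big] \le r_{k,k',l,N}\Big(\dfrac{\phi_{{\rm max},k,k',l}}{\phi_{{\rm min},k,k',l}}\Big)^{\!j} E_{\kappa_{k,k',l,j}}\!\big[f(-S_{k,k',l}f)\big],
\]
in analogy with \eqref{eqn: d from comparison}. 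The variance comparison ${\rm Var}_{\kappa_{k,k',l,j}}(f)\le (\phi_{{\rm max},k,k',l}/\phi_{{\rm min},k,k',l})^j\,{\rm Var}_{\nu_{l,l,j}}(f)$ follows from (1) via $\inf_a E_\kappa[(f-a)^2]$, closing the Poincar\'e inequality with the stated constant.

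Part (3) becomes routine once (1) and (2) are in hand: the bound $\phi_{{\rm max},k,k',l}/\phi_{{\rm min},k,k',l}\le \phi_{{\rm max},N}/\phi_{{\rm min},N}\le C_1$ is immediate from Lemma \ref{lem: uniform bounds on phi max min}, while each jump probability in \eqref{eqn: jump prob left right} and in the formulas for $\mf p_{k+l,k'-l}^N, \mf p_{k'-l,k+l}^N$ is bounded below uniformly for large $N$ by combining $|q_\cdot^N|\le C/\sqrt N$ from Lemma \ref{lem: qx bound} with the fugacity ratio bound from Lemma \ref{lem: uniform bounds on phi max min}; this controls $r_{k,k',l,N}$ and yields the uniform bound on $C_{k,k',l,j}$ for fixed $j,l$, from which the $\limsup_\e\limsup_N$ assertion follows. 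The main obstacle I anticipate is the spectral gap estimate for $L_{l,l}$: morally one reduces to the one-dimensional LSV bound on a path of $4l+2$ sites, but one must verify carefully that the connecting bond makes the dynamics isomorphic to zero-range on such a path, so that the $(4l+2)^{-2}$ scaling is valid regardless of the actual torus distance $|k-k'|$.
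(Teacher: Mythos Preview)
Your proposal is correct and follows essentially the same approach as the paper, which simply states that one may repeat the proof of Lemma \ref{spectral gap ln k} step by step and omits the details. Your outline fills in those details faithfully, including the key observation that the extra bond makes $L_{l,l}$ isomorphic to symmetric zero-range on a path of $4l+2$ sites so that the LSV gap applies, and the use of the global ratio bound $\phi_{\max,N}/\phi_{\min,N}\le C_1$ from Lemma \ref{lem: uniform bounds on phi max min} in part (3) in place of the sharper $\to 1$ statement available in the one-block case.
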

We may repeat proof of Lemma \ref{spectral gap ln k}, step by step, to show Lemma \ref{spectral gap of 2 blocks}; to be brief, we omit the details.

%%%%%%%%%%%%%%%%%%%%%%%%%%%%%%%%%%%%%%%%%%%%%%%%%%%%%%%%
%%%%%%%%%%%%%%%%%%%%%%%%%%%%%%%%%%%%%%%%%%%%%%%%%%%%%%%%

%%%%%%%%%%%%%%%%%%%%%%%%%%%%%%%%%%%%%%%%%%%%%%%%%
%%%%%%%%%%%%%%%%%%%%%%%%%%%%%%%%%%%%%%%%%%%%%%%%%

We now state 
a $2$-blocks estimate.

\begin{Lem}[$2$-block estimate] \label{lem: 2 blocks}
 We have
 \begin{equation} \label {eqn: 2 blocks estimate}
\limsup_{l\to \infty}
\limsup_{\eps\to 0} 
\limsup_{N\to \infty} 
 \E_N
\Big[
\dfrac1N \sum_{1\leq k \leq N }
\int_0^T 
\Big|
\Phi\left(\eta_{s}^{l}(k)\right)
-
\Phi\left(\eta_{s}^{\eps N}(k)\right)
\Big|
ds
\Big]
=0.
\end{equation}
\end{Lem}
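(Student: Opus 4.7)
The plan is to adapt the 1-block argument of Lemma \ref{lem: 1 block} to the two-block setting, with the spectral gap now coming from Lemma \ref{spectral gap of 2 blocks} and the environment regularity from Lemma \ref{lem: uniform bounds on phi max min}.

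By the Lipschitz bound $0<C_1\leq \Phi'\leq C_2$, it suffices to prove \eqref{eqn: 2 blocks estimate} with $\eta^l_s(k)-\eta^{\eps N}_s(k)$ in place of the $\Phi$-differences. For $l<\eps N$, one has, up to boundary contributions whose $L^1$ mass is $O(l/(\eps N))$,
\begin{equation*}
\eta^{\eps N}(k)-\eta^l(k)=\frac{1}{2\eps N+1}\sum_{|y|\leq \eps N}\big[\eta^l(k+y)-\eta^l(k)\big]+O\big(l/(\eps N)\big),
\end{equation*}
so, after the triangle inequality, it is enough to show
\begin{equation*}
\limsup_{l\to\infty}\limsup_{\eps\to 0}\limsup_{N\to\infty}\sup_{2l+1<|y|\leq \eps N}\frac1N\sum_{k=1}^N \E_N\int_0^T\big|\eta_s^l(k+y)-\eta_s^l(k)\big|\,ds = 0;
\end{equation*}
the overlapping range $|y|\leq 2l+1$ can be absorbed into an enlarged 1-block estimate on a block of size at most $4l+2$. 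One then introduces a density cutoff $\id_{\{\eta^l(k)+\eta^l(k+y)\leq A\}}$, the complementary large-density part being handled by the entropy bound in Condition \ref{cond: initial measure}(b) and the FEM argument as in the proof of Lemma \ref{lem: 1 block}.

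Setting $V_{k,y,l,A}(\eta):=[\eta^l(k+y)-\eta^l(k)]\id_{\{\eta^l(k)+\eta^l(k+y)\leq A\}}$ and $j=\sum_{x\in\L_{k,k+y,l}}\eta(x)\leq A(4l+2)$, decompose on $\Omega_{k,k+y,l,j}$:
\begin{equation*}
V_{k,y,l,A}=\big(V_{k,y,l,A}-E_{\kappa_{k,k+y,l,j}}[V_{k,y,l,A}]\big)+E_{\kappa_{k,k+y,l,j}}[V_{k,y,l,A}].
\end{equation*}
The centered piece is handled by the Lemma \ref{lem: 1 block} machinery: entropy inequality against $\Pam_N$, Feynman-Kac, convexity of the Dirichlet form to replace $S$ by the localized $S_{k,k+y,l}$, decomposition of the variational sup over densities $f$ by $j$, and Rayleigh's estimation using the spectral gap constant $C_{k,k+y,l,j}$ from Lemma \ref{spectral gap of 2 blocks}(2). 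By Lemma \ref{spectral gap of 2 blocks}(3), $\sup_{2l+1<|y|\leq\eps N}C_{k,k+y,l,j}$ remains bounded as $N\to\infty$ then $\eps\to 0$ for fixed $l,j,A$, producing a bound of order $C_0/\gamma+T\gamma C_{k,k+y,l,j}N^{-1}\|V_{k,y,l,A}\|_\infty^2/(1-o(1))$ which vanishes as $N\to\infty$ and then $\gamma\to\infty$.

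The main obstacle is the bias $E_{\kappa_{k,k+y,l,j}}[V_{k,y,l,A}]$: because the environment is inhomogeneous, $\kappa_{k,k+y,l,j}$ does not share the $k\leftrightarrow k+y$ block symmetry, whereas $V_{k,y,l,A}$ is antisymmetric under that swap. Under the homogeneous canonical measure $\nu_{l,l,j}$ the swap is measure-preserving, so $E_{\nu_{l,l,j}}[V_{k,y,l,A}]=0$; by Lemma \ref{spectral gap of 2 blocks}(1),
\begin{equation*}
\Big\|\frac{d\kappa_{k,k+y,l,j}}{d\nu_{l,l,j}}-1\Big\|_\infty\leq \Big(\frac{\phi_{{\rm max},k,k+y,l}}{\phi_{{\rm min},k,k+y,l}}\Big)^{j}-1.
\end{equation*}
The second assertion of Lemma \ref{lem: uniform bounds on phi max min} gives $\phi_{{\rm max},k,k+y,l}-\phi_{{\rm min},k,k+y,l}\leq C\eps$ whenever $|y|\leq\eps N$, while its first assertion yields $\phi_{{\rm min},k,k+y,l}\geq c_0>0$; hence the right-hand side is at most $(1+C'\eps)^{A(4l+2)}-1$, which tends to $0$ as $\eps\to 0$ for fixed $l,A$. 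Since $\|V_{k,y,l,A}\|_\infty\leq 2A$, the bias is $o(1)$ in the iterated limit. Combining both contributions and taking $N\to\infty$, $\eps\to 0$, $l\to\infty$ in that order yields the result.
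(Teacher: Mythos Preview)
Your overall plan is sound, and your refined Radon--Nikodym estimate $(1+C'\eps)^{A(4l+2)}-1\to 0$ as $\eps\to0$ is precisely the environment regularity that drives the two-block argument in this inhomogeneous setting. There are, however, two genuine gaps.

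\textbf{The absolute value.} The quantity to control is $\E_N\int_0^T|\eta_s^l(k+y)-\eta_s^l(k)|\,ds$, with the absolute value \emph{inside} the time integral. You drop the absolute value in defining $V_{k,y,l,A}$ so as to exploit the block-swap antisymmetry and obtain $E_{\nu_{l,l,j}}[V]=0$. But the entropy/Feynman--Kac/Rayleigh scheme applied to $\pm V$ only controls $\E_N\big|\int_0^T V\,ds\big|$, not $\E_N\int_0^T|V|\,ds$. If instead you run the scheme on $|V|$, the residual bias is $E_{\kappa_{k,k+y,l,j}}[|V|]$, and antisymmetry says nothing about it. The paper keeps the absolute value throughout and controls $E_{\nu_{l,l,j}}\big[|\eta^l(k)-\eta^l(k')|\big]$ by equivalence of ensembles, obtaining $O(l^{-1/2})$; it is this step, not any bias cancellation, that the final $l\to\infty$ limit actually exploits. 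Your Radon--Nikodym bound then transfers this to $\kappa_{k,k+y,l,j}$.

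\textbf{The Dirichlet form comparison.} The localized generator $S_{k,k+y,l}$ contains the extra bond between $k+l$ and $k+y-l$, which is \emph{not} a bond of $S$, so ``convexity of the Dirichlet form'' alone does not yield $E_{\Pam_N}[\sqrt f(-S\sqrt f)]\geq E_{\Pam_N}[\sqrt f(-S_{k,k+y,l}\sqrt f)]$. One must telescope the long jump into nearest-neighbor moves (the moving-particle argument), paying a factor $|y|\leq \eps N$; the paper records this as $E_{\Pam_N}[\sqrt f(-S_{k,k',l}\sqrt f)]\leq C(1+\eps N)\,E_{\Pam_N}[\sqrt f(-S\sqrt f)]$. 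Consequently the Rayleigh contribution is of order $\gamma\eps$, not $\gamma N^{-1}$ as you state, and it is the $\eps\to0$ limit (not $N\to\infty$) that removes it.
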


%%%%%%%%%%%%%%%%%%%%%%%%%%%%%%%%%%%%%%%%%%%
%%%%%%%%%%%%%%%%%%%%%%%%%%%%%%%%%%%%%%%%%%%

  \begin{proof}
 As $\Phi(\cdot)$ is Lipschitz, it suffices to show \eqref{eqn: 2 blocks estimate} with the absolute value replaced by $\left |\eta_s^{\theta  N}(k) - \eta_s^l(k) \right |$. We will further approximate  $\eta_s^{\theta  N}(k)$ by $\dfrac{1}{2\theta N+1} \sum_{k'}\eta_s^{l}(k')$ where the summation is over $k'$-s such that $2l+1\leq |k'-k|\leq \theta N$.
By the entropy inequality, the resulting error vanishes as $N\uparrow \infty$.
For each pair $(k, k')$, we view $\L_{k,k',l}$, the union of the two blocks $\L_{k,l}$ and $ \L_{k',l}$, as a single block. 
Let $\eta_s^{l} (k,k')$ denote the average number of particles per site over $\L_{k,k',l}$. After a cutoff of large densities (cf.\,p.~92, \cite{KL}), to prove the lemma, it is enough to show, for any $A>0$, that
\begin{equation*}
\limsup_{l\to \infty}
\limsup_{\theta \to 0}
 \limsup_{N\to \infty}
  \sup_{ 2l+1\leq |k'-k| \leq \theta N }
\E_N
\Big[
\int_0^T 
\Big |
\eta_s^{l}(k')
-
\eta_s^l(k)
 \Big |
 \id_{\{\eta_s^{l} (k,k') \leq A \}}
ds
\Big]
=
0.
\end{equation*}
Let $V_{k,k',l,A}(\eta) := \left| \eta^l (k) -  \eta^l(k') \right| \id_{\{\eta^l(k,k')\leq A\}}$.
Following the proof of Lemma \ref{lem: 1 block}, for fixed $l,\theta,N,k,k'$, in order to estimate
$\E_N \Big[ \int_0^T  V_{k,k',l,A} (\eta_s) ds \Big]$, it suffices to bound
\[
(\gamma N)^{-1} \lambda_{N,l}
=
\sup_f
\left\{
E_{\Pam_{N}} \left [  V_{k,k',l,A} f  \right ]
-\gamma^{-1} N 
E_{\Pam_{N}} \left [
\sqrt f (-S \sqrt f)
\right ]
\right\}
\]
where the supremum is over all $f$ which are densities with respect to $\Pam_{N}$. To make use of spectral gap estimates established in Lemma \ref{spectral gap of 2 blocks}, we will need to compare $E_{\Pam_{N}} \left [
\sqrt f (-S \sqrt f) \right ]$ with $E_{\Pam_{N}} \left [ \sqrt f (-S_{k,k',l} \sqrt f) \right ]$, that is the Dirichlet form corresponding to $S$ restricted on $\L_{k,k',l}$ with an extra bond connecting $\L_{k,l}$ and $\L_{k',l}$. By a careful examination, we have, for some constant $C$, that
$
E_{\Pam_{N}} \left [
\sqrt f (-S_{k,k',l}\sqrt f)
\right ]
\leq
C (1+\theta N) 
E_{\Pam_{N}} \left [
\sqrt f (-S \sqrt f)
\right ].
$

We may now resume the scheme of Lemma \ref{lem: 1 block} and proceed until the end. Notice that for a successful application of Rayleigh estimate, we need to have that $E_{\kappa_{k,k',l,j}}\left[  V_{k,k',l,A}   \right ]$ vanishes where $\kappa_{k,k',l,j}$ is $\Pam_N$ conditioned on configurations with exactly $j$ particles over $\L_{k,k',l}$.
 In fact, by Lemma \ref{spectral gap of 2 blocks},
$E_{\kappa_{k,k',l,j}}\left [ V_{k,k',l,A}   \right ]
\leq {C_0}^j E_{\nu_{l,l,j}}\left [  V_{k,k',l,A}   \right ]$.
As $\nu_\rho$ is product measure with a common marginal independent of $N$, the term $E_{\nu_{l,l,j}}\left [  V_{k,k',l,A}   \right ]$ does not depend on $N$ or $\theta$.   By adding and subtracting $\rho_{j,l}:= j/(2(2l+1))$, we only need to bound $E_{\nu_{l,l,j}}\left[\big |\eta^l(k)-\rho_{j,l} \big|\right]$.  By an equivalence of ensemble estimate (cf.~p.~355 \cite{KL}), we have $E_{\nu_{l,l,j}}\left [  \big|\eta^l(k) -\rho_{j,l} \big| ^2  \right ] \leq C(A){\rm Var}_{\nu^{\rho_{j,l}}_{l,l}}\left( \eta^l(k)\right)$ (recall $\nu^\rho_{l,l}$ defined before Lemma \ref{spectral gap of 2 blocks}). 
Note that the variance is of order $O(l^{-1})$ as the single site variance ${\rm Var}_{\nu^{\rho_{j,l}}_{l,l}}\left(\eta(k)\right)$ is uniformly bounded for $\rho_{j,l} \leq A$. Hence, $E_{\nu_{l,l,j}}\left [  V_{k,k',l,A}   \right ]$ is of order $O(l^{-1/2})$, finishing the proof.  
\end{proof}

%%%%%%%%%%%%%%%%%%%%%%%%%%%
%%%%%%%%%%%%%%%%%%%%%%%%%%%
%%%%%%%%%%%%%%%%%%%%%%%%%%%
%%%%%%%%%%%%%%%%%%%%%%%%%%%
%%%%%%%%%%%%%%%%%%%%%%%%%%%
%%%%%%%%%%%%%%%%%%%%%%%%%%%
%%%%%%%%%%%%%%%%%%%%%%%%%%%
%%%%%%%%%%%%%%%%%%%%%%%%%%%
%%%%%%%%%%%%%%%%%%%%%%%%%%%

\section{Tightness of limit measures} \label {section: tightness}
In this section, we obtain tightness of the family of probability measures $\left\{Q^N\right\}_{N\in \N}$ on the trajectory space $D([0,T],\Mb )$.
We show that $\{Q^N\}$ is tight with respect to the uniform topology, stronger than the Skorokhod topology on $D([0,T], \Mb)$.

\begin{Lem} \label {lem: tightness}
$\left\{Q^N\right\}_{N\in \N}$ is relatively compact with respect to the uniform topology.  As a consequence, all limit points $Q$ are supported on weakly continuous trajectories $\pi$, that is for $G\in C^\infty(\T)$ we have $t\in [0,T]\mapsto\langle G, \pi_t\rangle$ is continuous.
\end{Lem}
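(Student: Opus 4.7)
The plan is to reduce the problem to real-valued tightness via the test-function projections $\pi \mapsto \langle G,\pi\rangle$, estimate each piece of the martingale decomposition from Section \ref{section: martingale}, and then upgrade from Skorokhod tightness to uniform tightness by observing that individual jumps of $\pi^N_\cdot$ vanish.

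First, I would invoke the standard criterion (see \cite{KL}, Chapter 4) that $\{Q^N\}$ is tight on $D([0,T],\Mb)$ with the Skorokhod topology iff, for every $G$ in a countable dense subset of $C^\infty(\T)$, the laws of the real-valued processes $\{\langle G,\pi^N_\cdot\rangle\}_N$ are tight in $D([0,T],\R)$, together with a tightness condition on the initial measures. The latter follows because $\pi^N_0$ lives in $\{\pi \in \Mb : \pi(\T) \leq M\}$ with probability close to $1$ for $M$ large, by \eqref{expected_number}. It then suffices to verify Aldous' criterion for each fixed $G$.

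For a fixed smooth $G$, I would use the decomposition from Section \ref{section: martingale},
\[
\langle G,\pi^N_t\rangle - \langle G,\pi^N_s\rangle
\;=\; M^{N,G}_t - M^{N,G}_s
\;+\; \int_s^t \frac{1}{N}\sum_{k=1}^N D^{G,u}_{N,k}\, g(\eta_u(k))\,du .
\]
By the uniform bound \eqref{DG_bound} and $g(n)\le g^* n$, the drift term is dominated by
\[
\Big|\int_s^t N^2L\langle G,\pi^N_u\rangle\,du\Big|
\;\le\; C(G)(t-s)\cdot \frac{1}{N}\sum_{k=1}^N \eta_u(k),
\]
and by conservation of total mass under the dynamics and \eqref{expected_number}, the random factor has expectation bounded uniformly in $N$. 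Hence for any stopping time $\tau\le T$ and $\delta>0$,
\[
\P_N\Big[\,\Big|\int_\tau^{(\tau+\delta)\wedge T} N^2L\langle G,\pi^N_u\rangle du\Big|>\eta\Big]
\;\le\; \frac{C(G)\,\delta}{\eta},
\]
which goes to $0$ as $\delta\downarrow 0$, uniformly in $N$. For the martingale part, Doob's inequality together with Lemma \ref{lem: martingale bounds} gives
\[
\E_N\Big[\sup_{0\le t\le T}|M^{N,G}_t|^2\Big]
\;\le\; 4\,\E_N\langle M^{N,G}\rangle_T
\;\le\; \frac{C'(G)}{N} \;\to\; 0,
\]
so $M^{N,G}_\cdot$ converges to zero uniformly in probability, and trivially satisfies Aldous' criterion. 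Adding the two contributions yields tightness of $\{\langle G,\pi^N_\cdot\rangle\}_N$ in $D([0,T],\R)$ for each $G$, whence tightness of $\{Q^N\}$ on $D([0,T],\Mb)$ with the Skorokhod topology.

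Finally, to upgrade to the uniform topology and obtain weakly continuous limit trajectories, I would observe that each elementary jump of the zero-range process moves one particle by one site, so for smooth $G$,
\[
\sup_{0\le t\le T}\big|\langle G,\pi^N_t\rangle - \langle G,\pi^N_{t-}\rangle\big|
\;\le\; \frac{2\,\|G\|_\infty}{N}\;+\;\frac{\|\partial_x G\|_\infty}{N^2}
\;\longrightarrow\; 0
\]
deterministically. Combined with Skorokhod tightness, this is the standard criterion (see \cite{KL}, Proposition 4.1.17) that $\{Q^N\}$ is tight in the uniform topology and every limit point $Q$ is concentrated on paths $\pi$ for which $t\mapsto \langle G,\pi_t\rangle$ is continuous, first for $G$ in the countable dense set and then, by a routine $\|\cdot\|_\infty$ approximation and the uniform mass bound $\pi_t(\T)\le M$, for all $G\in C(\T)$. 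The main subtlety in this program is the uniform-topology upgrade, but it reduces to the elementary observation above about the size of single jumps.
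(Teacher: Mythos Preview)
Your argument is correct and relies on the same three estimates as the paper's proof: the uniform mass bound \eqref{expected_number}, the drift bound coming from \eqref{DG_bound} together with conservation of particles, and the $O(N^{-1})$ quadratic variation from Lemma \ref{lem: martingale bounds}. The organization differs: you establish Skorokhod tightness via Aldous' criterion and then upgrade to the uniform topology using that single jumps of $\langle G,\pi^N_\cdot\rangle$ are $O(N^{-1})$, whereas the paper bypasses Aldous entirely and checks the uniform-topology criterion (cf.\ \cite{KL}, p.~51) directly, bounding the oscillation $\sup_{|t-s|<\gamma}|\langle G,\pi^N_t\rangle - \langle G,\pi^N_s\rangle|$ itself. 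The paper's route is a bit shorter because the drift estimate is already strong enough to control the full modulus of continuity (not merely increments at stopping times), so the detour through Skorokhod tightness and the jump-size observation is unnecessary; your route, on the other hand, makes explicit why limit points are continuous, which is sometimes pedagogically clearer. Incidentally, your jump bound is looser than needed---a single particle moving one step changes $\langle G,\pi^N\rangle$ by at most $\|\partial_x G\|_\infty/N^2$, so the $2\|G\|_\infty/N$ term is superfluous---but this does not affect the argument.
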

\begin{proof}
To deduce that $\{Q^N\}$ is relatively compact with respect to uniform topology, 
we show the following items (cf.\,p.\,51 \cite{KL}).
\begin{enumerate} 
\item For each $t\in[0,T]$, $\epp>0$, there exists a compact set $K_{t,\epp}\subset \Mb$ such that
\begin{equation}\label {eqn1 compactness}
\sup_{N} Q^{N} \left[\pi^N_\cdot: \pi^N_t\notin K_{t,\epp}\right ] \leq \epp.
\end{equation}
\item For every $\epp>0$, 
\begin{equation}\label {eqn2 compactness}
\lim_{\gamma \to 0} 
\limsup_{N\to \infty}
Q^N \Big [  \pi^N_\cdot : \sup_{|t-s|<\gamma} 
d(\pi^N_t,\pi^N_s ) >\epp \Big]
=0.
\end{equation}
 \end{enumerate}

We now consider \eqref{eqn1 compactness}.
Notice that, for any $A>0$, the set $\left\{ \mu\in \Mb: \langle 1,\mu\rangle \leq A  \right\}$ is compact in $\Mb$.
Since the total number of particles is conserved, we have
$
Q^N\big[\langle 1, \pi^N_t\rangle > A\big]
=
Q^N\big[\langle 1, \pi^N_0\rangle > A\big]
\leq
\dfrac1A \E_N \big [ \dfrac1N \sum_{k=1}^N \eta_0(k) \big ]
$.
By \eqref{expected_number}, we have $\E_N \Big [ \dfrac1N \sum_{k=1}^N \eta_0(k) \Big ]\leq C$ for some constant $C<\infty$ independent of $N$ and $A$. Then, the first condition \eqref{eqn1 compactness} is checked by taking $A$ large. 

To show the second condition \eqref {eqn2 compactness}, it is enough to show a counterpart of the condition for the distributions of $\langle G, \pi^N_{\cdot}\rangle$ where $G$ is any smooth test function on $\T$
(cf.\,p.\,54, \cite{KL}).
In other words, we need to show,
 for every $\epp>0$, 
\begin{equation}\label {eqn2 compactness with G}
\lim_{\gamma \to 0} 
\lim_{N\to \infty}
Q^N \Big [  \pi^N_\cdot : \sup_{|t-s|<\gamma} \Big| \langle G,  \pi^N_t \rangle  -\langle G, \pi^N_s \rangle\Big| >\epp \Big]
=0.
\end{equation}
To this end, notice that
$\left\langle G,  \pi^N_t \right\rangle
=
\left\langle G,  \pi^N_0 \right\rangle
+
\int_0^t N^2L \left\langle G,   \pi^N_s \right\rangle ds
+
M^{N,G}_t$,
then we only need to consider the oscillations of $\int_0^t N^2L \left\langle G,   \pi^N_s \right\rangle ds$
and $M^{N,G}_t$ respectively. 

Recall the generator computation \eqref{gen_comp} and the notation $D^{G,s}_{N,k}$ in \eqref{DG}.
As $g$ grows at most linearly and $D^{G,s}_{N,k}$ is bounded (cf.~\eqref{DG_bound}), we have
\begin{equation*}
\begin{split}
&\sup_{|t-s| < \gamma}
 \Big|
 \dsp\int_s^t N^2L \left\langle G,  \pi^N_{\tau} \right\rangle d\tau
     \Big|
 =
 \sup_{|t-s| < \gamma}
 \Big|
 \dsp\int_s^t 
\dfrac1N \sum_{1\leq k \leq N}  D^{G,s}_{N,k} g(\eta_\tau(k)) d\tau
     \Big|\\
 &\leq 
g^*  C_G 
 \sup_{|t-s| < \gamma}
 \dsp\int_s^t 
\Big\{
\dfrac1N \sum_{1\leq k \leq N}
\eta_{\tau}(k)
\Big\}  d\tau = g^* C_G
\gamma 
\dfrac1N \sum_{1\leq k \leq N} \eta_0(k).
\end{split}
\end{equation*}
Recall that $\E_N \Big [ \dfrac1N \sum_{k=1}^N \eta_0(k) \Big ]$ 
is uniformly bounded in $N$.
Then, by Markov inequality, we conclude that
$
Q^N \Big [ 
\sup_{|t-s|<\gamma} 
 \left|
 \int_s^t N^2L \left\langle G,  \pi^N_{\tau} \right\rangle d\tau
     \right|
>\epp \Big]
$
vanishes as $N\uparrow\infty$ and $\gamma\downarrow 0$.

We turn to the martingale $M^{N,G}_t$.
By $\big| M^{N,G}_t - M^{N,G}_s  \big| \leq  \big| M^{N,G}_t \big| + \big| M^{N,G}_s  \big|$, we have
$
\P_N
\big[
\sup_{|t-s|<\gamma} 
\big| 
M^{N,G}_t - M^{N,G}_s  
\big|
>\epp
\big]
\leq
2\P_N
\big[
\sup_{0\leq t \leq T} 
\big| 
M^{N,G}_t  
\big|
>\epp /2
\big]
$.
Using Chebyshev and Doob's inequality, we further bound it by
$$
\dfrac{8}{\epp^2}
\E_N
\Big[
\Big(
\sup_{0\leq t \leq T} 
\big| 
M^{N,G}_t  
\big|
\Big)^2
\Big]
\leq
\dfrac{32}{\epp^2}
\E_N
\Big[
\big(
M^{N,G}_T
\big)^2
\Big]
= 
\dfrac{32}{\epp^2}\E_N
\langle
M^{N,G}
\rangle_T .
$$

By Lemma \ref{lem: martingale bounds}, $\E_N \langle M^{N,G}\rangle_T = O(N^{-1})$.
Then, we conclude
\begin{equation*}
\begin{split}
\lim_{\gamma \to 0}
\lim_{N\to \infty}
\P_N
\Big[
\sup_{|t-s|<\gamma} 
\left| 
M^{N,G}_t - M^{N,G}_s  
\right|
>\epp
\Big]
=0.
\qedhere
\end{split}
\end{equation*}
\end{proof}
%%%%%%%%%%%%%%%%%%%%%%%%%%%
%%%%%%%%%%%%%%%%%%%%%%%%%%%
%%%%%%%%%%%%%%%%%%%%%%%%%%%

\section{Properties of limit measures.}

By Lemma \ref{lem: tightness}, the sequence $\left\{ Q^N\right\}$ is relatively compact with respect to the uniform topology.  Consider any convergent subsequence of $Q^N$ and relabel so that $Q^N \Rightarrow Q$.  
We now consider absolute continuity and an energy estimate for trajectories under $Q$.

\subsection{Absolute continuity}
We now address absolute continuity and conservation of mass propoerties under $Q$.

\begin{Lem}\label{lem: rho < phi c}
$Q$ is supported on absolutely continuous trajectories:
\begin{equation*} 
Q \left[ \pi_\cdot :
\pi_t(dx) = \rho(t,x)dx 
 \text{ for all } 0\leq t\leq T
\right]
=
1.
\end{equation*}
Moreover, for all $0\leq t\leq T$
  we have
$\int_\T \rho(t,x)dx = \int_\T \rho(0,x)dx$.
 \end{Lem}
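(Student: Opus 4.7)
The plan is to establish mass conservation and absolute continuity separately, both leaning on the uniform relative entropy bound $H(\mu^N_t|\Pam_N)\le H(\mu^N|\Pam_N)\le C_0 N$, which follows from Condition \ref{cond: initial measure}(b) together with the monotonicity of relative entropy along the Markov semigroup generated by $N^2 L$.

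For conservation of mass, the zero-range dynamics preserves total particle number pathwise, so $\langle 1,\pi^N_t\rangle=\langle 1,\pi^N_0\rangle$ identically under $Q^N$. Since $\pi\mapsto \langle 1,\pi\rangle$ is weakly continuous on $\Mb$ (test with $G\equiv 1$), and $Q^N\Rightarrow Q$ in the uniform topology on $D([0,T],\Mb)$ by Lemma \ref{lem: tightness}, the identity persists in the limit: $\langle 1,\pi_t\rangle=\langle 1,\pi_0\rangle$ for every $0\le t\le T$, $Q$-a.s. Condition \ref{cond: initial measure}(a) identifies the common value as $\int_\T \rho_0(x)\,dx$, yielding the mass-conservation claim once a density is established.

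For absolute continuity, the key is a bound on how concentrated $\pi^N_t$ can be on small intervals. Applying the entropy inequality to $F_A(\eta):=\sum_{k/N\in A}\eta(k)$, for every $\gamma>0$,
\[
E_N[\pi^N_t(A)]\;\le\;\frac{C_0}{\gamma}\;+\;\frac{1}{\gamma N}\sum_{k/N\in A}\log\frac{Z(e^{\gamma}\phi_{k,N})}{Z(\phi_{k,N})}.
\]
Since $g(k)\ge g_* k$ forces $Z$ to be of exponential type, and $c_0\le\phi_{k,N}\le 1$ uniformly by Lemma \ref{lem: uniform bounds on phi max min}, one has $\log Z(e^\gamma\phi)/Z(\phi)\le C_1 e^\gamma$ uniformly in $\phi\in[c_0,1]$. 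Hence the right-hand side is bounded by $C_0/\gamma+C_1 e^\gamma(|A|+N^{-1})/\gamma$, and optimizing at $\gamma=\log(1/|A|)$ gives
\[
\sup_{N}\,\sup_{0\le t\le T}\, E_N[\pi^N_t(A)]\;\le\;\frac{C_2}{\log(1/|A|)}
\]
for $|A|$ small relative to $N^{-1}$. Portmanteau (applied for fixed $t$ to open $A$) together with outer regularity promotes this to $E_Q[\pi_t(B)]\le C_2/\log(1/|B|)$ for every Borel $B\subset\T$. Consequently, $E_Q[\pi_t(B)]=0$ whenever $|B|=0$, so $\pi_t\ll dx$ $Q$-a.s.\ for each fixed $t$. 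The bound being uniform in $t$, together with the weak continuity $t\mapsto \pi_t$ under $Q$ (inherited from the uniform topology of $Q^N\Rightarrow Q$), transfers the bound along any sequence $t_n\to t$ and yields AC for every $t\in[0,T]$ simultaneously, $Q$-a.s., with density $\rho\in L^1([0,T]\times\T)$ by Fubini combined with the mass bound.

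\emph{Main obstacle.} The delicate part is the entropy estimate: the $\gamma$-dependence of $m(\gamma):=\sup_{\phi\in[c_0,1]}\log Z(e^\gamma\phi)/Z(\phi)\sim e^\gamma$ must be controlled so that the optimization over $\gamma$ actually produces a decaying bound on small intervals. Because $\Pam_N$ is not a product measure with a common marginal, the uniform two-sided fugacity bounds from Lemma \ref{lem: uniform bounds on phi max min} are essential here. A secondary technical point is the upgrade from AC at each fixed $t$ (equivalently, on a countable dense set of $t$'s) to AC at \emph{every} $t$, which relies on the uniform-in-$t$ character of the estimate and the weak continuity of the limit trajectory.
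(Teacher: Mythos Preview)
Your mass-conservation argument is correct and matches the paper's. The absolute-continuity argument, however, has a genuine gap at the sentence ``$E_Q[\pi_t(B)]=0$ whenever $|B|=0$, so $\pi_t\ll dx$ $Q$-a.s.'' This implication fails in general: take $\pi=\delta_X$ with $X$ uniform on $\T$; then $E[\pi(B)]=|B|$ for every Borel $B$, so $E[\pi(B)]=0$ on every null set and certainly $E[\pi(B)]\le C/\log(1/|B|)$, yet $\pi$ is almost surely a Dirac mass, hence purely singular. Your bound only shows that the \emph{averaged} measure $B\mapsto E_Q[\pi_t(B)]$ is absolutely continuous; the null set carrying the singular part of $\pi_t(\omega,\cdot)$ varies with $\omega$ and cannot be excluded by fixing $B$ first. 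The logarithmic rate is also too slow to be upgraded to a pathwise statement via Borel--Cantelli over dyadic intervals. (A minor separate slip: ``for $|A|$ small relative to $N^{-1}$'' should be reversed---you need $|A|$ fixed and $N\to\infty$ so the $N^{-1}$ term is negligible.)

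The paper simply invokes the standard argument of \cite{KL}. That argument proceeds differently: from $H(\mu^N_t|\Pam_N)\le C_0 N$ and the product structure of $\Pam_N$ with fugacities in $[c_0,1]$, the entropy inequality yields a uniform bound $E_{\mu^N_t}\big[N^{-1}\sum_k h(\eta(k))\big]\le C$ for a superlinear $h$ (e.g.\ $h(n)=n\log_+n$). On the event $\{N^{-1}\sum_k h(\eta_t(k))\le K\}$, splitting sites according to $\eta_t(k)\le M$ or $>M$ gives, for \emph{every} Borel $A$ and every $M>1$, the bound $\pi^N_t(A)\le M(|A|+O(N^{-1}))+K\,M/h(M)$. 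The set of $\pi\in\Mb$ satisfying $\pi(A)\le M|A|+K\,M/h(M)$ for all $A$ and all $M$ is closed and contained in $\{\pi\ll dx\}$ (let $M\to\infty$ when $|A|=0$). Markov's inequality on the $h$-functional and portmanteau then give $Q[\pi_t\ll dx]\ge 1-C/K$ for every $K$, hence $=1$. The essential difference from your attempt is that this truncation bound holds for \emph{all} Borel $A$ simultaneously---a de la Vall\'ee--Poussin type uniform integrability---which is precisely what is needed to pass from an averaged to a pathwise statement.
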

 
\begin{proof} A standard proof, namely that of Lemma 1.6, p.~73, \cite{KL}, shows the first statement.  The second follows directly from the weak convergence of $Q^N$ to $Q$ and the conservation of mass $\sum_{x\in \T_N}\eta_t(x) = \sum_{x\in \T_N}\eta_0(x)$.
\end{proof}

%%%%%%%%%%%%%%%%%%%%%%%%%%%
%%%%%%%%%%%%%%%%%%%%%%%%%%%
%%%%%%%%%%%%%%%%%%%%%%%%%%%

\subsection{Energy estimate}
We now state an important `energy estimate' for the paths on which $Q$ is supported.  We follow the framework presented in Section 5.7 of \cite{KL}, however, there are major differences due to the inhomogeneous random environment.  Previous bounds on the random environment developed in Section \ref{subsec: invariant measure} will be useful in the argument.

\begin{Prop} \label {prop: weak derivative of Phi}
$Q$ is supported on paths $\rho(t,x)dx$ with the property that
there exists an $L^1([0,T]\times \T)$ function denoted by $\partial_x \Phi(\rho(s,x))$ such that
\begin{equation*}
\begin{split}
\int_0^T \int_\T 
\partial_x G(s,x) \Phi(\rho(s,x)) dx ds
=
-\int_0^T \int_\T 
G(s,x) \partial_x\Phi(\rho(s,x)) dx ds
\end{split}
\end{equation*}
for all $G$ smooth on $[0,T] \times \T$.
\end{Prop}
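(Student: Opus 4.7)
The plan is to adapt the classical energy-estimate scheme of \cite{KL}[Section 5.7] to the inhomogeneous random environment here, producing for $Q$-almost every path a function $\Psi\in L^2([0,T]\times \T)\subset L^1([0,T]\times\T)$ that realizes the integration-by-parts identity; we then set $\partial_x\Phi(\rho):=\Psi$. The key target is a pathwise $L^2$-duality bound: there is a finite constant $K=K(\omega)$, independent of $\pi$, such that for $Q$-a.e.\ $\pi$ and every smooth $G$ on $[0,T]\times \T$,
$$\left|\int_0^T\int_\T \partial_x G(s,x)\,\Phi(\rho(s,x))\,dx\,ds\right| \,\le\, K\,\|\partial_x G\|_{L^2([0,T]\times\T)}.$$

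To obtain such a bound at the microscopic level, for smooth $G$ I would consider
$$\Lambda^N(G) \;:=\; \E_N\!\left[\int_0^T \frac1N \sum_{k=1}^N (\nabla_N G)(s,k/N)\, g(\eta_s(k))\,ds\right].$$
By the entropy inequality relative to $\Pam_N$, using $H(\mu^N|\Pam_N)\le C_0 N$, and the Feynman--Kac formula,
$$\Lambda^N(G) \,\le\, \frac{C_0}{\gamma} + \frac{1}{\gamma N}\int_0^T \lambda^{N,s}_\gamma(G)\,ds,$$
where $\lambda^{N,s}_\gamma(G)$ is the principal eigenvalue of $N^2 S + \gamma V^G_s$ with $V^G_s(\eta)=\sum_k (\nabla_N G)(s,k/N)\,g(\eta(k))$. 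A discrete summation by parts rewrites $E_{\Pam_N}[V^G_s f]$ as a sum over bonds $(k,k+1)$ of terms essentially of the form $N[G(s,(k+1)/N)-G(s,k/N)]\cdot E_{\Pam_N}[f\{g(\eta(k+1))-g(\eta(k))\}]$. A Cauchy--Schwarz splitting then relates each bond term to the corresponding Dirichlet-form piece $E_{\Pam_N}[g(\eta(k))\mf p^N_{k,+}(\sqrt f(\eta^{k,k+1})-\sqrt f(\eta))^2]$; calibrating the splitting so that the Dirichlet-form part is absorbed into $-N\,E_{\Pam_N}[\sqrt f(-S\sqrt f)]$ and using $g(k)\le g^* k$, the residual becomes a weighted sum of $[G(s,(k+1)/N)-G(s,k/N)]^2$ against the mean occupation. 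Using $\sup_N \E_N[N^{-1}\sum_k \eta_s(k)]<\infty$ from \eqref{expected_number} together with conservation of mass then yields
$$\Lambda^N(G) \,\le\, K_0 + K_1 \int_0^T\!\int_\T (\partial_x G)^2(s,x)\,dx\,ds.$$

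To pass to the limit, Lemmas \ref{lem: 1 block} and \ref{lem: 2 blocks} allow replacement of $g(\eta_s(k))$ by $\Phi(\eta_s^{\varepsilon N}(k))$ in $\Lambda^N(G)$ at vanishing cost; Lemma \ref{lem: rho < phi c} together with a further $\varepsilon\downarrow 0$ step (Lebesgue points) and the uniform-topology convergence $Q^N\Rightarrow Q$ transfer the bound to
$$E_Q\!\left[\int_0^T\int_\T \partial_x G(s,x)\,\Phi(\rho(s,x))\,dx\,ds\right] \le K_0 + K_1 \int_0^T\!\int_\T (\partial_x G)^2\,dx\,ds.$$
To upgrade this expectation bound to a pathwise bound, I would repeat the Feynman--Kac step with $V^G_s$ replaced by $\max_{n\le m}\{V^{G_n}_s - K_1 \|\partial_x G_n\|_{L^2}^2\}$ for a countable dense family $\{G_n\}$ in $C^\infty([0,T]\times\T)$, handling the maximum inside the exponential via a sum to get
$$E_Q\!\left[\max_{n\le m}\left\{\int_0^T\!\int_\T \partial_x G_n\,\Phi(\rho)\,dx\,ds - K_1\|\partial_x G_n\|_{L^2}^2\right\}\right]\le K_0.$$
Sending $m\to\infty$ and using density, $Q$-a.s.\ the duality bound holds for all smooth $G$; Riesz representation on the closure of $\{\partial_x G\}$ in the zero-mean subspace of $L^2([0,T]\times\T)$ produces the desired $\Psi\in L^2\subset L^1$.

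I expect the main obstacle to be the microscopic summation-by-parts in the second step. Because $S$ carries site-dependent bond rates $\mf p^N_{k,\pm}$ built from both the scaled environment variables $q^N_k$ and the inhomogeneous fugacity ratios $\phi_{k\pm 1,N}/\phi_{k,N}$, the usual symmetric-dynamics manipulation relating $f\cdot[g(\eta(k+1))-g(\eta(k))]$ to the Dirichlet form must be rewritten using $\Pam_N$-reversibility and then uniformly controlled. The required uniformity is precisely what Lemmas \ref{lem: qx bound} and \ref{lem: uniform bounds on phi max min} supply: both $\mf p^N_{k,\pm}$ and $\phi_{\max,N}/\phi_{\min,N}$ are bounded by $\omega$-dependent constants that do not grow with $N$. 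Once this quantitative microscopic bound is in place, the remaining ingredients (block replacements, the weak-limit transfer, the $\max$-over-$G_n$ Feynman--Kac argument, and Riesz representation) are essentially routine.
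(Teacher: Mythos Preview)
Your sketch has the right overall architecture (entropy inequality, Feynman--Kac, Cauchy--Schwarz against the Dirichlet form, then a $\max$-over-test-functions trick and Riesz representation), but the core microscopic step contains a genuine gap. After the Cauchy--Schwarz splitting, the residual quadratic term is not ``$\sum_k(\partial_xG)^2$ times the mean occupation under $\mu^N_s$'': inside the variational formula you are taking a supremum over \emph{all} $\Pam_N$-densities $f$, and the residual is of the form $\beta^{-1}\sum_k [G((k{+}1)/N)-G(k/N)]^2\,E_{\Pam_N}[g(\eta(k))f]$. The quantity $E_{\Pam_N}[g(\eta(k))f]$ is unbounded in $f$ (take $f$ concentrated on high-occupancy configurations), and neither the Dirichlet form nor conservation of mass constrains it. Invoking \eqref{expected_number} here is illegitimate, since that bound concerns the true law $\mu^N$, not the arbitrary $f$ appearing in the eigenvalue variational formula. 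Consequently you cannot obtain a path-independent constant $K_1$ multiplying $\|\partial_xG\|_{L^2}^2$, and the claimed unweighted $L^2$-duality bound (hence $\partial_x\Phi(\rho)\in L^2$) does not follow; with only $\rho_0\in L^1(\T)$ one should not expect it.

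The paper's remedy is precisely to \emph{build the residual into the test functional}: one proves instead that
\[
E_Q\Big[\sup_H\Big\{\int\!\!\!\int \partial_xH\,\Phi(\rho) - C_1\!\int\!\!\!\int H^2\,\Phi(\rho) - C_2\!\int\!\!\!\int H\,\Phi(\rho)\Big\}\Big]\le K_0,
\]
so that at the microscopic level the supremum over $f$ is $\le 0$ by construction (the subtracted $g$-weighted terms exactly cancel both the Cauchy--Schwarz residual and the fugacity-gradient contribution coming from $|\phi_{k,N}-\phi_{k+1,N}|\le C/N$). This yields a $\Phi(\rho)$-\emph{weighted} $L^2$ bound, from which $\partial_x\Phi(\rho)\in L^1$ follows by Cauchy--Schwarz against $\Phi(\rho)\le g^*\rho\in L^1$. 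If you want to repair your argument, either (i) introduce the density truncation $\id_{\{\sum_k\eta(k)\le AN\}}$ and subtract the $g$-weighted quadratic (and linear) terms from $V^G_s$ at the outset, as the paper does, or (ii) work throughout in the weighted space $L^2(\Phi(\rho)\,dx\,ds)$ rather than unweighted $L^2$.
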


A main ingredient for the proof of Proposition \ref{prop: weak derivative of Phi} is the following lemma.
For $\epsilon>0$, $\delta>0$, $H(\cdot) \in C^1(\T)$ and $N\in \N$, we define
\begin{equation*}
\begin{split}
&W_N(\epsilon, \delta, H, \eta)
:=
\sum_{1\leq x \leq N} \dfrac{H(x/N)}{\epsilon N}
\left[ \Phi\left(\eta^{\delta N}(x)\right) - \Phi\left(\eta^{\delta N}(x+\epsilon N)\right) \right] \\
&-
\dfrac{4}{c^2 N} 
\sum_{1\leq x \leq N} \dfrac{H^2(x/N)}{\epsilon N} 
\sum_{0 \leq k \leq \epsilon N} \Phi\left(\eta^{\delta N}(x+k)\right)
-
\sum_{1\leq x \leq N} \dfrac{CH(x/N)}{cN} \Phi\left(\eta^{\delta N}(x)\right) .
\end{split}
\end{equation*}
Here, the constants $c$ and $C$, as we recall from Lemma \ref{lem: uniform bounds on phi max min}, come from the inequalities
\[
0<c\leq \min_k \phi_{k,N} \leq \max_k \phi_{k,N}\leq 1, \ \ {\rm and \ \ } \max_k |\phi_{k,N} - \phi_{k+1,N}|\leq \frac{C}{N}.
\]

\begin{Lem} \label {lem: W_N bound}
Let $\{H_j \}_{j\in \N}$ be a dense sequence in $C^{0,1}([0,T]\times \T)$. 
Then, there exists constant $K_0$ such that  for any $m\geq 1$, and $\epsilon >0$, 
\begin{equation*}
\limsup_{\delta\to 0} \limsup_{N\to \infty} 
\E_N 
\Big[
\max_{1\leq j\leq m} 
\Big\{ \int_0^T W_N(\epsilon, \delta, H_j(s,\cdot), \eta_s) ds  \Big\}
\Big]
\leq K_0.
\end{equation*}
\end{Lem}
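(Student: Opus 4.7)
\textbf{Proof plan for Lemma \ref{lem: W_N bound}.} The strategy is to reduce to a variational problem via the entropy inequality and Feynman-Kac, then bound the variational expression through a microscopic integration by parts combined with Cauchy-Schwarz. The three terms of $W_N$ are calibrated so that the Dirichlet-form penalty, the environment correction, and the $L^2$-type penalty from the variational formula all cancel exactly against corresponding pieces arising in the expansion.

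First I would apply the entropy inequality using $H(\mu^N|\Pam_N)\le C_0N$ together with $\exp(\max_j x_j)\le\sum_j\exp(x_j)$ to obtain
\begin{equation*}
\E_N\Big[\max_{j\le m}\int_0^T W_N(H_j)\,ds\Big]\le C_0+\frac{\log m}{N}+\max_{j\le m}\frac{1}{N}\log\E_{\Pam_N}\Big[\exp\Big(N\int_0^T W_N(H_j)\,ds\Big)\Big].
\end{equation*}
Feynman-Kac plus the variational formula for the top eigenvalue of $N^2L+NW_N(H)$ (using that the antisymmetric part of $L$ vanishes on real functions in $L^2(\Pam_N)$) bounds the log-MGF by $\int_0^T\Lambda_N(s,H_j)\,ds$, where
\begin{equation*}
\Lambda_N(s,H):=\sup_{f\text{ density}}\Big\{\int W_N(H(s,\cdot))f\,d\Pam_N-N\,\langle\sqrt f,(-S)\sqrt f\rangle_{\Pam_N}\Big\}.
\end{equation*}
Since $(\log m)/N\to 0$, the task reduces to showing $\Lambda_N(s,H_j)\le K_0$ uniformly in $s,j,N,\delta,\e$.

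Next I would transform the linear-in-$H$ term of $W_N$. Telescoping gives $\Phi(\eta^{\delta N}(x))-\Phi(\eta^{\delta N}(x+\e N))=-\sum_{k=0}^{\e N-1}[\Phi(\eta^{\delta N}(x+k+1))-\Phi(\eta^{\delta N}(x+k))]$; reindexing $y=x+k$ rewrites the first term of $W_N$ as $\sum_y\bar H_y[\Phi(\eta^{\delta N}(y))-\Phi(\eta^{\delta N}(y+1))]$ with $\bar H_y:=(\e N)^{-1}\sum_{k=0}^{\e N-1}H((y-k)/N)$ and $\|\bar H\|_\infty\le\|H\|_\infty$. An equivalence-of-ensembles / one-block style approximation (valid as $N\to\infty$, then $\delta\to 0$) replaces $\Phi(\eta^{\delta N}(y))$ by $(2\delta N+1)^{-1}\sum_{|z-y|\le\delta N}g(\eta(z))$ up to error vanishing in the iterated limit; telescoping once more converts the term into $-\sum_y\bar H_y(2\delta N+1)^{-1}\sum_{z=y-\delta N}^{y+\delta N}[g(\eta(z+1))-g(\eta(z))]$. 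Against $f\,d\Pam_N$, I would use the product-measure identity $\int g(\eta(y))F\,d\Pam_N=\phi_{y,N}\int F(\eta+e_y)d\Pam_N$ to derive
\begin{equation*}
\int[g(\eta(z+1))-g(\eta(z))]f\,d\Pam_N=\int g(\eta(z))[f(\eta^{z,z+1})-f(\eta)]d\Pam_N+\frac{\phi_{z+1,N}-\phi_{z,N}}{\phi_{z+1,N}}\int g(\eta(z+1))f\,d\Pam_N.
\end{equation*}
By Lemma \ref{lem: uniform bounds on phi max min}, $|\phi_{z+1,N}-\phi_{z,N}|/\phi_{z+1,N}\le C/(cN)$, and weighting the correction pieces by $\bar H_y/(2\delta N+1)$ and summing in $y,z$ yields, modulo equivalence-of-ensembles error, precisely the third term $\sum_x (CH(x/N)/cN)\Phi(\eta^{\delta N}(x))$ in $W_N$; thus that term is exactly the environment correction.

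For the remaining Dirichlet-form piece, I would write $f(\eta^{z,z+1})-f(\eta)=[\sqrt f(\eta^{z,z+1})-\sqrt f(\eta)][\sqrt f(\eta^{z,z+1})+\sqrt f(\eta)]$ and apply the weighted Cauchy-Schwarz inequality $2ab\le\alpha a^2+\alpha^{-1}b^2$ with a scale $\alpha$ chosen so that, after summation over $(y,z)$ via Fubini and using the lower bound $\mf p_{z,+}^N\ge c$ together with the identity $\int g(\eta(z))f(\eta^{z,z+1})d\Pam_N=(\phi_{z,N}/\phi_{z+1,N})\int g(\eta(z+1))f\,d\Pam_N$, the $a^2$-sum is bounded by $ND_S(\sqrt f)$ (cancelling the Dirichlet penalty in $\Lambda_N$) while the $b^2$-sum is bounded, through an averaging identity in $z$ over the window $|z-y|\le\delta N$, by the second term $(4/c^2N)\sum_x(H^2/\e N)\sum_{k=0}^{\e N}\Phi(\eta^{\delta N}(x+k))$ of $W_N$. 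Combining these three cancellations gives $\Lambda_N(s,H_j)\le K_0$ uniformly, completing the proof.

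The main obstacle will be the delicate bookkeeping of constants in the Cauchy-Schwarz step: the parameter $\alpha_{y,z}$ must be chosen so that the bond-wise $D_S$-contributions collected across the nested sums in $(y,z)$ total at most $ND_S(\sqrt f)$, while the complementary $L^2$-pieces collapse (via Fubini and the window average $|z-y|\le\delta N$) to at most the $H^2$-penalty of $W_N$. The constants $4/c^2$ and $C/c$ in the definition of $W_N$ are precisely dictated by the bounds $\phi_{k,N}\ge c$ and $|\phi_{k+1,N}-\phi_{k,N}|\le C/N$ from Lemma \ref{lem: uniform bounds on phi max min}, and ensure that these cancellations hold exactly despite the inhomogeneity of the environment.
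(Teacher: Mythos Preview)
Your architecture (entropy inequality, Feynman--Kac, variational formula, then integration by parts plus Cauchy--Schwarz calibrated against the three terms of $W_N$) is the paper's, but there is a genuine gap in the order of operations. You apply entropy/Feynman--Kac to $W_N$ itself and only afterwards invoke an ``equivalence-of-ensembles / one-block style approximation'' to replace $\Phi(\eta^{\delta N}(y))$ by $(2\delta N+1)^{-1}\sum_{|z-y|\le\delta N}g(\eta(z))$ inside $\Lambda_N$. That replacement is not pointwise: for a fixed configuration $\eta$ the difference $\Phi(\eta^{\delta N}(y)) - (2\delta N+1)^{-1}\sum_z g(\eta(z))$ is $O(1)$ in general, and the one-block/Replacement Lemma controls it only under $\E_N[\int_0^T\cdot\,ds]$, i.e.\ by trading it against a piece of the Dirichlet form. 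Since your Cauchy--Schwarz step already consumes the full $N\,E_{\Pam_N}[\sqrt f(-S)\sqrt f]$ to cancel the $H^2$-term of $W_N$, no Dirichlet form remains to absorb this replacement error, and the claimed bound $\Lambda_N(s,H_j)\le K_0$ uniformly over densities $f$ does not follow.

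The paper inverts the order: it first applies the Replacement Lemma at the level of $\E_N$-expectations to pass from $W_N$ to $\tilde W_N$ (each $\Phi(\eta^{\delta N}(\cdot))$ replaced by $g(\eta(\cdot))$), disposing of the $\delta\to 0$ limit before any variational analysis. It then inserts a total-mass cutoff $\id_{\{\sum_k\eta(k)\le AN\}}$ (which you omit; it is needed so the particle-addition identity can be applied cleanly), applies entropy/Feynman--Kac to the truncated $\tilde W_N$, and shows the resulting variational expression is $\le 0$. The paper's integration by parts also differs in form: it uses the identity $E_{\Pam_N}[g(\eta(x))F]=\phi_{x,N}E_{\Pam_N}[F(\eta+\delta_x)]$ once and telescopes $f(\eta+\delta_x)-f(\eta+\delta_{x+\epsilon N})=\sum_k[f(\eta+\delta_{x+k})-f(\eta+\delta_{x+k+1})]$ directly, rather than your two-stage telescope through $g(\eta(z+1))-g(\eta(z))$ and then $f(\eta^{z,z+1})-f(\eta)$. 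After the ordering is corrected, however, your Cauchy--Schwarz bookkeeping with scale $\alpha=cN/2$ and the constants $4/c^2$, $C/c$ matches the paper's choice $\beta=cN/2$ exactly.
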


Before going to the proof of the lemma, we turn to Proposition
\ref{prop: weak derivative of Phi}.

\medskip
\noindent {\it Proof of Proposition \ref{prop: weak derivative of Phi}.}
It follows from Lemma \ref{lem: W_N bound} that 
\begin{equation*}
\begin{split}
&E_Q 
\Big[
\sup_{H\in C^{0,1}([0,T]\times \T)}
\Big\{  \int_0^T  \int_\T \partial_x H(s,x) \Phi(\rho(s,x)) dx  ds\\
&\quad -
 \dfrac{4}{c^2}\int_0^T  \int_\T H^2(s,x)  \Phi(\rho(s,x))    dx  ds
 -
 \dfrac{C}{c}\int_0^T  \int_\T H(s,x)  \Phi(\rho(s,x))    dx  ds
\Big\}
\Big]
\leq K_0,
\end{split}
\end{equation*}
cf.\,p.\,103, Lemma 7.2 in \cite{KL}.
As a result, for $Q$-a.e.\,path $\rho(s,u)du$, there exists $B=B(\rho)$ such that, for all $H\in C^{0,1}([0,T]\times \T)$,
\begin{equation*}
\begin{split}
\int_0^T  \int_\T \partial_x H(s,x) \Phi(\rho(s,x)) dx  ds
-&
 \dfrac{4}{c^2}\int_0^T  \int_\T H^2(s,x)  \Phi(\rho(s,x))    dx  ds\\
 &\quad-
 \dfrac{C}{c}\int_0^T  \int_\T H(s,x)  \Phi(\rho(s,x))    dx  ds
\leq B,
\end{split}
\end{equation*}
Notice that
\begin{equation*}
\begin{split}
\int_0^T  \int_\T H(s,x)  \Phi(\rho(s,x)) dxds
\leq&
\dfrac12\int_0^T  \int_\T H^2 (s,x) \Phi(\rho(s,x))    dx  ds
+
\dfrac12 \int_0^T  \int_\T  \Phi(\rho(s,x))    dx  ds\\
\leq&
\dfrac12\int_0^T  \int_\T H^2 (s,x)  \Phi(\rho(s,x))    dx  ds
+
\dfrac {g^*}{2} \int_0^T  \int_\T  \rho(s,x)   dx  ds\\
=&
\dfrac12\int_0^T  \int_\T H^2 (s,x)  \Phi(\rho(s,x))    dx  ds
+
\dfrac {g^* T }{2}\int_\T \rho(0,x)dx.
\end{split}
\end{equation*}
We obtain
\begin{equation*}
\begin{split}
\int_0^T  \int_\T \partial_x H(s,x) \Phi(\rho(s,x)) dx  ds
-
C^\prime \int_0^T  \int_\T H^2(s,x)  \Phi(\rho(s,x))    dx  ds
\leq
B^\prime.
\end{split}
\end{equation*}
where $C^\prime = \dfrac{4}{c^2} + \dfrac{C}{2c}$ and $B^\prime = \dfrac{Cg^* T\int_\T\rho(0,x)dx}{2c} + B$.
Now, the proof follows exactly from proof of Theorem 7.1, p.\,105, \cite{KL}. \qed

We now return to the proof of Lemma \ref{lem: W_N bound}.

\medskip
\noindent {\it Proof of Lemma \ref{lem: W_N bound}.}
By the replacement lemma (Lemma \ref{replacement_lemma}, and notice that $D_{N,k}^{G,t}$ can be replaced by any bounded function), it suffices to show that there exists constant $K_0$ such that for any $m\geq 1$ and $\epsilon >0$
\begin{equation} \label{tilde W limit}
\limsup_{N\to \infty} 
\E_N 
\Big[
\max_{1\leq j\leq m} 
\Big\{ \int_0^T \tilde W_N(\epsilon,H_j(s,\cdot), \eta_s) ds  \Big\}
\Big]
\leq K_0
\end{equation}
where
\begin{equation*}
\begin{split}
&\tilde W_N(\epsilon, H(\cdot), \eta)
:=
\sum_{1\leq x \leq N} \dfrac{H(x/N)}{\epsilon N}
\left( g(\eta(x)) - g(\eta(x+\epsilon N))   \right) \\
&-
\dfrac{4}{c^2 N} \sum_{1\leq x \leq N} \dfrac{H^2(x/N)}{\epsilon N} \sum_{0\leq k\leq \epsilon N} g(\eta(x+k))
-
\sum_{1\leq x \leq N} \dfrac{CH(x/N)}{cN} g(\eta(x)) .
\end{split}
\end{equation*}

Let $\id_{A,N}(\eta):=\id_{\sum_{k=1}^N \eta(k) \leq AN}$. Define
$W_{A,N}(\epsilon, H(\cdot), \eta) := \tilde W_N(\epsilon, H(\cdot), \eta)  \id_{A,N}(\eta)$.

As in the beginning of proof of the $1$-block estimate, stated in
Lemma \ref{lem: 1 block}, where we cut off high densities, assertion
\eqref{tilde W limit} holds provided we prove that
\begin{equation} \label{tilde W limit, with truncation}
\limsup_{N\to \infty} 
\E_N 
\Big[
\max_{1\leq j\leq m} 
\Big\{ \int_0^T W_{A,N}(\epsilon,H_j(s,\cdot), \eta_s) ds  \Big\}
\Big]
\leq K_0.
\end{equation}
To this end,
by the entropy inequality, the expectation in \eqref{tilde W limit, with truncation} is bounded from above by
\begin{equation*}
\begin{split}
\dfrac1N H(\mu^N| \Pam_{N})
+
\dfrac1N \ln \E_{\Pam_N}
\Big[
\exp
 \Big\{
\max_{1\leq j\leq m} 
\Big\{ N \int_0^T W_{A,N}(\epsilon,H_j(s,\cdot), \eta_s) ds  \Big\}
  \Big\}
\Big].
\end{split}
\end{equation*}
Since the relative entropy $H(\mu^N| \Pam_{N})\leq C_0N$, we obtain the left hand side of  \eqref{tilde W limit, with truncation} is bounded from above by
\begin{equation*}
\begin{split}
C_0
+
\max_{1\leq j\leq m} 
\limsup_{N\to\infty}
\dfrac1N \ln \E_{\Pam_N}
\Big[
\exp
\Big\{
 N \int_0^T W_{A,N}(\epsilon,H_j(s,\cdot), \eta_s) ds
  \Big\}
\Big].
\end{split}
\end{equation*}
By Feynman-Kac formula, for any fixed index $j$, the limsup term in previous expression is bounded from above by
\begin{equation*}
\begin{split}
\limsup_{N\to \infty}
 \int_0^T 
 \sup_f
 \left\{
 E_{\Pam_N} \left[ W_{A,N}(\epsilon, H_j(s,\cdot), \eta) f(\eta) \right]
- N E_{\Pam_N} \Big[\sqrt f (-S\sqrt f)\Big] 
\right\}
ds
\end{split}
\end{equation*}
where the supremum is over all $f$ which are densities with respect to
$\Pam_{N}$. As $c\leq \min_k\phi_{k,N}$ and, by Lemma \ref{lem: qx
  bound}, $q_k^N$ is bounded, the Dirichlet form
$E_{\Pam_N} \Big[\sqrt f (-S\sqrt f)\Big]$ is estimated as
\begin{align} \label {eqn: H_s derivative estimate, iii}
&\sum_{1\leq x \leq N}
E_{\Pam_N}
\Big[
\dfrac12 \Big(  \phi_{x,N}
\big( \dfrac12 + \dfrac{q_x^N}{\sqrt N}\big)
+
\phi_{x+1,N}
\big( \dfrac12 - \dfrac{q_{x+1}^N}{\sqrt N}\big)
\Big)
\big(\sqrt{f(\eta+\delta_{x})} - \sqrt{f(\eta+\delta_{x+1})}\big)^2\Big]\nonumber\\
\geq&
\sum_{1\leq x \leq N}
E_{\Pam_N}
\Big[
\dfrac c 4 
\big(\sqrt{f(\eta+\delta_{x})} - \sqrt{f(\eta+\delta_{x+1})}\big)^2\Big].
\end{align}
Here, we used, for each $x$, 
$E_{\Pam_N} [g(\eta(x)) f(\eta)]
=
E_{\Pam_N} [\phi_{x,N} f(\eta+\delta_x)]$, where $\delta_x$ stands for the configuration with the only particle at $x$; $\eta + \delta_x$ is the configuration obtaining from adding one particle at $x$ to $\eta$.

It now remains to show, for all $H$ in $C^{0,1}([0,T]\times \T)$, that
\begin{equation} \label{eqn: log exp negative}
\begin{split}
 E_{\Pam_N} \left[ W_{A,N}(\epsilon, H(s,\cdot), \eta) f(\eta) \right]
- N E_{\Pam_N} \Big[\sqrt f (-S\sqrt f)\Big] 
\leq 0.
\end{split}
\end{equation}
We first compute that $E_{\Pam_N} \left[ W_{A,N}(\epsilon, H(s,\cdot), \eta) f(\eta) \right]$ equals
\begin{equation} \label {eqn: H_s derivative estimate}
\begin{split}
&
E_{\Pam_N}
\Big[
\sum_{1\leq x \leq N} \dfrac{H(x/N)}{\epsilon N}
( g(\eta(x)) - g(\eta(x+\epsilon N)) \id_{A,N}(\eta) f(\eta)
\Big]\\
&-
\dfrac{4}{c^2 N} 
E_{\Pam_N}
\Big[ \sum_{1\leq x \leq N} \dfrac{H^2(x/N)}{\epsilon N} \sum_{0\leq k\leq \epsilon N} g(\eta(x+k)) \id_{A,N}(\eta)
\Big]\\
&-
E_{\Pam_N}
\Big[ 
\sum_{1\leq x \leq N} \dfrac{CH(x/N)}{cN} g(\eta(x)) \id_{A,N}(\eta)
\Big].
\end{split}
\end{equation}

Notice that
$E_{\Pam_N} [g(\eta(x)) \id_{A,N}(\eta) f(\eta)]$ may be written as
\begin{align*}
E_{\Pam_N} [\phi_{x,N} \id_{A,N}(\eta+\delta_x) f(\eta+\delta_x)]
&=
E_{\Pam_N} [\phi_{x,N} \id_{A-1/N,N}(\eta) f(\eta+\delta_x)].
\end{align*}
 Then, the first expectation in 
  \eqref{eqn: H_s derivative estimate} is written as
\begin{equation} \label {eqn: H_s derivative estimate, rewritten}
\begin{split}
&\sum_{x=1}^N \dfrac{H(x/N)}{\epsilon N}
E_{\Pam_N}
\Big[
 \big(\phi_{x,N} f(\eta+\delta_x) - \phi_{x+\epsilon N,N} f(\eta+\delta_{x+\epsilon N})
\big) \id_{A-1/N,N}(\eta)
\Big]\\
\leq&
\sum_{x=1}^N \dfrac{H(x/N)}{\epsilon N}
E_{\Pam_N}
\Big[
\big(
f(\eta+\delta_x)( \phi_{x,N}  - \phi_{x+\epsilon N,N} )
\big)
\id_{A-1/N,N}(\eta)
\Big]\\
&\quad+
\sum_{x=1}^N \dfrac{H(x/N)}{\epsilon N}
E_{\Pam_N}
\Big[
\big(
\phi_{x+\epsilon N,N} (f(\eta+\delta_x) -  f(\eta+\delta_{x+\epsilon N}) )
\big)
\id_{A-1/N,N}(\eta)
\Big].
\end{split}
\end{equation}
Using that $ 0<c\leq \min_{1\leq k\leq N} \phi_{k,N} \leq \max_k\phi_{k,N} \leq 1$ and 
$ \max_{1\leq k\leq N} |\phi_{k,N} - \phi_{k+1,N}| \leq C N^{-1}$,
the first sum in the right hand side of the inequality \eqref {eqn: H_s derivative estimate, rewritten}
is bounded from above by
\begin{align}  \label{eqn: H_s derivative estimate, i}
&\sum_{x=1}^N \dfrac{H(x/N)}{N}
E_{\Pam_N}
\Big[
f(\eta+\delta_x)\dfrac{|\phi_{x,N}  - \phi_{x+\epsilon N,N} |}{\epsilon}
\id_{A-1/N,N}(\eta)
\Big]\\
\leq&
\sum_{x=1}^N \dfrac{CH(x/N)}{N}
E_{\Pam_N}
\big[
f(\eta+\delta_x) \id_{A-1/N,N}(\eta)
\big]\nonumber\\
=&
\sum_{x=1}^N \dfrac{CH(x/N)}{N\phi_{x,N}}
E_{\Pam_N}
\big[
g(\eta(x))  \id_{A,N}(\eta) f(\eta)
\big]
\ \leq \ 
\sum_{x=1}^N \dfrac{CH(x/N)}{cN}
E_{\Pam_N}
\big[
g(\eta(x)) \id_{A,N}(\eta) f(\eta)
\big]. \nonumber
\end{align}
Now, we proceed to the second sum in the right hand side of  \eqref {eqn: H_s derivative estimate, rewritten}. Using $\displaystyle 0<c\leq \min_k \phi_{k,N} \leq \max_k\phi_{k,N} \leq 1$,
the sum is bounded from above by
\begin{equation*} \label {eqn: H_s derivative estimate, 2nd term}
\begin{split}
\sum_{x=1}^N \dfrac{H(x/N)}{\epsilon N}
E_{\Pam_N}
\Big[
\big(f(\eta+\delta_x) -  f(\eta+\delta_{x+\epsilon N}) \big) \id_{A-1/N,N}(\eta)
\Big]
\end{split}
\end{equation*}
which is rewritten as
\begin{equation} \label{eqn: H_s derivative estimate, 2nd term, rewritten}
\begin{split}
&
E_{\Pam_N}
\Big[
\sum_{x=1}^N 
\sum_{k=0}^{\epsilon N -1}
\dfrac{H(x/N)}{\epsilon N}
\big(f(\eta+\delta_{x+k}) -  f(\eta+\delta_{x+k+1})\big) \id_{A-1/N,N}(\eta)
\Big]\\
=&
E_{\Pam_N}
\Big[
\sum_{x=1}^N 
\sum_{k=0}^{\epsilon N -1}
\dfrac{H(x/N)}{\epsilon N}
\id_{A-1/N,N}(\eta)
\big(\sqrt{f(\eta+\delta_{x+k})} + \sqrt{f(\eta+\delta_{x+k+1})}\big)\\
&\qquad\qquad\qquad\qquad\qquad\qquad\qquad \qquad\times
\big(\sqrt{f(\eta+\delta_{x+k})} - \sqrt{f(\eta+\delta_{x+k+1})}\big)
\Big].
\end{split}
\end{equation}
Using $2ab\leq a^2 + b^2$, for any $\beta>0$, \eqref{eqn: H_s derivative estimate, 2nd term, rewritten} is bounded from above by
\begin{equation} \label{eqn: H_s estim, separated}
\begin{split}
&E_{\Pam_N}
\Big[
\sum_{x=1}^N 
\sum_{k=0}^{\epsilon N -1}
\dfrac{H^2(x/N)}{2\epsilon N \beta}
\id_{A-1/N,N}(\eta)
\big(\sqrt{f(\eta+\delta_{x+k})} + \sqrt{f(\eta+\delta_{x+k+1})}\big)^2
\Big]\\
&\qquad\qquad\qquad \qquad+
E_{\Pam_N}
\Big[
\sum_{x=1}^N 
\sum_{k=0}^{\epsilon N -1}
\dfrac{\beta}{2\epsilon N}
\big(\sqrt{f(\eta+\delta_{x+k})} - \sqrt{f(\eta+\delta_{x+k+1})}\big)^2
\Big].
\end{split}
\end{equation}
The first expectation in \eqref{eqn: H_s estim, separated} is bounded from above by
\begin{equation} \label {eqn: H_s derivative estimate, ii}
\begin{split}
&E_{\Pam_N}
\Big[
\sum_{x=1}^N 
\sum_{k=0}^{\epsilon N -1}
\dfrac{H^2(x/N)}{2\epsilon N \beta}
2 \big(f(\eta+\delta_{x+k})+ f(\eta+\delta_{x+k+1}) \big)
\id_{A-1/N,N}(\eta)
\Big]\\
=&
\sum_{x=1}^N 
\dfrac{H^2(x/N)}{\epsilon N \beta}
\sum_{k=0}^{\epsilon N -1}
E_{\Pam_N}
\Big[\Big(\dfrac{ g(x+k)}{\phi_{x+k,N}} + \dfrac{ g(x+k+1)}{\phi_{x+k+1,N}}  \Big)
\id_{A,N}(\eta)  f(\eta)
\Big]\\
\leq&
\sum_{x=1}^N 
\dfrac{2 H^2(x/N)}{c\epsilon N \beta}
\sum_{k=0}^{\epsilon N}
E_{\Pam_N}
\Big[ g(x+k) \id_{A,N}(\eta) f(\eta) \Big].
\end{split}
\end{equation}
The second expectation in \eqref{eqn: H_s estim, separated} is rewritten and bounded, noting \eqref{eqn: H_s derivative estimate, iii}, as
\begin{equation*} \label {eqn: A_k - A_k+1, rewritten}
\begin{split}
E_{\Pam_N}
\Big[
\sum_{x=1}^N 
\dfrac{\beta}{2}
\big(\sqrt{f(\eta+\delta_{x})} - \sqrt{f(\eta+\delta_{x+1})}\big)^2
\Big] \leq \frac{2\beta}{c} E_{\Pam_N} \Big[\sqrt f (-S\sqrt f)\Big].
\end{split}
\end{equation*}
Now, we set $\beta = cN/2$.
Putting together \eqref {eqn: H_s derivative estimate}, \eqref{eqn: H_s derivative estimate, i}, and \eqref{eqn: H_s derivative estimate, ii}, 
we obtain  \eqref{eqn: log exp negative}. \qed

\section{Uniqueness of Weak Solutions} \label{section uniqueness} 
In this section, we present results on uniqueness of weak solutions to the PDE
\begin{equation} \label{eqn: main PDE general}
\begin{cases}
\partial_t \rho(t,x)
=
\partial_{xx} \Phi(\rho(t,x))
+
\partial_x \left( \alpha(x) \Phi(\rho(t,x))\right), \quad x\in \T, t\geq 0\\
\rho(0,x) = \rho_0(x)
\end{cases}
\end{equation}
where $\Phi(\cdot)\in C^1[0,\infty)$ satisfies $0\leq \Phi'(\cdot)\leq C_g$ (cf.~below \eqref{Phi_eqn});  
$|\alpha(x)|<A$ for some constant $A<\infty$. and $\rho_0$ is nonnegative and belongs to the class
$L^1(\T)$.
\begin{Def}
\label{weak_def}
We say $\rho(t,x): [0,T]\times \T \mapsto [0,\infty)$ is a weak solution to \eqref{eqn: main PDE general} if 
\begin{enumerate}
\item 
$\int_\T \rho(t,x) dx = \int_\T \rho_0(x) dx$ for all $t\in[0,T]$.
\item
$\rho(t,\cdot)$ is weakly continuous, that is, for all $G\in C(\T)$, $\int_\T G(x) \rho(t,x) dx$ is a 
continuous function in $t$.
\item 
There exists an $L^1([0,T]\times \T)$ function denoted by $\partial_x \Phi(\rho(s,x))$ such that
for all $G(s,x) \in C^{0,1} \left( [0,T]\times \T  \right)$
\begin{equation} \label {eqn: weak derivative partial x Phi}
\begin{split}
\int_0^T \int_\T 
\partial_x G(s,x) \Phi(\rho(s,x)) dx ds
=
-\int_0^T \int_\T 
G(s,x) \partial_x\Phi(\rho(s,x)) dx ds.
\end{split}
\end{equation}
\item 
For all $G(s,x) \in C_c^\infty \left( [0,T)\times \T  \right)$
\begin{equation*}
\begin{split}
&\int_0^T \int_\T \partial_sG(s,x) \rho(s,x) dx ds
+ \int_\T G(0,x) \rho_0(x) dx\\
&\qquad\qquad=
\int_0^T \int_\T 
\left[
 -\partial_{xx}G(s,x) \Phi(\rho(s,x)) 
+
\partial_x G(s,x) (\alpha(x) \Phi(\rho(s,x)) )
\right]dx ds.
\end{split}
\end{equation*}
\end{enumerate}
\end{Def}

\begin{Prop}
There exists at most one weak solution to \eqref{eqn: main PDE general}.
\end{Prop}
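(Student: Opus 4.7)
My plan is to use a duality argument based on a regularized backward parabolic dual problem. Let $\rho_1,\rho_2$ be two weak solutions sharing the initial datum $\rho_0$, and set $w := \rho_1 - \rho_2$, $F := \Phi(\rho_1) - \Phi(\rho_2)$. Since $\Phi$ is Lipschitz with $0 \le \Phi' \le C_g$, I define
\[
a(s,x) \;:=\; \frac{F(s,x)}{w(s,x)}\,\mathbf{1}_{\{w(s,x)\neq 0\}},
\]
so that $F = a\, w$ pointwise and $0 \le a \le C_g$. First I would upgrade the weak formulation of Definition \ref{weak_def}(4) to allow terminal-valued test functions, using the weak continuity of $\rho_i$ in time (Definition \ref{weak_def}(2)) together with a standard cutoff-in-time; subtracting the identities for $\rho_1$ and $\rho_2$ then yields, for every $T^* \in (0,T]$ and $G \in C^{1,2}([0,T^*]\times\T)$,
\begin{equation} \label{plan:dual}
\int_\T G(T^*,x)\, w(T^*,x)\, dx \;=\; \int_0^{T^*}\!\!\int_\T w\,[\,\partial_s G + a\,\partial_{xx} G - \alpha\, a\,\partial_x G\,]\, dx\, ds.
\end{equation}

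Next, I fix $\phi \in C^\infty(\T)$ and parameters $\eps,\delta > 0$. Let $a^\eps$, $\alpha^\eps$ be smooth mollifications of $a$ and $\alpha$ preserving their uniform bounds, and set $a^{\eps,\delta} := a^\eps + \delta \in [\delta, C_g + \delta]$. I would then solve the backward, uniformly parabolic Cauchy problem
\[
\partial_s G_{\eps,\delta} + a^{\eps,\delta}\,\partial_{xx} G_{\eps,\delta} - \alpha^\eps a^{\eps,\delta}\,\partial_x G_{\eps,\delta} = 0, \qquad G_{\eps,\delta}(T^*,\cdot) = \phi,
\]
whose classical smooth solution satisfies $\|G_{\eps,\delta}\|_\infty \le \|\phi\|_\infty$ by the maximum principle. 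The key a priori bound, obtained by time-reversing, multiplying by $\partial_{xx} G_{\eps,\delta}$, integrating by parts in $x$, absorbing the drift term via AM-GM applied to $\alpha^\eps a^{\eps,\delta}\,\partial_x G_{\eps,\delta}\,\partial_{xx} G_{\eps,\delta}$, and applying Gr\"onwall, is the $\eps,\delta$-uniform estimate
\[
\sup_{s\in[0,T^*]} \int_\T (\partial_x G_{\eps,\delta})^2\, dx \;+\; \int_0^{T^*}\!\!\int_\T a^{\eps,\delta}\,(\partial_{xx} G_{\eps,\delta})^2\, dx\, ds \;\le\; K(T,A,C_g)\,\|\phi\|_{H^1}^2.
\]

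Substituting $G = G_{\eps,\delta}$ into \eqref{plan:dual} and eliminating $\partial_s G_{\eps,\delta}$ through its PDE gives
\[
\int_\T \phi\, w(T^*)\, dx = \int_0^{T^*}\!\!\int_\T w\,\bigl[(a - a^{\eps,\delta})\partial_{xx} G_{\eps,\delta} + (\alpha^\eps a^{\eps,\delta} - \alpha a)\partial_x G_{\eps,\delta}\bigr]\, dx\, ds.
\]
The first integrand is controlled by Cauchy-Schwarz with weight $\sqrt{a^{\eps,\delta}}$: the weighted factor $\int\!\!\int (a-a^{\eps,\delta})^2/a^{\eps,\delta}\, dx\, ds \le 2\delta T + 2\delta^{-1}\|a-a^\eps\|_{L^2}^2$ is made $\mathcal{O}(\delta)$ by choosing $\eps = \eps(\delta)$ so that $\|a-a^\eps\|_{L^2}^2 \le \delta^2$. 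The second integrand is handled by ordinary Cauchy-Schwarz, since $\alpha^\eps a^{\eps,\delta} \to \alpha a$ in $L^2$ by bounded convergence. Sending $\eps \to 0$ then $\delta \to 0$ gives $\int_\T \phi\, w(T^*,x)\, dx = 0$ for arbitrary $\phi$ and $T^*$, hence $\rho_1 \equiv \rho_2$.

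The argument tacitly requires $w \in L^\infty([0,T]\times\T)$ in order to pull the sup-norm out of the Cauchy-Schwarz bounds; in the setting of the main theorem this follows from $\partial_x \Phi(\rho_i) \in L^1$ (Definition \ref{weak_def}(3)), mass conservation, the one-dimensional Sobolev embedding $W^{1,1}(\T) \hookrightarrow L^\infty(\T)$, and the strict positivity $\Phi' \ge C_1 > 0$ recorded after \eqref{Phi_eqn}. The principal obstacle is the delicate $\eps$--$\delta$ balance in the last step: the degeneracy $a \ge 0$ forces introducing the artificial ellipticity $\delta$ for the dual problem to be well posed, while the energy estimate only controls $\partial_{xx} G_{\eps,\delta}$ in the $a^{\eps,\delta}$-weighted $L^2$ norm, so the mollification error $\|a - a^\eps\|_{L^2}$ must vanish faster than $\sqrt{\delta}$ for the estimate to close.
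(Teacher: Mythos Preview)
Your Holmgren-type duality argument is a genuinely different route from the paper's. The paper passes to the spatial antiderivative $\phi(s,x)=\int_0^x\rho(s,u)\,du-\int_0^1\int_0^y\rho(s,u)\,du\,dy$, shows that $\overline\phi=\phi_1-\phi_2$ has a weak time derivative in $L^1$, derives an $L^2$ energy identity for $\overline\phi$ directly, and closes via Gr\"onwall. The point of integrating once in $x$ is that $\overline\phi\in L^\infty([0,T]\times\T)$ automatically from mass conservation alone, so no integrability beyond Definition~\ref{weak_def} is required, and in particular the lower bound $\Phi'\ge C_1>0$ is never used. Your approach is the classical alternative for degenerate parabolic equations and would transport more readily to higher dimensions, at the cost of having to construct and estimate an auxiliary backward solution.

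There is, however, a real gap in your final paragraph. The chain ``$\partial_x\Phi(\rho_i)\in L^1([0,T]\times\T)$, hence $\Phi(\rho_i(s,\cdot))\in W^{1,1}(\T)\hookrightarrow L^\infty(\T)$ for a.e.\ $s$, hence $\rho_i(s,\cdot)\in L^\infty(\T)$'' is correct, but it only yields that $s\mapsto\|\rho_i(s,\cdot)\|_{L^\infty(\T)}$ lies in $L^1([0,T])$, because the Sobolev bound involves $\|\partial_x\Phi(\rho_i(s,\cdot))\|_{L^1(\T)}$, which is merely integrable in $s$. You therefore do \emph{not} get $w\in L^\infty([0,T]\times\T)$, and since your Cauchy--Schwarz step pulls $\|w\|_\infty$ outside to isolate the factor $\int\!\!\int(a-a^{\eps,\delta})^2/a^{\eps,\delta}$, the argument as written does not close. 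A workable repair is to observe that $\|w(s,\cdot)\|_{L^2(\T)}^2\le\|w(s,\cdot)\|_{L^\infty(\T)}\|w(s,\cdot)\|_{L^1(\T)}\le 2\|\rho_0\|_{L^1}\|w(s,\cdot)\|_{L^\infty(\T)}$, so that $w\in L^2([0,T]\times\T)$; then keep $w$ inside the weighted Cauchy--Schwarz and control $\int\!\!\int w^2(a-a^{\eps,\delta})^2/a^{\eps,\delta}\le 2\delta\|w\|_{L^2}^2+2\delta^{-1}\int\!\!\int w^2(a-a^\eps)^2$, the last integral tending to $0$ by dominated convergence with dominator $4C_g^2\,w^2\in L^1$. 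Note that even this repair still leans on $\Phi'\ge C_1>0$ from the remark after \eqref{Phi_eqn}, which the paper's antiderivative method avoids.
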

\begin{proof}
Since $\partial_x  \Phi(\rho(s,x)) $ exists,
for all $G(s,x) \in C_c^\infty \left( (0,T)\times \T  \right)$, we have
\begin{equation} \label {eqn: weak sln, with partial x Phi}
\begin{split}
\int_0^T \int_\T \partial_sG(s,x) \rho(s,x) dx ds
=\int_0^T \int_\T
 \partial_x G(s,x) 
\left[
\partial_x \Phi(\rho(s,x))  +  \alpha(x) \Phi(\rho(s,x))
\right]dx ds.
\end{split}
\end{equation}
Define $\varphi(s,x) = \int_0^x \rho(s,u) du$.
For any $G(s,x) \in C_c^\infty \left( (0,T)\times \T  \right)$, let
\begin{equation*}
F(s,x) := \int_0^x G(s,x) dx - x \int_\T G(s,x) dx.
\end{equation*}
Note that $F(s,x)$ is also in the space $C_c^\infty \left( (0,T)\times \T  \right)$.
Therefore we may apply \eqref{eqn: weak sln, with partial x Phi} for $F(s,x)$.
The left hand side, after integration by parts, becomes
\begin{equation} \label{eqn: lhs of using F}
\int_0^T \left[ \partial_s F(s,x) \varphi (s,x)\right] \big\vert_0^1 ds
-
\int_0^T \int_0^1 \partial_s \Big(G(s,x) - \int_0^1 G(s,x)dx\Big) \varphi(s,x) dx ds.
\end{equation}
The term $\int_0^T \left[ \partial_s F(s,x) \varphi (s,x)\right] \big\vert_0^1 ds$ vanishes as the total mass $\int_\T \rho(s,x)dx$ is conserved. Then, \eqref{eqn: lhs of using F} can be rewritten as
\[
\begin{split}
&-
\int_0^T \int_0^1 \partial_s \Big(G(s,x) - \int_0^1 G(s,x)dx\Big) \varphi(s,x) dx ds\\
&\qquad\qquad=
-\int_0^T \int_0^1 \partial_s  G(s,x)  \Big( \varphi(s,x) - \int_0^1 \varphi(s,x) dx\Big)  dx ds.
\end{split}
\]
Define $\phi(s,x) = \varphi(s,x) - \int_0^1 \varphi(s,x) dx$. We now have
\begin{equation} \label {eqn: weak derivative, time w/ G(s,x)}
\begin{split}
\int_0^T \int_0^1 \partial_sG(s,x) \phi(s,x) dx ds.
=
-\int_0^T \int_0^1
G(s,x) h(s,x) dx ds
\end{split}
\end{equation}
where
\begin{equation*}
h(s,x)=
\partial_x \Phi(\rho(s,x))
 +  \alpha(x) \Phi(\rho(s,x))  -  \int_0^1 \alpha(x) \Phi(\rho(s,x)) dx.
\end{equation*}
By straightforward approximation, we obtain from \eqref {eqn: weak derivative, time w/ G(s,x)}, for any $G(\cdot)\in C_c^\infty(0,T)$ and $q(\cdot)\in L^\infty [0,1]$,
\begin{equation} \label {eqn: weak derivative, time}
\begin{split}
\int_0^T  \partial_sG(s) \Big[ \int_0^1 q(x) \phi(s,x) dx\Big] ds.
=
-\int_0^T G(s) \Big[ \int_0^1
 q(x) h(s,x) dx \Big] ds
\end{split}
\end{equation}
As $\phi(s,\cdot)$ and $h(s,\cdot)$ are both in $L^1 \big([0,T]; L^1(\T)\big)$, 
\eqref{eqn: weak derivative, time} implies that 
$\dfrac{d}{ds} \phi(s,\cdot) $, the weak derivative of $\phi(s,\cdot)$,
exists and $\dfrac{d}{ds} \phi(s,\cdot) = h(s,\cdot)$.
Moreover,  in terms of the Bochner integral (cf. \cite{Evans}[p. 302]),
\begin{equation} \label{eqn: C1 phi}
\phi(t,\cdot) = \int_0^t \dfrac{d}{ds} \phi(s,\cdot) ds + \phi(0,\cdot).
\end{equation}

Now, assume there are two solutions $\rho_1,\rho_2$ and therefore
$\phi_1,\phi_2$.  If we show that $\phi_1=\phi_2$, then it follows
$\varphi_1(s,x) - \varphi_2(s,x) = \int_0^1\varphi_1(s,u)
-\varphi_2(s,u)du$ for all $s,x$.  By conservation of mass, it holds
$\varphi_1(s,1) - \varphi_2(s,1)=0$ for all $s$. Then, we conclude
$\varphi_1=\varphi_2$, hence, $\rho_1=\rho_2$ a.e..

 To this end, let $\overline \phi = \phi_1 - \phi_2$
  and $\overline \Phi_{s,x} = \Phi(\rho_1(s,x)) - \Phi(\rho_2(s,x))$.
Therefore, by Lemma \ref{lem: energy equality}, we obtain
\begin{equation}
\label{help1}
\begin{split}
&\dfrac12 \int_0^1 \left( \overline \phi(t,x) \right)^2 dx 
-
\dfrac12 \int_0^1  \left( \overline \phi(0,x) \right)^2 dx \\
=&
\int_0^t \int_0^1
\overline\phi (s,x)
\left[
\partial_x \overline \Phi_{s,x}  +  \alpha(x) \overline \Phi_{s,x} 
\right]dx ds
-
\int_0^t \int_0^1
\overline\phi (s,x)
\left[
\int_0^1 \alpha(u) \overline\Phi_{s,u} du
\right]dx ds.
\end{split}
\end{equation}
Notice, from Lemma \ref{lem: phi partial Phi integrability}, that
\begin{equation*}
\int_0^t \int_0^1
\overline\phi (s,x)
\partial_x \overline \Phi_{s,x}  dx ds
=
-\int_0^t \int_0^1
\partial_x \overline\phi (s,x)
\overline \Phi_{s,x}  dx ds
=
-\int_0^t \int_0^1
 (\partial_x \overline \phi(s,x))^2
 \Phi^\prime_{s,x}  dx ds.
\end{equation*}
Here, we have applied the mean value theorem so that $\overline \Phi_{s,x}=\Phi^\prime_{s,x} \partial_x\overline \phi(s,x)$.

Let $A$ be such that $ |\alpha(x)| \leq A <\infty$. Note that $\Phi(\cdot)$ is an increasing function and $0<\Phi'(\cdot)\leq C_g$.  We have,
\begin{align*}
&\int_0^t \int_0^1
\overline\phi (s,x)
 \alpha(x) \overline \Phi_{s,x} 
dx ds
\ \leq \ 
\int_0^t \int_0^1
A
\left|\overline\phi (s,x)  \right|
\left|\partial_x \overline\phi (s,x) \right|
\Phi^\prime_{s,x} 
dx ds\\
\leq&
\int_0^t \int_0^1
\Big[
\dfrac {A^2}{2} \left( \overline \phi(s,x) \right)^2 \Phi^\prime_{s,x} 
+
\dfrac12
\left(\partial_x \overline\phi (s,x) \right)^2
\Phi^\prime_{s,x} 
\Big] dx ds\\
\leq&
\dfrac {A^2 C_g}2
\int_0^t \int_0^1
\left( \overline \phi(s,x) \right)^2  dx ds
+
\dfrac 12
\int_0^t \int_0^1
\left(\partial_x \overline\phi (s,x) \right)^2
\Phi^\prime_{s,x}  dx ds.
\end{align*}
We also have 
\begin{equation*}
\begin{split}
&- \int_0^t \int_0^1
\overline\phi (s,x)
\left[
\int_0^1 \alpha(u) \overline\Phi_{s,u} du
\right]dx ds\\
\leq&
A\sqrt{C_g} \int_0^t 
\left(\int_0^1
\left|\overline\phi (s,x)\right| dx\right)
 \cdot 
\left(\int_0^1
\left| \sqrt{\Phi^\prime_{s,x}} \partial_x \overline\phi (s,x) 
 \right| dx\right)
  ds\\
\leq&
\dfrac{A^2 C_g}{2}
 \int_0^t 
\left(\int_0^1
\left|\overline\phi (s,x)\right| dx\right)^2 ds
+
\dfrac12
 \int_0^t
\left(\int_0^1
\left|  \sqrt{\Phi^\prime_{s,x}}  \partial_x \overline\phi (s,x) 
 \right| dx\right)^2
ds\\
\leq&
\dfrac{A^2 C_g}{2}
 \int_0^t  \int_0^1
\left( \overline\phi (s,x) \right)^2 dx ds
+
\dfrac12
 \int_0^t
 \int_0^1
\Phi^\prime_{s,x} 
\left(  \partial_x \overline\phi (s,x)   \right)^2 
dx ds.
\end{split}
\end{equation*}
Putting together the above, from equation \eqref{help1}, we get
\begin{equation*}
\int_0^1\left( \overline\phi(t,x)\right)^2 dx 
-
\int_0^1\left( \overline\phi(0,x)\right)^2 dx
\leq
2A^2 C_g
 \int_0^t  \int_0^1
\left( \overline\phi (s,x) \right)^2 dx ds.
\end{equation*}
Notice that  $\overline\phi(0,x) = 0$.
The desired result now follows from Gronwall's inequality.
\end{proof}

\begin{Lem} \label {lem: energy equality}
Let $\overline h(s,\cdot) := \dfrac{d}{ds} \overline \phi(s,\cdot)$. 
We have
\begin{equation*}
\begin{split}
\dfrac12 \int_0^1 \left( \overline \phi(t,x) \right)^2 dx 
-
\dfrac12 \int_0^1  \left( \overline \phi(0,x) \right)^2 dx
=
\int_0^t \int_0^1
\overline\phi (s,x) \overline h(s,x)
dx ds.
\end{split}
\end{equation*}
\end{Lem}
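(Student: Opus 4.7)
My plan is to mollify $\overline{\phi}$ in time by Steklov averages, apply the elementary pointwise chain rule to the smoothed function, and pass to the limit. Two preliminary observations frame the argument: (i) $\overline{\phi}$ lies in $L^\infty([0,T]\times\T)$, since each $\varphi_i(s,x)=\int_0^x \rho_i(s,u)\,du$ is uniformly bounded by the conserved total mass and hence so is $\phi_i=\varphi_i-\int_0^1\varphi_i$; and (ii) $\overline{h}\in L^1([0,T]\times\T)$, from Definition \ref{weak_def}(3), the bound $|\Phi(\rho)|\le C_g\rho$, and $\rho\in L^1$. Combined with \eqref{eqn: C1 phi}, which yields $\overline{\phi}\in C([0,T];L^1(\T))$, the $L^\infty$ bound plus dominated convergence upgrades this to $\overline{\phi}\in C([0,T];L^2(\T))$.

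For $h>0$ and $s\in[0,T-h]$, I set
\begin{equation*}
\overline{\phi}^h(s,x):=\frac{1}{h}\int_s^{s+h}\overline{\phi}(\tau,x)\,d\tau,\qquad
\overline{h}^h(s,x):=\frac{1}{h}\int_s^{s+h}\overline{h}(\tau,x)\,d\tau.
\end{equation*}
Reading \eqref{eqn: C1 phi} coordinatewise in $x$ shows that, for a.e.\ $x$, the scalar map $s\mapsto \overline{\phi}^h(s,x)$ is absolutely continuous on $[0,T-h]$ with derivative $\overline{h}^h(s,x)$. The pointwise chain rule gives $\partial_s\tfrac12(\overline{\phi}^h(s,x))^2=\overline{\phi}^h(s,x)\,\overline{h}^h(s,x)$, and since $\overline{\phi}^h$ is uniformly bounded while $\overline{h}^h\in L^1([0,t]\times\T)$ (by Fubini applied to the averaging), the product is in $L^1([0,t]\times\T)$. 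Integrating in $s$ and then in $x$ (Fubini) produces
\begin{equation*}
\tfrac12\|\overline{\phi}^h(t)\|_{L^2}^2-\tfrac12\|\overline{\phi}^h(0)\|_{L^2}^2
=\int_0^t\!\!\int_0^1 \overline{\phi}^h(s,x)\,\overline{h}^h(s,x)\,dx\,ds
\end{equation*}
for every $t\in[0,T-h]$.

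The final step is to pass $h\downarrow 0$. The left-hand side converges to the desired expression by the $L^2$-continuity of $\overline{\phi}$ just noted. For the right-hand side, Lebesgue differentiation gives $\overline{\phi}^h\overline{h}^h\to\overline{\phi}\,\overline{h}$ pointwise a.e. The only genuine obstacle is the mixed $L^\infty$--$L^1$ regularity: there is no fixed $L^1$ dominant, so dominated convergence is unavailable. I instead invoke Vitali's convergence theorem, using $|\overline{\phi}^h\overline{h}^h|\le\|\overline{\phi}\|_\infty|\overline{h}^h|$ together with the fact that $\{\overline{h}^h\}_h$ is uniformly integrable on $[0,t]\times\T$ (being $L^1$-convergent); uniform integrability transfers to the product sequence and yields $L^1$ convergence. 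This proves the identity for $t\in[0,T)$, and both sides being continuous in $t$ (left-hand side by $L^2$-continuity, right-hand side since $\overline{\phi}\,\overline{h}\in L^1$) extends it to $t=T$.
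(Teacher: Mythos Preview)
Your proof is correct, but it takes a different route from the paper's. The paper telescopes directly: partition $[0,t]$ into $n$ pieces, write $(\overline\phi(t_{k+1}))^2-(\overline\phi(t_k))^2$ via the factorisation $a^2-b^2=(a+b)(a-b)$ together with the pointwise identity $\overline\phi(t_{k+1},x)-\overline\phi(t_k,x)=\int_{t_k}^{t_{k+1}}\overline h(s,x)\,ds$ coming from \eqref{eqn: C1 phi}, and then pass to the limit in $\int_0^t\!\int_0^1 \overline\phi_n\,\overline h\,dx\,ds$ with $\overline\phi_n$ the piecewise-constant average $\tfrac12(\overline\phi(t_{k+1})+\overline\phi(t_k))$. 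Because only the bounded factor $\overline\phi_n$ is approximated while $\overline h$ stays fixed, ordinary dominated convergence with dominant $\|\overline\phi\|_\infty|\overline h|$ suffices---no uniform integrability argument is needed. Your Steklov-average approach mollifies both factors, which forces the Vitali step (or a product splitting $\overline\phi^h\overline h^h-\overline\phi\,\overline h=\overline\phi^h(\overline h^h-\overline h)+(\overline\phi^h-\overline\phi)\overline h$, after which DCT would again work). The trade-off: your argument is closer to the standard PDE-textbook treatment of energy identities and makes the $C([0,T];L^2)$ regularity explicit, while the paper's telescoping is slightly shorter and avoids any appeal to uniform integrability.
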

\begin{proof}
For each $n\in \N$, let $t_{k,n} = \dfrac{kt}{n}$, $k=0,1,\ldots,n-1$.
Then
\begin{equation*}
\begin{split}
& \dfrac12 \int_0^1 \Big[\left( \overline \phi(t,x) \right)^2 
 -
 \left( \overline \phi(0,x) \right)^2 \Big]dx
\ = \
  \dfrac12\int_0^1 \sum_{k=0}^{n-1}
 \Big[\left( \overline \phi(t_{k+1,n},x) \right)^2 
 -
 \left( \overline \phi(t_{k,n},x) \right)^2 \Big]dx\\
 =&
  \int_0^1 \sum_{k=0}^{n-1}
  \Big[
   \dfrac{\overline \phi(t_{k+1,n},x) + \overline \phi(t_{k,n},x) }2
 \int_{t_k}^{t_{k+1}} \overline h(s,x)
  ds\Big]
  dx\\
  =&
 \sum_{k=0}^{n-1} \int_{t_k}^{t_{k+1}}
    \int_0^1
   \dfrac{\overline \phi(t_{k+1,n},x) + \overline \phi(t_{k,n},x) }2
\overline h(s,x)
dx  ds
\end{split}
\end{equation*}
Define $\overline \phi_n (s,x) := \dfrac{\overline \phi(t_{k+1,n},x) + \overline \phi(t_{k,n},x) }2$ if $s\in [t_{k,n},t_{k+1,n})$. Then, by weak continuity of $\rho(t,x)$, we have that $ \overline\phi_n (s,x) $ converges a.e.\,to $\overline \phi(s,x)$ on $[0,T]\times \T$. By dominated convergence, noting that $\overline h(s,x)
$ belongs to $L^1([0,T], L^1(\T))$,  
we have, as $n\uparrow\infty$,
\begin{equation} \label {eqn: energy equality, limit form}
\begin{split}
 \dfrac12 \int_0^1 \Big[\left( \overline \phi(t,x) \right)^2 
 -
 \left( \overline \phi(0,x) \right)^2 \Big]dx
 =&
 \int_0^t \int_0^1 \overline \phi_n (s,x) 
\overline h(s,x)
 dx  ds\\
  \to&
   \int_0^t \int_0^1 \overline \phi (s,x) 
\overline h(s,x)
 dx  ds.
 \end{split}
\end{equation}
Since the left hand side of \eqref{eqn: energy equality, limit form} is independent of $n$, the lemma is proved.
\end{proof}

\begin{Lem} \label {lem: phi partial Phi integrability}
We have
$ \dsp
\int_0^t \int_0^1
\overline\phi (s,x)
\partial_x \overline \Phi_{s,x}  dx ds
=
-\int_0^t \int_0^1
\partial_x \overline\phi (s,x)
\overline \Phi_{s,x}  dx ds.
$
\end{Lem}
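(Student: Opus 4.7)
The identity is a spatial integration by parts on the torus, and the plan is to reduce to the classical case via mollification. First, I would assemble the regularity that is already at hand. From $\overline{\phi} = (\varphi_1 - \varphi_2) - \int_0^1(\varphi_1 - \varphi_2)(s,u)\,du$ with $\varphi_i(s,x) = \int_0^x \rho_i(s,u)\,du$, one reads off $\partial_x \overline{\phi} = \overline{\rho} := \rho_1 - \rho_2$ in the distributional sense. Conservation of mass then gives $\overline{\phi}\in L^\infty([0,T]\times\T)$ and $\overline{\rho}\in L^\infty([0,T];L^1(\T))$. By Definition~\ref{weak_def}(3) and Fubini, $\overline{\Phi}(s,\cdot)\in W^{1,1}(\T)$ for a.e.\ $s$, and the one-dimensional Sobolev embedding $W^{1,1}(\T)\hookrightarrow C(\T)$ furnishes
\[
\|\overline{\Phi}(s,\cdot)\|_\infty \;\le\; C\bigl(\|\overline{\Phi}(s,\cdot)\|_{L^1(\T)} + \|\partial_x\overline{\Phi}(s,\cdot)\|_{L^1(\T)}\bigr),
\]
so $\overline{\Phi}\in L^1([0,T];L^\infty(\T))$.

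Next, let $\eta_\epsilon$ be a standard periodic mollifier in $x$ and set $\overline{\Phi}^\epsilon(s,x):=(\overline{\Phi}(s,\cdot)*\eta_\epsilon)(x)$, so that $\overline{\Phi}^\epsilon(s,\cdot)$ is smooth on $\T$, $\partial_x\overline{\Phi}^\epsilon=(\partial_x\overline{\Phi})*\eta_\epsilon$, and $\|\overline{\Phi}^\epsilon(s,\cdot)\|_\infty\le \|\overline{\Phi}(s,\cdot)\|_\infty$. Since $\overline{\phi}(s,\cdot)$ is absolutely continuous on the torus, classical integration by parts (with no boundary contributions on $\T$) yields, for a.e.\ $s\in[0,t]$,
\[
\int_\T \overline{\phi}(s,x)\,\partial_x\overline{\Phi}^\epsilon(s,x)\,dx \;=\; -\int_\T \partial_x\overline{\phi}(s,x)\,\overline{\Phi}^\epsilon(s,x)\,dx,
\]
and integrating over $s\in[0,t]$ gives the corresponding identity for the $(s,x)$ double integrals.

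Finally, I would pass to the limit $\epsilon\downarrow 0$. For the left-hand side, $\partial_x\overline{\Phi}^\epsilon\to\partial_x\overline{\Phi}$ in $L^1([0,T]\times\T)$ by standard mollifier theory, and since $\overline{\phi}\in L^\infty$ dominated convergence gives convergence to $\int_0^t\int_\T \overline{\phi}\,\partial_x\overline{\Phi}\,dx\,ds$. For the right-hand side, continuity of $\overline{\Phi}(s,\cdot)$ (for a.e.\ $s$) yields $\overline{\Phi}^\epsilon(s,\cdot)\to\overline{\Phi}(s,\cdot)$ uniformly on $\T$, hence $(s,x)$-a.e.; the pointwise bound $|\overline{\rho}(s,x)\,\overline{\Phi}^\epsilon(s,x)|\le |\overline{\rho}(s,x)|\,\|\overline{\Phi}(s,\cdot)\|_\infty$ has integrable envelope
\[
\int_0^t \|\overline{\rho}(s,\cdot)\|_{L^1(\T)}\,\|\overline{\Phi}(s,\cdot)\|_\infty\,ds \;<\;\infty,
\]
so a second application of dominated convergence identifies the limit as $\int_0^t\int_\T \overline{\rho}\,\overline{\Phi}\,dx\,ds$. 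The main (minor) obstacle is the right-hand side limit: the $L^1$-bound on $\partial_x\overline{\Phi}$ afforded by the weak-solution definition is not directly enough to dominate $\overline{\rho}\cdot\overline{\Phi}^\epsilon$, and it is precisely the Sobolev embedding $W^{1,1}(\T)\hookrightarrow C(\T)$ that promotes it to the required $L^1_s L^\infty_x$ envelope on $\overline{\Phi}$ itself.
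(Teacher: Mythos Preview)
Your proof is correct, and it takes a genuinely different route from the paper's.

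The paper does not mollify $\overline\Phi$ at all; instead it first enlarges the admissible test-function class in \eqref{eqn: weak derivative partial x Phi} (by mollifying candidate test functions in both $s$ and $x$) and then plugs in a truncated antiderivative of $\partial_x\overline\phi$, namely
\[
F(s,x)\;=\;\Big[\int_0^x (\partial_x\overline\phi)_{A,s,u}\,du\;-\;x\int_0^1 (\partial_x\overline\phi)_{A,s,u}\,du\Big]\id_{[0,t]}(s),
\]
passing to the limit $A\to\infty$ afterwards. The key difference is how each argument justifies the limit on the right-hand side. You upgrade $\overline\Phi$ from $L^1_{t,x}$ to $L^1_t L^\infty_x$ via the Sobolev embedding $W^{1,1}(\T)\hookrightarrow C(\T)$ (which is exactly what the energy assumption in Definition~\ref{weak_def}(3) buys in one dimension) and then run dominated convergence against $\|\overline\rho(s,\cdot)\|_{L^1}\,\|\overline\Phi(s,\cdot)\|_\infty$. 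The paper instead exploits the sign structure $\overline\Phi_{s,x}=\Phi'_{s,x}\,\partial_x\overline\phi(s,x)$ with $\Phi'\ge 0$, so that $(\partial_x\overline\phi)_{A}\,\overline\Phi$ is nonnegative and monotone in $A$, and concludes by \emph{monotone} convergence. Your argument is shorter and does not use the monotonicity of $\Phi$; the paper's avoids Sobolev embeddings and, as a byproduct of the monotone convergence, directly certifies finiteness of $\int_0^t\!\int_\T (\partial_x\overline\phi)^2\,\Phi'_{s,x}\,dx\,ds$, which is used immediately afterwards in the uniqueness estimate.
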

\begin{proof}
We first extend the class of test functions in
\eqref{eqn: weak derivative partial x Phi}. 
Assume that $F: [0,T] \times [0,1]\mapsto \R$ satisfies the following:
(1) $F$ is measurable; (2) for any fixed $s$, $F(s,\cdot)$ is absolutely continuous; (3) there is a constant $C<\infty$ such that $| \partial_x F(s,x) | \leq C$ for almost all $s,x$; (4) $\int_0^1 \partial_x F(s,x)dx = 0$ for all $s$.

Let $\tau_\epsilon(x)$ be the standard mollifier supported on $[-\epsilon, \epsilon]$.  Define 
\begin{align*}
F_\epsilon(s,x) =\int_0^T \int_\T F(s,u) \tau_\epsilon(x-u) \tau_\epsilon(s-q) dudq
\end{align*}
 with $F$ extended to be $0$ for $s\notin [0,T]$.
By \eqref{eqn: weak derivative partial x Phi},
\[
\int_0^T \int_\T 
F_\epsilon(s,x) \partial_x \overline\Phi_{s,x}   dx ds
=
-
\int_0^T \int_\T 
\partial_x F_\epsilon(s,x)  \overline\Phi_{s,x}  dx ds .
\]
Taking $\epsilon \to 0$, as $\partial_x F$ (and therefore $F$) is bounded and $\overline \Phi_{s,x}$ is integrable, dominated convergence gives
\begin{equation} \label{ean: partial x Phi, extended F}
\int_0^T \int_\T 
F(s,x) \partial_x \overline\Phi_{s,x} dx ds
=
-
\int_0^T \int_\T 
\partial_x F(s,x) \overline\Phi_{s,x} dx ds.
\end{equation}

We now extend the admissible test functions further from $F$ to $\overline \phi$ as claimed in the lemma.
Introduce a truncation on $\partial_x \overline \phi$:
\begin{equation*}
(\partial_x \overline \phi)_{A,s,u} 
=
\begin{cases}
\partial_x \overline \phi (s,u) &-A \leq \partial_x \overline \phi (s,u)\leq A \\
A &\text{otherwise }.
\end{cases}
\end{equation*}
Apply \eqref{ean: partial x Phi, extended F} with 
$F(s,x) =\big[ \int_0^x (\partial_x \overline \phi)_{A ,s,u} du - x \int_0^1 (\partial_x \overline \phi)_{A ,s,u} du\big] \id_{[0,t]}(s)$ to get
\begin{equation*}
\begin{split}
&\int_0^t \int_0^1
\Big[
\int_0^x (\partial_x \overline \phi)_{A,s,u}  du
-
x \int_0^1 (\partial_x \overline \phi)_{A,s,u}  du
\Big]
\partial_x \overline \Phi_{s,x}  dx ds\\
=&
-\int_0^t \int_0^1
\Big[
(\partial_x \overline \phi)_{A,s,x} 
-
\int_0^1 (\partial_x \overline \phi)_{A,s,u} du
\Big]
\overline \Phi_{s,x}  dx ds.
\end{split}
\end{equation*}
As $| (\partial_x \overline \phi)_{A,s,x} | \leq | \partial_x \overline \phi(s,x)|
= |\rho_1(s,u) - \rho_2(s,u)|$, by conservation of mass, and that $\partial_x \overline \Phi_{s,x}$ is integrable, we have by dominated convergence that
\begin{equation*}
\begin{split}
&\lim_{A\to \infty} \int_0^t \int_0^1
\Big[
\int_0^x (\partial_x \overline \phi)_{A,s,u}  du
-
x \int_0^1 (\partial_x \overline \phi)_{A,s,u}  du
\Big]
\partial_x \overline \Phi_{s,x}  dx ds\\
=&
\int_0^t \int_0^1
\Big[
\int_0^x \partial_x \overline \phi (s,u) du
-
x \int_0^1 \partial_x \overline \phi (s,u) du
\Big]
\partial_x \overline \Phi_{s,x}  dx ds
=
\int_0^t \int_0^1 \overline \phi (s,x) \partial_x \overline \Phi_{s,x}  dx ds.
\end{split}
\end{equation*}
Here, we used $ \int_0^1 \partial_x \overline \phi (s,u) du = \int_0^1 (\rho_1(s,u) - \rho_2(s,u))du = 0$.
Similarly,
\begin{equation*}
\begin{split}
\lim_{A\to \infty} \int_0^t \int_0^1
\Big[
\int_0^1 (\partial_x \overline \phi)_{A,s,u} du
\Big]
 \overline \Phi_{s,x}  dx ds
=
0.
\end{split}
\end{equation*}
Finally, notice that 
$(\partial_x \overline \phi)_{A,s,x} \overline \Phi_{s,x}
=(\partial_x \overline \phi)_{A,s,x} \partial_x\overline\phi(s,x) \Phi^\prime_{s,x}$
increases in $A$ since $\Phi'_{s,x}\geq 0$ (cf. \eqref{Phi_eqn}). By monotone convergence, we have
\begin{equation*}
\begin{split}
\lim_{A\to \infty} \int_0^t \int_0^1
(\partial_x \overline \phi)_{A,s,x} 
 \overline \Phi_{s,x}  dx ds
=
 \int_0^t \int_0^1
\partial_x \overline \phi (s,x)
 \overline \Phi_{s,x}  dx ds,
\end{split}
\end{equation*}
finishing the proof.
\end{proof}

\medskip
\noindent{\bf Acknowledgements.} C.L.~has been partly supported by
FAPERJ CNE E-26/201.207/2014, by CNPq Bolsa de Produtividade em
Pesquisa PQ 303538/2014-7, by ANR-15-CE40-0020-01 LSD of the French
National Research Agency.  S.S.~was partly supported by
ARO-W911NF-18-1-0311 and a Simons Foundations Sabbatical grant.

%%%%%%%%%%%%%%%%%%%%%%%%%%%%%%%%%%%%%%%%%%%%
%%%%%%%%%%%%%%%%%%%%%%%%%%%%%%%%%%%%%%%%%%%%
%%%%%%%%%%%%%%%%%%%%%%%%%%%%%%%%%%%%%%%%%%%%

\end{document}